\def \qed {\hfill \vrule height6pt width 6pt depth 0pt}
\def\textmatrix#1&#2\\#3&#4\\{\bigl({#1 \atop #3}\ {#2 \atop #4}\bigr)}
\def\dispmatrix#1&#2\\#3&#4\\{\left({#1 \atop #3}\ {#2 \atop #4}\right)}
\newcommand{\beg}{\begin{equation}}
	\newcommand{\eeg}{\end{equation}}
\newcommand{\ben}{\begin{eqnarray*}}
	\newcommand{\een}{\end{eqnarray*}}
\newtheorem{thm}{Theorem}[section]
\newtheorem{cor}[thm]{Corollary}
\newtheorem{lem}[thm]{Lemma}
\newtheorem{prop}[thm]{Proposition}
\numberwithin{equation}{section} \theoremstyle{definition}
\newtheorem{defn}[thm]{Definition}
\newtheorem{eg}[thm]{Example}
\newcommand{\C}{\mathbb{C}}
\newcommand{\A}{\mathbb A}
\newcommand{\B}{\mathbb{B}}
\newcommand{\CA}{\overline{\mathbb A}}
\newcommand{\D}{\mathbb{D}}
\newcommand{\T}{\mathbb{T}}
\newcommand{\N}{\mathbb{N}}
\newcommand{\Z}{\mathbb{Z}}
\newcommand{\KS}{\mathcal{K}}
\newcommand{\HS}{\mathcal{H}}
\newcommand{\BC}{\overline{\mathbb{B}}_2}
\newcommand{\DC}{\overline{\mathbb{D}}}
\newcommand{\UT}{\underline{T}}
\newcommand{\la}{\left \langle}
\newcommand{\ra}{\right \rangle}
\def\textmatrix#1&#2\\#3&#4\\{\bigl({#1 \atop #3}\ {#2 \atop #4}\bigr)}
\def\dispmatrix#1&#2\\#3&#4\\{\left({#1 \atop #3}\ {#2 \atop #4}\right)}
\begin{document}

\title[Dilation on $\mathbb A_r$ and von Neumann's inequality on certain varieties in $\mathbb{B}_2$]{Dilation on an annulus and von Neumann's inequality on certain varieties in the biball}

\author[Pal and Tomar]{SOURAV PAL AND NITIN TOMAR}

\address[Sourav Pal]{Mathematics Department, Indian Institute of Technology Bombay,
	Powai, Mumbai - 400076, India.} \email{sourav@math.iitb.ac.in, souravmaths@gmail.com}

\address[Nitin Tomar]{Mathematics Department, Indian Institute of Technology Bombay, Powai, Mumbai-400076, India.} \email{tnitin@math.iitb.ac.in, tomarnitin414@gmail.com}		

\keywords{$\A_r$-contraction, Rational dilation, Spectral set, von Neumann set, Principal variety}	

\subjclass[2020]{47A20, 47A25, 32A60}	

%\thanks{The first named author is supported by the Seed Grant of IIT Bombay, the CDPA and the MATRICS Award (Award No. MTR/2019/001010) of Science and Engineering Research Board (SERB), India. The second named author is supported by the Prime Minister's Research Fellowship (PMRF ID 1300140), Government of India.}	

\begin{abstract}
A Hilbert space operator $T$ is said to be an \textit{$\A_r$-contraction} if  the closure of the annulus 
\[
\A_r=\{z \in \mathbb C \ : \ r<|z|<1\}  \qquad (0<r<1)	
	\]
		is a spectral set for $T$. An $\A_r$-unitary is a normal operator with spectrum inside the boundary $\partial \A_r$. A celebrated theorem due to Agler states that every $\A_r$-contraction dilates to an $\A_r$-unitary. We give a short and new proof to Agler's theorem by an application of a result due to Dritschel, Jury and McCullough. Let us consider the algebraic variety $Z(q)$ generated by the polynomial $q(z_1, z_2)=z_1z_2-r\slash (1+r^2)$ and its intersection with the biball $\mathbb{B}_2=\{(z_1, z_2) : |z_1|^2+|z_2|^2<1\}$. We prove that an invertible operator $T$ is an $\A_r$-contraction if and only if $Z(q) \cap \overline{\mathbb{B}}_2$ is a spectral set for the operator pair $\kappa(T)$ if and only if $Z(q) \cap \overline{\mathbb{B}}_2$ is a complete spectral set for the operator pair $\kappa(T)$, where 
\[
\kappa(T)=\left(\frac{T}{\sqrt{1+r^2}}, \frac{rT^{-1}}{\sqrt{1+r^2}} \right).
\]		
As an application of this result, we prove independently a few Wold-type decomposition results for $\A_r$-contractions and $\A_r$-isometries. We have from the literature that $\mathcal A_r \subsetneq C_{\alpha} \subsetneq C_{1,r}$, where $\mathcal A_r$ is the set of all $\mathbb A_r$-contractions and 
\begin{align*}
C_\alpha &=\{T: \ \text{$T$ is invertible and} \ \alpha(T^*, T)=-T^{*2}T^2+(1+r^2)T^*T-r^2I \geq 0\},\\
C_{1, r} & =\{T: \ T \text{ is invertible and } \|T\|, \|rT^{-1}\| \leq 1\}.
\end{align*}
We show that $T \in C_\alpha$ if and only if $\kappa(T)$ is a spherical contraction. We determine minimal spectral sets for $C_{\alpha}, \, C_{1,r}$ and a few more associated classes. Then, we find minimal von Neumann sets for analogous classes of operator pairs induced by the map $\kappa$ in terms of the variety $Z(q)$.
	\end{abstract} 
	
	\maketitle
	
	%\tableofcontents
			
	\section{Introduction}\label{Intro}
	
	%\vspace{0.2cm}
	
	\noindent 	Throughout the paper, all operators are bounded linear maps acting on complex Hilbert spaces. For a Hilbert space $\HS$, $\mathcal{B}(\HS)$ is the algebra of operators on $\HS$. We denote by $\C, \D$ and $\T$ the complex plane, the unit disk and the unit circle in the complex plane respectively, with center at the origin. A contraction is an operator with norm at most $1$.
	
\smallskip	
	
	Given a positive real number $K$, a compact set $X \subset \C^n$ is said to be a $K$-\textit{spectral set} for a tuple of commuting operators $\underline T =(T_1, \dots, T_n)$ on  a Hilbert space $\HS$ if the Taylor joint spectrum $\sigma_T(\underline T) \subseteq X$ and von Neumann's inequality
	\begin{equation} \label{eqn:new-001}
	\|f(\underline{T})\| \leq K \ \|f\|_{\infty, X}=K\sup\{|f(\xi)| \ : \ \xi \in X  \}
	\end{equation}
holds for all $f \in Rat(X)$. Here $Rat(X)$ is the algebra of rational functions $f=p \slash q$, where $p, q$ are holomorphic polynomials in $n$ complex variables $z_1, \dots , z_n$ with $q$ having no zeros in $X$. Also, $f(\underline T)=p(\underline T)q(\underline T)^{-1}$, where $q(\underline T)$ is invertible as $q$ has no zeros in $X$. Again, $X$ is said to be a \textit{complete $K$-spectral set} for $\underline T$ if $\sigma_T(\underline T) \subseteq X$ and for every matricial rational function $f=[f_{ij}]_{i, j=1}^m$ with each $f_{ij} \in Rat(X)$,
\begin{equation} \label{eqn:new-002}
	\|f(T_1, \dotsc, T_n)\|=\|[f_{ij}(T_1, \dotsc, T_n)]_{i, j=1}^m \| \leq K \ \|f\|_{\infty,\, X}=K \ \sup\{\|[f_{ij}(\xi)]\| \ : \ \xi \in X  \}.
	\end{equation}
	Also, $X$ is said to be a \textit{spectral set} or \textit{complete spectral set} for $\underline T$ if (\ref{eqn:new-001}) or (\ref{eqn:new-002}) respectively, is satisfied when $K=1$. Moreover, $X$ is said to be a \textit{minimal spectral set} (or a \textit{complete minimal spectral set}) for a set of commuting $n$-tuples of operators $\mathcal T$ if $X$ is a spectral set (or complete spectral set) for every member of $\mathcal T$ and for any proper compact subset $X'$ of $X$, there is $\underline T \in \mathcal T$ such that $X'$ is not a spectral set (or complete spectral set) for $\underline T$. In 1951, von Neumann introduced the notion of spectral set in \cite{v-N} where he profoundly established the fact that an operator is a contraction if and only if the closed unit disk $\overline{\D}$ is a spectral set for it.
	
	\smallskip 
	
	A commuting $n$-tuple of operators $\underline{T}$ having $X$ as a spectral set is said to have a \textit{rational dilation} or a \textit{normal $bX$-dilation} if there exist a Hilbert space $\mathcal{K},$ an isometry $V: \mathcal{H} \to \mathcal{K}$ and a commuting $n$-tuple of normal operators $\underline{N}=(N_1, \dots , N_n)$ on 
	$\mathcal{K}$ with $\sigma_T(\underline{N})$ lying in the distinguished boundary $bX$ of $X$ such that 
	\begin{equation}\label{eq:Rat}
		f(\underline{T})=V^*f(\underline{N})V
	\end{equation}
	for all $f \in Rat(X)$. In other words, $f(\underline{T})=P_\mathcal{H}f(\underline{N})|_\mathcal{H}$ for every $f \in Rat(X)$ when $\mathcal{H}$ is considered a closed subspace of $\mathcal{K}$ and $P_\HS$ is the orthogonal projection of $\mathcal{K}$ onto the space $\mathcal{H}$. The dilation is said to be \textit{minimal} if 
	\[
	\mathcal{K}=\overline{\mbox{span}}\{f(\underline{N})h \ :\ h \in \mathcal{H} \ \& \ f \in Rat(X)\}.
	\]
	Rational dilation is said to succeed on a compact set $X\subset \C^n$ if every commuting $n$-tuple of operators having $X$ as a spectral set admits a normal $bX$-dilation. In 1953, Sz. Nagy \cite{B-Nagy} established the success of rational dilation on the closed unit disk $\overline{\D}$ by dilating every contraction to a unitary. The success or failure of rational dilation on an annulus was unknown for a few decades. In 1985, Agler settled the rational dilation problem for an annulus affirmatively in his seminal paper \cite{Agler}.
	 
\begin{thm}[Agler, \cite{Agler}]\label{thm_Agler}
An operator $T$ having the closed annulus $\overline{\A}_r$ as a spectral set, where
\[
\mathbb{A}_r=\{z \in \C : r<|z|<1\} \qquad (0<r<1),
\]  
dilates to a normal operator $N$ with spectrum $\sigma(N)$ lying on the boundary of $\mathbb A_r$.
\end{thm}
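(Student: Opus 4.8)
The plan is to bypass Agler's original kernel computation and instead obtain the dilation from Arveson's dilation theory driven by an abstract realization theorem for test functions. First I would record the standard reduction of Arveson and Stinespring: an operator $T$ with $\overline{\A}_r$ as a spectral set admits a normal $\partial\A_r$-dilation if and only if the unital homomorphism $\rho : Rat(\overline{\A}_r) \to \mathcal{B}(\HS)$, $\rho(f)=f(T)$, is completely contractive. Complete contractivity lets one extend $\rho$ to a unital completely positive map on $C(\partial\A_r)$ by Arveson's extension theorem, and Stinespring's theorem then produces a $*$-representation on a larger space $\KS$ that compresses back to $\rho$; the image of the coordinate function is a normal operator $N$ whose spectrum is forced to lie in $\partial\A_r$. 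Hence the whole problem reduces to promoting the scalar von Neumann inequality in the hypothesis to its matricial form, that is, to showing that $\overline{\A}_r$ is automatically a complete spectral set for $T$ once it is a spectral set.

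To effect this promotion I would bring in the collection of test functions naturally attached to the doubly connected domain $\A_r$. Since the fundamental group of $\A_r$ is $\Z$ with character group the circle $\T$, the function theory of $\A_r$ is governed by a one-parameter family of distinguished inner functions $\Psi=\{\psi_t : t \in \T\}$ that serves as a family of test functions. The engine of the argument is the Dritschel--Jury--McCullough realization theorem: for such a collection of test functions, any operator that is contractive against every member of $\Psi$ carries a unitary-colligation (Agler-type) realization, so that the induced representation on the algebra generated by $\Psi$ is completely contractive and dilates to a $\Psi$-unitary. For the annulus the $\Psi$-unitaries are exactly the normal operators with spectrum in $\partial\A_r$, so feeding $\Psi$ into this theorem is designed to yield precisely the dilation demanded in the statement.

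The step I expect to be the main obstacle is the point where the double connectivity of $\A_r$ is indispensable, namely showing that contractivity against the test functions forces the full von Neumann inequality over all of $Rat(\overline{\A}_r)$, equivalently that the Agler norm attached to $\Psi$ agrees with the supremum norm on $\overline{\A}_r$. The implication from spectral set to $\Psi$-contraction is soft, since each $\psi_t$ lies in the closed unit ball of $Rat(\overline{\A}_r)$ and the spectral-set hypothesis at once gives $\|\psi_t(T)\| \le 1$ for all $t$. The difficulty is the converse passage, and it is exactly here that the annulus separates from the triply connected domains on which rational dilation is known to fail: I would need the family $\{\psi_t\}$ to be rich enough to generate a dense subalgebra of the closure of $Rat(\overline{\A}_r)$ and to recover its norm, so that the Dritschel--Jury--McCullough realization gives a dilation valid for every $f \in Rat(\overline{\A}_r)$ and not merely on the subalgebra generated by the test functions. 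Verifying this structural completeness of the annulus test functions is the crux; granting it, the normal $\partial\A_r$-dilation, and thus Agler's theorem, drops out at once from the realization theorem together with the Arveson--Stinespring reduction, giving the short proof promised in the introduction.
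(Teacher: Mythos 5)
Your opening reduction is the same as the paper's: by Arveson's theorem, producing a normal $\partial\A_r$-dilation is equivalent to showing that $\overline{\A}_r$ is a \emph{complete} spectral set for $T$, so the whole problem is to promote the scalar von Neumann inequality to its matricial form. After that the two arguments diverge, and your route has a genuine gap at exactly the point where all of the difficulty lives. You propose to obtain complete contractivity from the Dritschel--Jury--McCullough realization machinery for the family of annulus test functions $\Psi=\{\psi_t : t\in\T\}$, and you yourself flag the crux: one must show that contractivity against $\Psi$ forces the full matricial von Neumann inequality over $Rat(\overline{\A}_r)$, i.e.\ that the Agler norm attached to $\Psi$ coincides with the supremum norm on $\overline{\A}_r$. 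But this ``structural completeness'' of the test functions is not a verification one can defer --- in the test-function framework it \emph{is} the content of Agler's theorem (it is precisely what the Dritschel--Jury--McCullough analysis of the annulus establishes, via their cone-separation and duality arguments). Granting it and then concluding is therefore essentially assuming the theorem. As written, the proposal reduces Agler's theorem to an unproved statement of comparable depth rather than to something already available.

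There is a second, independent omission. Even if you import the realization theorem as a black box, the citable operator-theoretic form of the Dritschel--Jury--McCullough result (their Theorem 6.5, quoted as Theorem \ref{Annulus dilationII} in the paper) only applies when $\sigma(T)$ lies in the \emph{open} annulus $\A_r$, and your argument never confronts the case $\sigma(T)\cap\partial\A_r\neq\emptyset$. Closing that gap is the actual contribution of the paper's proof: it rescales by $\phi(z)=z/(1+\epsilon)$ so that $J=\phi(T)$ becomes an $\A_s$-contraction with $\sigma(J)\subseteq\A_s$ for $s=(r-\epsilon)/(1+\epsilon)$, applies Theorem \ref{Annulus dilationII} and Arveson's theorem to get the matricial inequality over the enlarged annulus $\overline{\A}_{r-\epsilon}^{1+\epsilon}$, and then lets $\epsilon\to 0$ through a nested-compact-sets argument (Lemma \ref{thm_nested}) to recover $\overline{\A}_r$ as a complete spectral set. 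Your proposal contains no analogue of either the spectral-shrinking step or the limiting step, so even with the realization theorem in hand it would only reprove the special case already in the literature. To repair the argument along your lines you would need either a version of the realization theorem valid for spectra touching $\partial\A_r$ (which is not the standard statement) or an approximation scheme of the kind the paper carries out.
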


Apart from Agler's work \cite{Agler}, there is a vast literature on operator and function theoretic aspects of an annulus, e.g., see \cite{SarasonII, SarasonI, McCullough, Dritschel, Tsikalas, TsikalasII, TsikalasIII, Bello, Pas-McCull, PalI, PalII} and the references therein.

\smallskip

 In this article, we continue further the study of an operator having the closed annulus $\overline{\A}_r$ as a spectral set. Such an operator is called an $\A_r$-\textit{contraction}. Unitaries, isometries and pure isometries are special classes of contractions. There are analogous classes in the literature for $\A_r$-contractions. A normal operator that has its spectrum on the boundary $\partial \A_r$ is called an $\A_r$-\textit{unitary}. An $\A_r$-\textit{isometry} is an operator that has its spectrum inside $\CA_r$ and can be extended to an $\A_r$-unitary. A \textit{pure $\A_r$-isometry} is an $\A_r$-isometry whose restriction to any nonzero reducing subspace is not an $\A_r$-unitary. Thus, Agler's famous result (i.e., Theorem \ref{thm_Agler}) states that every $\A_r$-contraction dilates to an $\A_r$-unitary. An alternative proof to this theorem was given by Dritschel, Jury and McCullough in \cite{Dritschel} for the following special case.
	
\begin{thm}[\cite{Dritschel}, Theorem 6.5]\label{Annulus dilationII}
		An $\A_r$-contraction $T$ with $\sigma(T) \subseteq \A_r$ admits a normal $ b\CA_r$-dilation.		
	\end{thm}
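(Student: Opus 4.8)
The plan is to deduce the dilation from Arveson's theorem on complete spectral sets, thereby reducing the whole problem to a single assertion: that $\overline{\A}_r$ is not merely a spectral set but a \emph{complete} spectral set for $T$. The point is that the Shilov boundary of the algebra $Rat(\overline{\A}_r)$ is precisely the topological boundary $\partial\A_r$, the union of the two circles $\{|z|=r\}$ and $\{|z|=1\}$. Hence, once the representation $f\mapsto f(T)$ on $Rat(\overline{\A}_r)$ is shown to be completely contractive, Arveson's dilation theorem produces a Hilbert space $\KS\supseteq\HS$ and a normal operator $N$ on $\KS$ with $\sigma(N)\subseteq\partial\A_r$ such that $f(T)=P_\HS f(N)|_\HS$ for every $f\in Rat(\overline{\A}_r)$; this is exactly a normal $b\overline{\A}_r$-dilation. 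The hypothesis $\sigma(T)\subseteq\A_r$ enters to make the scheme legitimate: $T$ is invertible, $f(T)$ is defined for every $f\in Rat(\overline{\A}_r)$ by the Riesz--Dunford calculus on a neighbourhood of $\sigma(T)$, and the same calculus applies to the (not necessarily rational) auxiliary functions used below, since all of them are holomorphic on a neighbourhood of the compact set $\sigma(T)\subset\A_r$.

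The heart of the matter is upgrading contractivity to complete contractivity, and here I would follow the test-function method of Dritschel--Jury--McCullough. First I would fix a collection $\Psi$ of test functions for $\A_r$, that is, a family of scalar functions, each bounded by $1$ on $\A_r$ and unimodular on $\partial\A_r$, whose associated realization theory recovers the closed unit ball of $Rat(\overline{\A}_r)$; for the doubly connected annulus such a family is genuinely needed, and this is the structural feature separating it from the disk. Second, via Agler's characterisation of spectral sets in terms of test functions, applied through the hereditary functional calculus of $T$, I would extract an Agler-type decomposition: for each contractive $f$ in the algebra there are positive kernels $\Gamma_\psi$ with
\[
1-\overline{f(w)}\,f(z)=\sum_{\psi\in\Psi}\bigl(1-\overline{\psi(w)}\,\psi(z)\bigr)\,\Gamma_\psi(w,z),
\]
the invertibility and spectral localisation of $T$ guaranteeing that every term is meaningful when evaluated on $T$. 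Third, the lurking-isometry construction of the realization theorem converts this decomposition into an isometry $V\colon\HS\to\KS$ together with a $\ast$-representation, which assemble into the normal operator $N$; since the test functions are unimodular on $\partial\A_r$, the spectrum of $N$ is forced onto $\partial\A_r$.

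I expect the main obstacle to be exactly the second step: producing the test-function decomposition, equivalently establishing that the contractive functional calculus is completely contractive. This is where the topology of the annulus is felt, for in a simply connected domain complete contractivity would follow at once from contractivity against a single defining function, whereas for $\A_r$ one must invoke the full family $\Psi$ and the nontrivial realization theorem. A secondary technical point, which I would not gloss over, is verifying that the normal operator manufactured by the lurking-isometry argument has spectrum \emph{on} the two circles rather than in the interior of $\A_r$; this is ensured by the unimodularity of the test functions on $\partial\A_r$ together with the minimality of the dilation.
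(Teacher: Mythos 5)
You should first be aware that the paper does not prove this statement: it is imported verbatim as Theorem 6.5 of Dritschel--Jury--McCullough \cite{Dritschel} and serves as the black-box starting point for the paper's new proof of Agler's theorem, so there is no internal proof to compare against. Your outline is a faithful summary of the strategy of the cited proof --- test functions for the annulus, an Agler-type decomposition, the lurking-isometry realization, and Arveson's theorem to convert complete contractivity of $f\mapsto f(T)$ on $Rat(\CA_r)$ into a normal $b\CA_r$-dilation --- and the reduction of the whole problem to showing that $\CA_r$ is a \emph{complete} spectral set is exactly right.

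As a proof, however, the proposal has genuine gaps, and it misplaces the role of the hypothesis. First, everything of substance is deferred: the existence of a family $\Psi$ of test functions whose associated norm recovers the supremum norm on $\A_r$, and the passage from ``$\CA_r$ is a spectral set for $T$'' to the Agler decomposition $1-\overline{f(w)}f(z)=\sum_\psi\bigl(1-\overline{\psi(w)}\psi(z)\bigr)\Gamma_\psi(w,z)$, are precisely the content of the theorem, and neither is argued; the latter is not a formal consequence of ``Agler's characterisation'' but requires showing that a certain cone of hereditary functions is closed and then running a Hahn--Banach separation together with a GNS construction. Second, and more importantly, you claim the hypothesis $\sigma(T)\subseteq\A_r$ enters only to legitimise the Riesz--Dunford calculus. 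That cannot be correct: every $\A_r$-contraction has $\sigma(T)\subseteq\CA_r$ and is automatically invertible (since $0\notin\CA_r$), so $f(T)$ is already defined for all $f\in Rat(\CA_r)$ without the stronger hypothesis. The compact containment $\sigma(T)\subseteq\A_r$ is used in \cite{Dritschel} in the cone-closedness step, via a compactness/normal-families argument. If it were needed only for the functional calculus, Theorem 6.5 would already cover all $\A_r$-contractions, and the entire point of Section 2 of this paper --- removing that hypothesis by exhausting $\CA_r=\bigcap_n\CA_{r-1\slash n}^{1+1\slash n}$ and invoking the nested-intersection lemma --- would be vacuous.
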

Evidently, there is a gap between Theorem \ref{thm_Agler} and Theorem \ref{Annulus dilationII}. Indeed, Theorem \ref{thm_Agler} dilates every $\A_r$-contraction to an $\A_r$-unitary, whereas Theorem \ref{Annulus dilationII} guarantees $\A_r$-unitary dilation for an $\A_r$-contraction that has its spectrum inside the open annulus $\A_r$. In this article, we fill up this gap by an application of Theorem \ref{Annulus dilationII} and give a new proof to Theorem \ref{thm_Agler} in Section \ref{sec_Agler}. This is the first main result of this article.	
	 
	 \smallskip 
	
	In Section \ref{sec_var}, we consider the complex algebraic variety generated by the polynomial
	$
	q(z_1, z_2)=z_1z_2-\frac{r}{1+r^2}
	$
	and its intersection with the closure of the Euclidean biball $\mathbb{B}_2 = \{(z_1, z_2) \in \C^2 : |z_1|^2 + |z_2|^2 < 1\}$. We denote by $Z(q)$ the set of zeros of $q(z_1,z_2)$ and call the set $Z(q) \cap\overline{\B}_2$ the \textit{principal variety}. In the same section we prove that an invertible operator $T$ is an $\A_r$-contraction if and only if $Z(q) \cap\overline{\B}_2$ is a spectral set for the pair of operators
	\begin{equation}\label{eqn_k}
	\kappa(T)=\left(\frac{T}{\sqrt{1+r^2}}, \frac{rT^{-1}}{\sqrt{1+r^2}} \right).
	\end{equation}
The principal variety $Z(q) \cap\overline{\B}_2$ being a spectral set is easier to deal with than an object like $\overline{\A}_r$. The reason is that $Z(q) \cap\overline{\B}_2$ is polynomially convex but $\overline{\A}_r$ is not and Oka-Weil theorem (see CH-7 of \cite{Oka-Weil}) provides the freedom of testing von Neumann's inequality (as in (\ref{eqn:new-001}) \& (\ref{eqn:new-002})) in terms of polynomials only without stepping into the rational algebra for $Z(q) \cap\overline{\B}_2$, which is not possible in case of $\overline{\A}_r$. Note that the set $Z(q)\cap \overline{\mathbb B}_2$ appeared in \cite{Tsikalas} with a different purpose. Here we consider $Z(q)\cap \overline{\mathbb B}_2$ as a spectral set. In Section \ref{sec_var}, we show interplay between $\A_r$-contractions and the induced operator pairs $\kappa(T)$ using operator theory of the biball.

\smallskip

There are several wider classes of operators associated with an annulus other than the $\A_r$-contractions. For example, Bello and Yakubovich \cite{Bello} studied the following two classes of operators each of which is bigger than the class of $\A_r$-contractions:
	\[
	C_\alpha=\{ T: T \ \text{is invertible and} \ \alpha(T^*, T) \geq 0\} \ \ \& \ \ C_{1,r}=\{ T: T \ \text{is invertible and} \ \|T\|, \|rT^{-1}\|\leq 1\},
	\] 
	where $\alpha(T^*, T):=-T^{*2}T^2+(1+r^2)T^*T-r^2I$. On the other hand, McCullough and Pascoe \cite{Pas-McCull} explored important classes such as quantum annulus. The following theorem was proved in \cite{Bello}.
	
	\begin{thm} \label{thm_Ar_Ca}
	Every $\A_r$-contraction is in $C_\alpha$ and $C_\alpha \subsetneq C_{1,r}$.
	\end{thm}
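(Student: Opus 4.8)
The plan is to reduce every assertion to one scalar-friendly reformulation of the condition $\alpha(T^*,T)\ge 0$. First I would record the elementary but decisive observation that, for an invertible $T$, the map $h\mapsto Th$ is a bijection of $\HS$, and under the substitution $h=T^{-1}k$ the quadratic form $\langle \alpha(T^*,T)h,h\rangle=-\|T^2h\|^2+(1+r^2)\|Th\|^2-r^2\|h\|^2$ turns into $(1+r^2)\|k\|^2-\|Tk\|^2-r^2\|T^{-1}k\|^2$. Hence
\[
\alpha(T^*,T)\ge 0 \quad\Longleftrightarrow\quad T^*T+r^2(T^{-1})^*T^{-1}\le (1+r^2)I ,
\]
that is, $T\in C_\alpha$ is exactly the statement that the column $\binom{T}{\,rT^{-1}}$ has norm at most $\sqrt{1+r^2}$. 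This ``spherical'' reformulation is the hinge of the whole argument.

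To prove that an $\A_r$-contraction $T$ lies in $C_\alpha$, note first that $0\notin \sigma(T)\subseteq \overline{\A}_r$, so $T$ is invertible and $r/z\in Rat(\overline{\A}_r)$ is admissible. By Agler's Theorem~\ref{thm_Agler}, $T$ dilates to an $\A_r$-unitary $N$, i.e.\ $f(T)=P_\HS f(N)|_\HS$ for every $f\in Rat(\overline{\A}_r)$, with $\sigma(N)\subseteq \partial\A_r$. Applying this to $g_1(z)=z$ and $g_2(z)=r/z$ and using that $g_1(N),g_2(N)$ are commuting normals, I would compute, for $h\in\HS$,
\[
\|Th\|^2+r^2\|T^{-1}h\|^2=\|P_\HS g_1(N)h\|^2+\|P_\HS g_2(N)h\|^2\le \big\langle (|g_1|^2+|g_2|^2)(N)h,h\big\rangle .
\]
Since $|z|^2+r^2/|z|^2\equiv 1+r^2$ on $\partial\A_r$, the spectral theorem gives $(|g_1|^2+|g_2|^2)(N)=(1+r^2)I$, so the right-hand side equals $(1+r^2)\|h\|^2$. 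This is precisely the spherical inequality, and by the reformulation $T\in C_\alpha$. The essential point is that the sharp constant $\sqrt{1+r^2}$ is unattainable from the scalar von Neumann inequality alone (which only yields $\sqrt{2}$); the normal $\partial\A_r$-dilation is what supplies the required column contractivity.

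For the inclusion $C_\alpha\subseteq C_{1,r}$, let $T\in C_\alpha$ and pass to the spherical form $M:=T^*T+r^2(T^{-1})^*T^{-1}\le (1+r^2)I$. To bound $\|T\|$, take unit vectors $h_n$ with $\|Th_n\|\to\|T\|$; since $\|T^{-1}h_n\|\ge \|h_n\|/\|T\|$, evaluating $M$ on $h_n$ and letting $n\to\infty$ yields $\|T\|^2+r^2/\|T\|^2\le 1+r^2$, i.e.\ $(\|T\|^2-1)(\|T\|^2-r^2)\le 0$, whence $\|T\|\le 1$. Next I would observe that a direct computation shows $M$ is unchanged when $T$ is replaced by $rT^{-1}$ (the two summands simply interchange up to the $r^2$ factor); applying the previous bound to $rT^{-1}$ then gives $\|rT^{-1}\|\le 1$. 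Hence $T\in C_{1,r}$, and $C_\alpha\subseteq C_{1,r}$.

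Finally, to see that the inclusion is strict I would use the identity
\[
\alpha(T^*,T)=(I-T^*T)(T^*T-r^2I)+T^*(TT^*-T^*T)T ,
\]
valid for any $T$, in which the two factors of the first term commute. For $T\in C_{1,r}$ one has $I-T^*T\ge 0$ and, since $T^*T$ and $TT^*$ share their spectrum, $T^*T-r^2I\ge 0$; thus the first summand is positive, and the only possible obstruction to $\alpha(T^*,T)\ge 0$ is the self-commutator term $T^*(TT^*-T^*T)T$. I would then exhibit an explicit non-normal example---a suitable invertible $2\times 2$ matrix with both singular values in $[r,1]$ (hence in $C_{1,r}$) and a large enough off-diagonal entry---for which this term forces $\alpha(T^*,T)$ to acquire a negative eigenvalue, so that $T\notin C_\alpha$. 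I expect the two genuinely substantive points to be (i) securing the sharp constant $\sqrt{1+r^2}$ in the first part, which compels the use of the normal $\partial\A_r$-dilation rather than the bare spectral-set hypothesis, and (ii) checking that in the example the self-commutator term really overwhelms the positive term while the singular values stay inside $[r,1]$.
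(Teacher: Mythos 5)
Your treatment of the first assertion is correct and is essentially the paper's own route: your reformulation $\alpha(T^*,T)\ge 0 \iff T^*T+r^2T^{-*}T^{-1}\le (1+r^2)I$ is exactly Proposition \ref{prop_C_sph} (that $\kappa(T)$ is a spherical contraction), and your use of the $\A_r$-unitary dilation from Theorem \ref{thm_Agler} together with the identity $|z|^2+r^2|z|^{-2}\equiv 1+r^2$ on $\partial\A_r$ is the same computation that the paper's Proof $1$ of Theorem \ref{thm_Bello_1.1} carries out via the $\B_2$-unitary $\kappa(U)$ and $U_1^*U_1+U_2^*U_2=I$. Your argument for the inclusion $C_\alpha\subseteq C_{1,r}$ (which the paper only records in Theorem \ref{thm_include}) is also correct, and the observation that $T^*T+r^2T^{-*}T^{-1}$ is invariant under $T\mapsto rT^{-1}$ is a clean way to deduce $\|rT^{-1}\|\le 1$ from $\|T\|\le 1$.

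The gap is in the strictness $C_\alpha\ne C_{1,r}$. Your identity $\alpha(T^*,T)=(I-T^*T)(T^*T-r^2I)+T^*(TT^*-T^*T)T$ is correct and correctly isolates non-normality as the only obstruction, but you never produce the promised matrix, and this is not a formality that can be deferred: membership in $C_{1,r}$ forces both singular values into $[r,1]$, which for $T=\left(\begin{smallmatrix} s & b\\ 0 & s\end{smallmatrix}\right)$ caps the off-diagonal entry at $|b|\le 1-s^2$ (with $s^2\ge r$), and this cap works directly against making the self-commutator term ``large enough.'' For many parameter choices the set of $b$ with $\det\alpha(T^*,T)<0$ while $T$ remains in $C_{1,r}$ is very narrow (e.g.\ for $r=0.1$, $s^2=1/2$ it is roughly $b\in(0.497,0.5]$), so an explicit choice of parameters and a sign check of $\det\alpha(T^*,T)$ are genuinely required. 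This is precisely what the paper supplies in Section \ref{sec_vNset}, where for $r=0.35$, $w=0.91$, $a=(1-w^2)(w^2-r^2)$ and $T=\left(\begin{smallmatrix} w & \sqrt{2}\,a\\ 0 & w\end{smallmatrix}\right)$ one verifies $\det(I-T^*T)>0$ and $\|rT^{-1}\|<1$ (so $T\in C_{1,r}$) but $\det\alpha(T^*,T)<0$ (so $T\notin C_\alpha$). Until you exhibit and verify such an example, the assertion $C_\alpha\subsetneq C_{1,r}$ remains unproved.
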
 

In Section \ref{sec_spher}, we give an alternative characterization of operators in $C_\alpha$ via spherical contractions. By a spherical contraction, we mean a commuting tuple of operators $(T_1, \dotsc, T_n)$ that satisfies $T_1^*T_1+\dotsc +T_n^*T_n \leq I$. More precisely, we show that an invertible operator $T \in C_\alpha$ if and only if the induced pair $\kappa(T)$ is a spherical contraction. In the same section, we feature the $C_{\alpha}$ class in light of spherical contractions.

\smallskip 

Bello and Yakubovich \cite{Bello} proved that operators in $C_\alpha$ and $C_{1,r}$ classes need not have $\CA_r$ as a spectral set. Also, it was shown in \cite{Bello} that $\CA_r$ is complete $\sqrt{2}$-spectral set for all operators in $C_\alpha$. Further, Tsikalas \cite{Tsikalas} proved that the bound $K=\sqrt{2}$ is the sharpest. Again, it was also proved in \cite{TsikalasII} that if $K$ is the smallest constant such that $\CA_r$ is a $K$-spectral set for all operators in $C_{1, r}$, then $K \geq 2$. However, the bound $2$ is optimal or not for $C_{1,r}$ remains an open problem. In this paper, we ask a slightly different question: what are the minimal spectral sets for all operators in $C_{\alpha}$ and $C_{1,r}$? Evidently, we are in search of a minimal compact set $X$ that contains $\CA_r$ as a proper subset and any operator in $C_\alpha$ or $C_{1,r}$ having its spectrum in $X$ satisfies von Neumann's inequality (\ref{eqn:new-001}) when $K=1$. So, here we hold $K=1$ in (\ref{eqn:new-001}) and expand the underlying compact set to something bigger than $\CA_r$ to have von Neumann's inequality for operators in $C_{\alpha}$ or $C_{1,r}$. Needless to mention, $\DC$ is a spectral set for $C_\alpha$ and $C_{1,r}$ as these classes consist of contractions. In Section \ref{sec_vNset}, we prove that $\DC$ is a minimal spectral set for $C_\alpha$ and $C_{1,r}$. En route, we consider the following classes:
\begin{equation}\label{eqn_102}
	C_\alpha^*=\{T: T^* \in C_\alpha\},  \ 	C_1=\{T : T \ \text{is an invertible contraction} \}.
\end{equation}
In the same section, we show that $\DC$ is a minimal spectral set for $C_{\alpha}^*, \,C_\alpha \cup C_{\alpha}^*$ and $C_1$. The map $\kappa : \mathbb{C} \setminus \{0\} \rightarrow \C^2$ defined by $\kappa(z)=\left( \frac{z}{\sqrt{1+r^2}}, \frac{rz^{-1}}{\sqrt{1+r^2}} \right)$ is holomorphic and first appeared in \cite{A-R-SW} with some scaling of its components. Then it was further studied in detail in \cite{Tsikalas, TsikalasII}. Since $\kappa$ is holomorphic, one naturally investigates the minimality of spectral sets for the classes $\kappa(C_\alpha), \kappa(C_{\alpha}^*), \kappa(C_\alpha \cup C_\alpha^*), \kappa(C_{1,r})$ and $\kappa(C_1)$, where for a class $\mathcal{C}$ of invertible operators, $\kappa(\mathcal{C}) = \{\kappa(T) : T \in \mathcal{C}\}$ and $\kappa(T)$ is as in \eqref{eqn_k}. However, it turns out that $\kappa(\DC \setminus \{0\})$ is an unbounded set and consequently the notion of spectral set does not work any further. To address this issue, we consider an analogue of spectral sets by removing the compactness condition.
\begin{defn}\label{defn:new-001}
A set $Y \subset \mathbb C^n$ is called a \textit{von Neumann set} for a commuting $n$-tuple of operators $\underline T =(T_1, \dots, T_n)$ acting on a Hilbert space if the Taylor joint spectrum $\sigma_T(\underline T) \subseteq Y$ and von Neumann's inequality (\ref{eqn:new-001}) with $K=1$ holds for all rational functions $f(z_1, \dots , z_n)$ that have no singularities in $Y$. Moreover, $Y$ is called a \textit{minimal (closed) von Neumann set} for a class of commuting $n$-tuples of Hilbert space operators $\mathcal S$, if $Y$ is a (closed) von Neumann set for each member of $\mathcal S$ and for any proper (closed) subset $Y_1$ of $Y$, there is $\underline S=(S_1, \dots , S_n) \in \mathcal S$ such that $Y_1$ is not a (closed) von Neumann set for $\underline S$.
\end{defn}
The first consequence of the notion of von Neumann set is Lemma \ref{lem_invc_vN}, which states that $T$ is an invertible contraction if and only if $\overline{\mathbb D} \setminus \{0\}$ is a von Neumann set for $T$. Next, we have Theorem \ref{thm_vN_iff} which builds a bridge between von Neumann set for an invertible operator $T$ and that of $\kappa(T)$ by showing that a set $Y \subseteq \mathbb C \setminus \{0\}$ is a von Neumann set for $T$ if and only if $\kappa(Y)$ is a von Neumann set for $\kappa(T)$. In the same section, we further prove that $\kappa(\overline{\D} \setminus \{0\})$ is a minimal closed von Neumann set for each of the following classes: $\kappa(C_\alpha), \ \kappa(C_\alpha^*), \ \kappa(C_\alpha \cup C_\alpha^*), \ \kappa(C_{1,r}), \ \kappa(C_1)$.

\smallskip

McCullough and Pascoe \cite{Pas-McCull} considered the annulus $A_r=\{z \in \C : r<|z|<r^{-1}\}$ for $r \in (0,1)$ and introduced the following classes of operators associated with it:
\begin{align*}
SA_r&=\{T: \overline{A}_r  \ \text{is a spectral set for} \ T \};\\
PA_r&=\{T\,:\, T \text{ is an invertible operator and } \, r^2+r^{-2}-T^*T-T^{-*}T^{-1} \geq 0\};\\
QA_r &= \{T\,:\, T \text{ is an invertible operator and } \, \|T\|, \, \|T^{-1}\| \leq r^{-1}  \}.
\end{align*}
The class $QA_r$ is referred to as the \textit{quantum annulus}. It was shown in \cite{PalI} by the authors that the map $\varphi: A_r \to \mathbb{A}_{r^2}$ given by $\varphi(z)=rz$ is a biholomorphism. By an application this map, we establish the equivalence of the class of $\A_r$-contractions, the $C_{\alpha}$ class and the $C_{1,r}$ class with $SA_r$, $PA_r$ and $QA_r$, respectively. In Section \ref{sec_quant}, we prove results for $SA_r$, $PA_r$, $QA_r$ that are analogous to the results of Section \ref{sec_vNset}. 

\section{Rational dilation on an annulus: an alternative approach}\label{sec_Agler}
	
	%\vspace{0.2cm}

\noindent Ever since von Neumann \cite{v-N} introduced the notion of spectral set and Sz. Nagy \cite{B-Nagy} discovered unitary dilation of a contraction, mathematicians used to study these fundamental concepts (i.e., spectral set, complete spectral set, rational dilation) independently for compact sets and tuples of commuting operators associated with them. However, it trivially follows that if a compact set $X\subset \C^n$ is a complete spectral set for a commuting $n$-tuple of operators $\underline{T}$, then it is a spectral set for $\underline{T}$. Later, Arveson \cite{ArvesonII} removed much of the mystery by connecting the two dots, namely the rational dilation and complete spectral set.

\begin{thm}[Arveson, \cite{ArvesonII}]\label{thm_Arveson} Let $\underline{T}$ be a tuple of commuting operators on a Hilbert space $\HS$ and let $X \subset \C^n$ be a compact set. Then $X$ is a complete spectral set for $\underline{T}$ if and only if $\underline{T}$ has a normal $bX$-dilation.	
\end{thm}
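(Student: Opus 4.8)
The plan is to establish both implications, with the direction from dilation to complete spectral set being routine and the converse carrying all the analytic weight. For the easy direction, suppose there exist $\KS$, an isometry $V : \HS \to \KS$ and a commuting normal tuple $\underline{N}=(N_1, \dots, N_n)$ with $\sigma_T(\underline{N}) \subseteq bX$ satisfying $f(\underline{T})=V^*f(\underline{N})V$ for all $f \in Rat(X)$. Given a matricial rational function $F=[f_{ij}]_{i,j=1}^m$ with entries in $Rat(X)$, applying the dilation entrywise gives $F(\underline{T})=(I_m \otimes V)^* F(\underline{N})(I_m \otimes V)$. Since $\underline{N}$ is a commuting normal tuple, the spectral theorem furnishes a joint spectral measure supported on $\sigma_T(\underline{N}) \subseteq bX \subseteq X$, whence $\|F(\underline{N})\| \leq \sup\{\|[f_{ij}(\xi)]\| : \xi \in bX\} \leq \|F\|_{\infty, X}$. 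As $V$ is an isometry, $\|F(\underline{T})\| \leq \|F(\underline{N})\| \leq \|F\|_{\infty, X}$, and the containment $\sigma_T(\underline{T}) \subseteq X$ is already built into the hypothesis that $\underline{T}$ has $X$ as a spectral set underlying the definition of a normal $bX$-dilation. Thus $X$ is a complete spectral set for $\underline{T}$.

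For the converse, I would realize $Rat(X)$ as a unital subalgebra of the commutative C*-algebra $C(bX)$. The enabling fact is that the distinguished boundary $bX$ is precisely where rational functions attain their maximum modulus, so $\|f\|_{\infty, X}=\|f\|_{\infty, bX}$ for every $f \in Rat(X)$ and the restriction map $Rat(X) \hookrightarrow C(bX)$ is isometric. The complete spectral set hypothesis then says exactly that the unital homomorphism $\rho : Rat(X) \to \mathcal{B}(\HS)$ given by $\rho(f)=f(\underline{T})$ is completely contractive.

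The heart of the argument is to upgrade this completely contractive unital homomorphism to a completely positive map on the full C*-algebra. By Paulsen's characterization, a unital completely contractive map on an operator algebra extends to a unital completely positive map on the generated operator system, and Arveson's extension theorem then extends it to a unital completely positive map $\Phi : C(bX) \to \mathcal{B}(\HS)$ with $\Phi|_{Rat(X)}=\rho$. Applying Stinespring's dilation theorem to $\Phi$ yields a Hilbert space $\KS$, a unital $*$-representation $\pi : C(bX) \to \mathcal{B}(\KS)$ and an isometry $V : \HS \to \KS$ with $\Phi(g)=V^*\pi(g)V$ for all $g \in C(bX)$. Setting $N_j=\pi(z_j|_{bX})$ produces a commuting normal tuple $\underline{N}$ whose joint spectrum lies in the maximal ideal space $bX$ of $C(bX)$. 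Since $\pi$ is multiplicative and every $f \in Rat(X)$ restricts to an element of $C(bX)$ (its denominator being nonvanishing on $X \supseteq bX$), one obtains $f(\underline{T})=\rho(f)=\Phi(f)=V^*\pi(f)V=V^*f(\underline{N})V$, which is the sought normal $bX$-dilation.

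The main obstacle is the extension step of the previous paragraph: it is exactly here that the word \emph{complete} is indispensable, since a merely contractive unital homomorphism need not extend to a completely positive map and hence need not dilate. This is precisely why the equivalence pairs normal $bX$-dilations with \emph{complete} spectral sets rather than with spectral sets. A secondary point requiring care is the identity $\pi(f)=f(\underline{N})$ for rational (not just polynomial) $f$, which rests on the continuous functional calculus for the normal tuple $\underline{N}$ together with the absence of poles of $f$ on $bX$.
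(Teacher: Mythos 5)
The paper does not prove this statement; it is quoted as Arveson's theorem from \cite{ArvesonII}, so there is no internal proof to compare against. Your argument is the standard and correct one (compression plus the spectral theorem for the easy direction; isometric embedding of $Rat(X)$ into $C(bX)$ via the Shilov boundary, Paulsen/Arveson extension to a unital completely positive map, and Stinespring dilation for the converse), with only the minor unstated point that $\|[f_{ij}]\|_{\infty,X}=\|[f_{ij}]\|_{\infty,bX}$ also holds for \emph{matricial} rational functions (reduce to scalars via $\sum_{i,j}\overline{u}_i f_{ij} v_j \in Rat(X)$), which is needed to view $\rho$ as completely contractive on $Rat(X)\subset C(bX)$.
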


 In this section, we shall prove Agler's theorem (i.e., Theorem \ref{thm_Agler}) independently by repeated applications of Theorem \ref{thm_Arveson}. However, our starting point will be Theorem \ref{Annulus dilationII} due to Dritschel-Jury-McCullough \cite{Dritschel} and McCullough \cite{McCullough}. We begin with the following elementary result which will be useful in this context.

\begin{lem}\label{thm_nested}
Let $\{X_n\}_{n \in \N}$ be a sequence of non-empty compact sets in $\C$ such that $X_n \supseteq X_{n+1}$ for every $n \in \N$. Then $X=\underset{n \in \N}{\bigcap}X_n$ is a (complete) spectral set for $T$ if and only if each $X_n$ is a (complete) spectral set for $T$.  
\end{lem}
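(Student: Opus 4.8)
The plan is to prove the two implications separately, handling the scalar (\emph{spectral set}) and matricial (\emph{complete spectral set}) cases in parallel, since the arguments are identical once moduli $|f(\cdot)|$ are replaced by operator norms $\|[f_{ij}(\cdot)]\|$ of matrices. Throughout I will exploit the two facts that $X=\bigcap_n X_n\subseteq X_N$ for every $N$ and that the $X_n$ are compact and nested.

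One implication is immediate. Suppose $X$ is a (complete) spectral set for $T$. Given $f=p/q\in Rat(X_N)$, the denominator $q$ is zero-free on $X_N\supseteq X$, so $f\in Rat(X)$; moreover $\|f\|_{\infty,X}\le \|f\|_{\infty,X_N}$ because the supremum is taken over the smaller set $X\subseteq X_N$. Combining these with $\sigma(T)\subseteq X\subseteq X_N$ and von Neumann's inequality on $X$ yields $\|f(T)\|\le \|f\|_{\infty,X}\le \|f\|_{\infty,X_N}$, so each $X_N$ is a (complete) spectral set for $T$. The matricial case is verbatim.

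The substance is the converse: assuming each $X_n$ is a (complete) spectral set for $T$, I want the same for $X$. The spectral containment is free, since $\sigma(T)\subseteq X_n$ for all $n$ forces $\sigma(T)\subseteq\bigcap_n X_n=X$. For von Neumann's inequality I fix $f=p/q\in Rat(X)$ (or a matricial $f=[f_{ij}]$ with each $f_{ij}\in Rat(X)$). The first step is a zero-avoidance claim: there is an index $N$ so that $q$ (respectively every denominator $q_{ij}$, of which there are only finitely many) is zero-free on $X_N$, whence $f\in Rat(X_m)$ for all $m\ge N$ and $f(T)$ is the same operator regardless of the ambient set. I would prove the claim by contradiction and compactness: if $q$ vanished at some $w_n\in X_n$ for every $n$, a subsequential limit $w^\ast$ of $(w_n)$ would lie in every $X_p$ (each $X_p$ is closed and contains the tail), hence in $X$, and continuity would force $q(w^\ast)=0$, contradicting $q\neq 0$ on $X$.

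With $f\in Rat(X_m)$ for all $m\ge N$, the hypothesis on each $X_m$ gives $\|f(T)\|\le \|f\|_{\infty,X_m}$, and it remains to pass to the limit. The key analytic step — which I regard as the main obstacle — is that $\|f\|_{\infty,X_m}\to \|f\|_{\infty,X}$ as $m\to\infty$. The sequence $\|f\|_{\infty,X_m}$ is decreasing with limit $L\ge \|f\|_{\infty,X}$; choosing maximizers $x_m\in X_m$ of $|f|$ and extracting a convergent subsequence $x_{m_j}\to x^\ast\in\bigcap_p X_p=X$ yields $L=\lim_j|f(x_{m_j})|=|f(x^\ast)|\le \|f\|_{\infty,X}$, so $L=\|f\|_{\infty,X}$. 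Letting $m\to\infty$ in $\|f(T)\|\le \|f\|_{\infty,X_m}$ then gives $\|f(T)\|\le \|f\|_{\infty,X}$, completing the converse. The complete-spectral-set case is identical, replacing the scalar function $|f(\cdot)|$ by the continuous function $z\mapsto \|[f_{ij}(z)]\|$ and the single denominator $q$ by the finitely many $q_{ij}$.
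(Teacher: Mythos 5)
Your proposal is correct and follows essentially the same route as the paper: the forward direction via monotonicity of $Rat(\cdot)$ and of sup-norms under inclusion, and the converse by first locating an $N$ with $f\in Rat(X_N)$ (you argue via a subsequential limit of zeros of the denominator, the paper via a pole recurring infinitely often in the nested sets — an immaterial variation) and then passing to the limit $\|f\|_{\infty,X_m}\to\|f\|_{\infty,X}$ using maximizers $x_m\in X_m$ and a convergent subsequence landing in $X$. No gaps.
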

	
\begin{proof}

It follows from the definition of (complete) spectral set that a compact superset of a (complete) spectral set is always a (complete) spectral set. So, the forward part is obvious.

\smallskip

We prove the converse part for complete spectral set only as the proof for the spectral set is similar. Conversely, suppose each $X_n$ is a complete spectral set for $T$. Let $n \in \N$. Then $\sigma(T) \subseteq X_n$ and for any matricial rational function $[f_{ij}]_{k \times k} \in M_k(Rat(X_n))$, we have 
\begin{equation}\label{eqn_nested}
\|[f_{ij}(T)]_{k \times k}\| \leq \sup \left\{\|[f_{ij}(x)]_{k \times k}\| : x \in X_n \right\}.
\end{equation}
It is evident that $\sigma(T) \subseteq X$. Let $f \in Rat(X)$. Then one can find a large enough positive integer $N$ such that $f$ has poles outside $X_N$. If not, then each $X_n$ must contain a pole, say $x_n$, of $f$. Since $f$ has only finitely many poles, there is a pole of $f$, say $\alpha$, that occurs infinitely many times in the sequence $\{x_n\}$. Thus, $\alpha$ must belong to infinitely many members of the family of compact sets $\{X_n\}$. If $\alpha \notin X$, then there exists some $m\in \N$ such that $\alpha \notin X_m$ and so, $\alpha \notin X_n$ for all $n \geq m$. This shows that $\alpha$ belongs to only finitely many $X_n$'s which is not true. So, we have that $\alpha \in X$ which gives a contradiction to the fact that $f \in Rat(X)$. Hence, there exists $N \in \N$ such that $f \in Rat(X_N)$ and so, $f \in Rat(X_n)$ for $n \geq N$.

\smallskip 

Let $[f_{ij}]_{k \times k} \in M_k(Rat(X))$ for $k \in \N$. Since there are only finitely any $f_{ij}$'s, one can choose $n_0$ large enough so that each $f_{ij}$  has poles outside $X_{n_0}$. Thus, $[f_{ij}]_{k \times k} \in M_k(Rat(X_{n}))$ for all $n \geq n_0$. Let $n \geq n_0$. Since $X_n$ is compact, there is $x_n \in X_n$ such that
\begin{equation}\label{eqn_502}
\|[f_{ij}(T)]_{k \times k}\| \leq \sup \left\{\|[f_{ij}(x)]_{k \times k}\| : x \in X_n \right\}=\|[f_{ij}(x_n)]_{k \times k}\|.
\end{equation}
Since $X_n \supseteq X_{n+1}$ for all $n \in \N$, the sequence $\{x_n\}_{n \geq n_0} \subseteq X_{n_0}$ is bounded. So, $\{x_n\}_{n \geq n_0}$ has a convergent subsequence, say $\{x_{n_t}\}$  and let its limit be $x_0$. Clearly, each $x_{n_t} \in X_{n_1}$ and so, $x_0 \in X_{n_1}$. Similarly, $x_{n_t} \in X_{n_2}$ for $t \geq 2$ and so, $x_0 \in X_{n_2}$. Continuing this way, we have that $x_0 \in  X_{n_t}$ for all $t \in \N$ and so, $x_0$ belongs to infinitely many $X_n$'s. As discussed above, this is possible only if $x_0 \in X$. Letting $n_t \to \infty$ in (\ref{eqn_502}), we have that 
\[
\|[f_{ij}(T)]_{k \times k}\| \leq \|[f_{ij}(x_0)]_{k \times k}\| \leq \sup \left\{\|[f_{ij}(x)]_{k \times k}\| : x \in X \right\}.
\]
Hence, $X$ is a complete spectral set for $T$ and the proof is complete. 
\end{proof}

\noindent \textbf{\textit{Proof of Theorem \ref{thm_Agler}}.} Suppose $T$ is an $\A_r$-contraction. We intend to show that $T$ admits a normal $\partial {\A}_r$-dilation. By Theorem \ref{thm_Arveson}, it suffices if we show that $\CA_r$ is a complete spectral set for $T$. Let $ k \in \N$ be arbitrary and consider any matricial rational function $[f_{ij}]_{k \times k} \in M_k(Rat(\CA_r))$. One can choose sufficiently small $\epsilon \in (0, r)$ such that every $f_{ij} \ ( 1 \leq i, j \leq k)$ is holomorphic on the closed annulus $\overline{\A}_{r-\epsilon}^{1+\epsilon}$, where $\A_{r-\epsilon}^{1+\epsilon}=\{z \in\mathbb{C}: r-\epsilon <|z| < 1+\epsilon \}$. Needless to mention that the choice of $\epsilon$ depends on $f_{ij}$ for $ 1 \leq i, j \leq k$. Note that $\overline{\A}_r \subsetneq \A_{r-\epsilon}^{1+\epsilon}$. Set $s=\frac{r-\epsilon}{1+\epsilon}$. The map $\phi: \overline{\A}_{r-\epsilon}^{1+\epsilon} \to \overline{\A}_s$ defined by $\displaystyle \phi(z)=\frac{z}{1+\epsilon}$ is a biholomorphism. Let us define $J=\phi(T)$. It follows from spectral mapping theorem that
\begin{equation*}
\begin{split}
\sigma(J)=\sigma(\phi(T))=\phi(\sigma(T)) \subseteq \phi(\overline{\A}_r) \subseteq \phi(\A_{r-\epsilon}^{1+\epsilon}) \subseteq \A_s\,.
\end{split}
\end{equation*}
Let $g \in Rat(\CA_s)$ be arbitrary. Then  
$g\circ \phi \in Rat(\CA_r)$ and since $T$ is an $\A_r$-contraction, we have that
	\begin{equation*}
		\begin{split}
			\|g(J)\|=\|(g\circ \phi)(T)\| 
			 \leq \sup\{|g\circ \phi(z)|: z \in \overline{\A}_r\} 
			 	 \leq  \|g\|_{\infty, \CA_s}.\\
		\end{split}
	\end{equation*}
	Consequently, $J$ is an $\A_s$-contraction with $\sigma(J) \subseteq \A_s$. It follows from Theorem \ref{Annulus dilationII} that $J$ admits a normal $\partial \A_s$-dilation and so, by Theorem \ref{thm_Arveson}, $\overline{\A}_s$ is a complete spectral set for $J$. Therefore, 
	\begin{equation}\label{eqn_503}
		\begin{split}
			\|[f_{ij}(T)]_{k \times k}\|
			=\|[(f_{ij}\circ \phi^{-1})(J)]_{k \times k}\|
			 \leq \sup \left\{\|[f_{ij}\circ \phi^{-1}(z)]_{k \times k}\| : z \in \overline{\A}_s \right\},
		\end{split}
	\end{equation}
	where the last inequality follows because $[f_{ij}\circ \phi^{-1}]_{k \times k} \in M_k(Rat(\CA_s))$ and $\CA_s$ is a complete spectral set for $J$. Using (\ref{eqn_503}), we have
	\begin{equation}\label{eqn:4.1}
		\begin{split}
			\|[f_{ij}(T)]_{k \times k}\| 
			 \leq \sup \left\{\|[f_{ij}(w)]_{k \times k}\| : w \in \overline{\A}_{r-\epsilon}^{1+\epsilon} \right\}.\\
		\end{split}
	\end{equation}
	The above inequality holds for all sufficiently small $\epsilon>0$. For sufficiently large  $n_0 \in \mathbb{N}$, it follows from (\ref{eqn:4.1}) that $\|[f_{ij}(T)]_{k \times k}\|	 \leq \sup \left\{\|[f_{ij}(x)]_{k \times k}\| : x \in X_n\right\}$ for all $n \geq n_0$, where $X_n=\CA_{r-1 \slash n}^{1+1\slash n}$. Note that $X_n \supseteq X_{n+1}$ and $\bigcap_{n \geq n_0}X_n=\CA_r$. Following the proof of Theorem \ref{thm_nested}, one can show that $\CA_r$ is a complete spectral set for $T$. An application of Theorem \ref{thm_Arveson} now concludes the proof.
\qed

\section{The $\A_r$-contractions and the principal variety in the biball}\label{sec_var}

\noindent In this section, we present an alternative characterization for $\A_r$-contractions via a certain variety intersecting the biball $\mathbb{B}_2$, where $\mathbb{B}_2=\{(z_1, z_2) \in \C^2 : |z_1|^2+|z_2|^2 <1 \}$. As mentioned before, this variety is set of zeros of the following polynomial:
\begin{equation} \label{eqn:new-T-001}
q(z_1, z_2)=z_1z_2-\frac{r}{1+r^2}.
\end{equation}
The set $Z(q) \cap \overline{\B}_2$ is referred to as the principal variety and plays a crucial role in the study of $\A_r$-contractions. Unlike the closed annulus $\CA_r$, the principal variety is polynomially convex. This allows to study an $\A_r$-contraction $T$ in a simpler way in terms of the induced pair $\kappa(T)$ (as in \ref{eqn_k}) with the help of polynomial approximation on $Z(q) \cap \overline{\B}_2$ in view of Oka-Weil theorem (see CH-7 of \cite{Oka-Weil}). However, before displaying our results let us recall a few useful terminologies and results related to the operators associated with the biball or even more general polyball and their interplay with spherical contractions. 

\begin{defn} \label{defn:new-T-01}
A tuple of commuting operators $(T_1,\dots , T_n)$ for which the closed polyball $\overline{\mathbb B}_n=\{(z_1, \dots , z_n)\in \C^n \,:\, |z_1|^2+\dots +|z_n|^2 \leq 1  \}$ is a spectral set is called a \textit{$\mathbb B_n$-contraction}. A $\mathbb B_n$-\textit{unitary} is a tuple of commuting normal operators $(T_1,\dots, T_n)$ with its Taylor joint spectrum $\sigma_T(T_1,\dots,T_n) \subseteq b\overline{\mathbb B}_n$, where $b\overline{\mathbb B}_n$ is the distinguished boundary of $\overline{\mathbb B}_n$. A $\mathbb B_n$-\textit{isometry} is the restriction of a $\mathbb B_n$-unitary $(T_1,\dots, T_n)$ to a joint invariant subspace of $T_1, \dots, T_n$. Also, a \textit{completely non-unitary $\B_n$-contraction} or simply a \textit{c.n.u. $\B_n$-contraction} is a $\B_n$-contraction whose restriction to any nonzero joint reducing subspace of its components is not a $\B_n$-unitary.
\end{defn}
\begin{defn}\label{eqn:new-T-02}
A commuting tuple of operators $\UT=(T_1, \dotsc, T_n)$ is said to be
		\begin{enumerate}
			\item a \textit{spherical contraction} if $T_1^*T_1 + \dotsc + T_n^*T_n \leq I$;			
			\item a \textit{spherical unitary} if each $T_j$ is normal and $T_1^*T_1 + \dotsc + T_n^*T_n = I$;
			
			\item a \textit{spherical isometry} if $T_1^*T_1 + \dotsc + T_n^*T_n =I$.
		\end{enumerate}
\end{defn}
Not every spherical contraction is a $\B_n$-contraction, see Arveson's work \cite{ArvesonIII} for further details. However, the next result shows that these classes coincide at the level of unitaries and isometries.	
	\begin{thm}[\cite{AthavaleIII}, Proposition 2]\label{thm_Bn_iso}
		Let $\underline{T}=(T_1, \dotsc, T_n)$ be a tuple of commuting operators acting on a Hilbert space $\mathcal{H}$. Then $\underline{T}$ is a $\B_n$-unitary (respectively, $\B_n$-isometry) if and only if $\underline{T}$ is a spherical unitary (respectively, spherical isometry).
	\end{thm}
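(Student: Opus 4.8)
The plan is to prove the unitary equivalence first and then bootstrap to the isometry equivalence, using throughout that the distinguished boundary $b\overline{\mathbb B}_n$ of the closed ball is its topological boundary, the sphere $\{(z_1,\dots,z_n) : \sum_{j=1}^n |z_j|^2 = 1\}$.

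For the unitary case, suppose $\underline{T}$ is a spherical unitary. Then the $T_j$ are commuting normals, so by the Fuglede--Putnam theorem they doubly commute and generate a commutative von Neumann algebra carrying a joint spectral measure $E$ supported on $\sigma_T(\underline{T})$. The identity $\sum_j T_j^* T_j = I$ then reads $\int \big( \sum_j |z_j|^2 - 1 \big)\, dE = 0$ with a nonnegative integrand, which forces $E$ to be supported on the sphere; hence $\sigma_T(\underline{T}) \subseteq b\overline{\mathbb B}_n$ and $\underline{T}$ is a $\mathbb B_n$-unitary. Conversely, if $\underline{T}$ is a $\mathbb B_n$-unitary, then $E$ is supported on the sphere, where $\sum_j |z_j|^2 = 1$, so integrating against $E$ recovers $\sum_j T_j^* T_j = I$ and $\underline{T}$ is a spherical unitary.

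For the isometry equivalence, one direction is pure restriction. If $\underline{T}$ is a $\mathbb B_n$-isometry, write $T_j = N_j|_{\mathcal{H}}$ for a $\mathbb B_n$-unitary $\underline{N}$ on some $\mathcal{K} \supseteq \mathcal{H}$ with $\mathcal{H}$ jointly invariant. By the unitary case $\sum_j N_j^* N_j = I$, and invariance gives $N_j h = T_j h \in \mathcal{H}$ for $h \in \mathcal{H}$; hence $\sum_j \langle T_j^* T_j h, h \rangle = \sum_j \| N_j h \|^2 = \| h \|^2$, so $\sum_j T_j^* T_j = I$ and $\underline{T}$ is a spherical isometry.

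The reverse direction carries the weight of the proof. Given a spherical isometry $\underline{T}$, the plan is to manufacture a spherical-unitary extension. First I would establish that a spherical isometry is a subnormal tuple: this is the main obstacle and is exactly the hard content of Athavale's argument, verified through the positivity (moment) conditions characterizing joint subnormality, which the identity $\sum_j T_j^* T_j = I$ is precisely tailored to satisfy. Granting subnormality, pass to the minimal normal extension $\underline{N}$ on $\mathcal{K} = \bigvee \{ N^{*\alpha} h : h \in \mathcal{H},\ \alpha \in \mathbb{Z}_+^n \}$, so that $\mathcal{H}$ is jointly invariant and $N_j|_{\mathcal{H}} = T_j$. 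Set $A = \sum_j N_j^* N_j$, a positive operator lying in the commutative von Neumann algebra generated by $\underline{N}$. Since the spectrum of a minimal normal extension is contained in the Taylor spectrum of the subnormal tuple, and a spherical contraction has Taylor spectrum inside $\overline{\mathbb B}_n$, we obtain $A \leq I$. Exactly as in the easy direction, $\langle (I - A) h, h \rangle = 0$ for every $h \in \mathcal{H}$; combined with $I - A \geq 0$ this yields $\mathcal{H} \subseteq \ker(I - A)$. As $A$ is self-adjoint and commutes with each $N_j$ and $N_j^*$, the subspace $\ker(I - A)$ reduces $\underline{N}$; containing $\mathcal{H}$, minimality forces $\ker(I - A) = \mathcal{K}$, that is $A = I$. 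Thus $\underline{N}$ is a spherical unitary, hence a $\mathbb B_n$-unitary by the first part, and $\underline{T} = \underline{N}|_{\mathcal{H}}$ is its restriction to a joint invariant subspace, i.e.\ a $\mathbb B_n$-isometry. I expect the subnormality of spherical isometries to be the genuine difficulty; once it is in hand, the propagation of $A = I$ across all of $\mathcal{K}$ is a soft spectral-theoretic argument.
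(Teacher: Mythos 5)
The paper does not prove this statement at all: it is imported verbatim from Athavale's work (\cite{AthavaleIII}, Proposition 2), so there is no internal proof to compare against; your proposal must therefore stand on its own, and it has a genuine gap exactly where the theorem has its content. The unitary equivalence and the implication ``$\B_n$-isometry $\Rightarrow$ spherical isometry'' are soft spectral theory and you handle them essentially correctly, modulo one small slip: the integrand $\sum_j|z_j|^2-1$ is not nonnegative on an arbitrary support, so $\int\bigl(\sum_j|z_j|^2-1\bigr)\,dE=0$ does not by itself localize $E$ to the sphere; you should instead observe that $\int\bigl(\sum_j|z_j|^2-1\bigr)^2\,dE=\bigl(\sum_jT_j^*T_j-I\bigr)^2=0$, whose integrand is nonnegative. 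The real problem is the implication ``spherical isometry $\Rightarrow$ $\B_n$-isometry,'' which you open with ``First I would establish that a spherical isometry is a subnormal tuple\dots Granting subnormality\dots''. That subnormality claim is not a routine verification to be granted: it \emph{is} Athavale's theorem, and proving it requires establishing the multivariable Halmos--Bram/Ito positivity conditions $\sum_{\alpha,\beta}\langle \underline{T}^\beta h_\alpha,\underline{T}^\alpha h_\beta\rangle\ge 0$ (or an equivalent hereditary-calculus or model-theoretic construction of a normal extension); the identity $\sum_jT_j^*T_j=I$ does not make this automatic in any way you have exhibited. Deferring it leaves the proof incomplete, since every other step in your argument is elementary by comparison.

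Even granting subnormality, note that your derivation of $A=\sum_jN_j^*N_j\le I$ quietly imports a second nontrivial fact, namely the spectral inclusion $\sigma_T(\underline{N})\subseteq\sigma_T(\underline{T})$ for the minimal normal extension of a subnormal tuple, together with $\sigma_T(\underline{T})\subseteq\overline{\B}_n$ for a spherical contraction; both are true but neither is free, and each needs a citation or a proof (for the latter, one can pass through the joint approximate point spectrum, on which $\sum_j|\lambda_j|^2\le\|\sum_jT_j^*T_j\|\le 1$, and the fact that it contains $\partial\sigma_T(\underline{T})$). Once $A\le I$ is in hand, your propagation $\HS\subseteq\ker(I-A)$, hence $\ker(I-A)=\KS$ by minimality and $A=I$, is correct, and the conclusion that $\underline{N}$ is a spherical unitary and $\underline{T}$ its restriction follows. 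So the architecture is sound, but as written the proposal replaces the theorem's essential difficulty with an IOU and cannot be counted as a proof.
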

	
The canonical decomposition of a contraction (see CH-I of \cite{Nagy}) splits a contraction $T$ into two orthogonal parts of which one is a unitary and the other is a completely non-unitary (c.n.u.) contraction. Eschmeier \cite{EschmeierII} found analogous orthogonal decomposition for a $\B_n$-contraction.

\begin{thm}[Eschmeier, \cite{EschmeierII}]\label{thm_decomp-Ball}
		Let $\underline{T}=(T_1, \dotsc, T_n)$ be a $\B_n$-contraction on a Hilbert space $\mathcal{H}$. Then there is an orthogonal decomposition of $\mathcal{H}$ into joint reducing subspaces $\mathcal{H}_u$ and $\mathcal{H}_c$ of $\underline T$ such that 
		 $(T_1|_{\mathcal{H}_u}, \dotsc, T_n|_{\mathcal{H}_u})$ is a $\B_n$-unitary and $(T_1|_{\mathcal{H}_c}, \dotsc, T_n|_{\mathcal{H}_c})$ is a c.n.u. $\B_n$-contraction.
	\end{thm}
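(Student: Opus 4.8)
The plan is to imitate the classical canonical decomposition of a single contraction, with ``unitary'' replaced by ``$\B_n$-unitary,'' and to exploit Theorem \ref{thm_Bn_iso}: since a $\B_n$-unitary is the same as a spherical unitary, a joint reducing subspace $\mathcal M$ of $\underline T$ carries a $\B_n$-unitary precisely when each $T_j|_{\mathcal M}$ is normal and $\sum_j (T_j|_{\mathcal M})^*(T_j|_{\mathcal M}) = I_{\mathcal M}$. I would let $\mathcal F$ denote the family of all closed joint reducing subspaces $\mathcal M$ of $\underline T$ for which $\underline T|_{\mathcal M}$ is a $\B_n$-unitary; this family is non-empty since $\{0\} \in \mathcal F$. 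Define $\mathcal H_u = \overline{\sum_{\mathcal M \in \mathcal F} \mathcal M}$ and $\mathcal H_c = \mathcal H \ominus \mathcal H_u$. Because every $\mathcal M \in \mathcal F$ is invariant under each $T_j$ and each $T_j^*$, so are the algebraic sum $\sum_{\mathcal M} \mathcal M$ and its closure; hence both $\mathcal H_u$ and $\mathcal H_c$ jointly reduce $\underline T$.

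The heart of the argument, and the step I expect to be the main obstacle, is to show that $\underline T|_{\mathcal H_u}$ is again a $\B_n$-unitary, i.e. that the closed span of ``unitary'' reducing subspaces stays unitary. The difficulty is that distinct members of $\mathcal F$ need not be orthogonal, so one cannot simply add norms over the summands. I would circumvent this by testing unitarity through two globally defined bounded self-adjoint operators on $\mathcal H$: the self-commutators $N_j = T_j^* T_j - T_j T_j^*$ and the spherical defect $E = \sum_j T_j^* T_j - I$. For any $\mathcal M \in \mathcal F$ and $h \in \mathcal M$, reducibility lets one evaluate $T_j^* T_j h$ and $T_j T_j^* h$ inside $\mathcal M$, and then the normality of $T_j|_{\mathcal M}$ together with the spherical isometry identity forces $N_j h = 0$ and $E h = 0$. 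Being linear and bounded, $N_j$ and $E$ therefore vanish on the algebraic sum $\sum_{\mathcal M \in \mathcal F} \mathcal M$ and, by continuity, on its closure $\mathcal H_u$. Consequently each $T_j|_{\mathcal H_u}$ is normal (since $N_j|_{\mathcal H_u}=0$), the tuple commutes, and $\sum_j (T_j|_{\mathcal H_u})^*(T_j|_{\mathcal H_u}) = I_{\mathcal H_u}$ (since $E|_{\mathcal H_u}=0$); by Theorem \ref{thm_Bn_iso} this makes $\underline T|_{\mathcal H_u}$ a $\B_n$-unitary. This linearization trick is exactly what defeats the non-orthogonality obstacle, and it works verbatim for an arbitrary, possibly infinite, family $\mathcal F$.

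Finally I would verify that $\underline T|_{\mathcal H_c}$ is a completely non-unitary $\B_n$-contraction. First, the restriction of a $\B_n$-contraction to a reducing subspace is again a $\B_n$-contraction: the Taylor spectrum of the restriction lies inside $\sigma_T(\underline T) \subseteq \overline{\mathbb B}_n$, and $f(\underline T|_{\mathcal H_c}) = f(\underline T)|_{\mathcal H_c}$ gives $\|f(\underline T|_{\mathcal H_c})\| \le \|f(\underline T)\| \le \|f\|_{\infty,\,\overline{\mathbb B}_n}$ for every $f \in Rat(\overline{\mathbb B}_n)$, so $\overline{\mathbb B}_n$ remains a spectral set. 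To see c.n.u., suppose $\mathcal N \subseteq \mathcal H_c$ is a nonzero joint reducing subspace on which $\underline T$ restricts to a $\B_n$-unitary. Then $\mathcal N \in \mathcal F$, whence $\mathcal N \subseteq \mathcal H_u$; but $\mathcal N \subseteq \mathcal H_c = \mathcal H_u^{\perp}$, forcing $\mathcal N = \{0\}$, a contradiction. Thus $\underline T|_{\mathcal H_c}$ has no nonzero $\B_n$-unitary reducing part, which is precisely the c.n.u. condition, and the orthogonal decomposition $\mathcal H = \mathcal H_u \oplus \mathcal H_c$ has all the asserted properties.
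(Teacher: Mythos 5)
The paper does not actually prove Theorem \ref{thm_decomp-Ball}; it is quoted verbatim from Eschmeier \cite{EschmeierII} and used as a black box, so there is no in-paper argument to compare yours against. That said, your proof is correct and self-contained within the paper's framework. The decomposition via the closed span $\mathcal H_u$ of all $\B_n$-unitary reducing subspaces is the natural analogue of the Sz.-Nagy--Foias canonical decomposition, and your linearization device --- testing membership in the ``unitary part'' through the vanishing of the globally defined operators $N_j=T_j^*T_j-T_jT_j^*$ and $E=\sum_j T_j^*T_j-I$, which are linear and bounded and hence annihilate the closed span once they annihilate each summand --- correctly disposes of the non-orthogonality of distinct members of $\mathcal F$. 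The reduction of ``$\B_n$-unitary'' to ``commuting normal tuple with $\sum_j T_j^*T_j=I$'' is exactly what Theorem \ref{thm_Bn_iso} licenses, and the remaining steps (that $\sigma_T$ of a direct summand is contained in $\sigma_T$ of the direct sum, that $f(\underline T|_{\mathcal H_c})=f(\underline T)|_{\mathcal H_c}$ preserves the von Neumann inequality, and that a unitary reducing subspace of $\underline T|_{\mathcal H_c}$ would belong to $\mathcal F$ and hence to $\mathcal H_u$) are all sound. Eschmeier's own treatment is embedded in a more elaborate study of spherical contractions; your argument extracts just the decomposition and is arguably more elementary, at the cost of relying on Athavale's equivalence of $\B_n$-unitaries and spherical unitaries, which the paper supplies anyway.
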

	
Below, we recall a few results on $\A_r$-contractions that will be used in sequel.	
	\begin{prop}[\cite{PalII}, Proposition 2.4]\label{prop_A_unit} A normal operator $T$ acting on a Hilbert space $\mathcal{H}$ is an $\A_r$-unitary if and only if $(I-T^*T)(T^*T-r^2I)=0$.
\end{prop}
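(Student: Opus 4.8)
The plan is to turn the algebraic identity into a spectral condition on $T^*T$ and then match that condition against the two-circle description of $\partial\A_r$, using the spectral theorem for normal operators. Throughout, $T$ is assumed normal (this is part of the hypothesis), so I set $A=T^*T=TT^*$, a positive self-adjoint operator, and I use that $\partial\A_r=\{z:|z|=r\}\cup\{z:|z|=1\}$, so that a point $\lambda$ lies in $\partial\A_r$ precisely when $|\lambda|\in\{r,1\}$.

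First I would rewrite the target identity in functional-calculus form. Expanding, $(I-A)(A-r^2I)=g(A)$, where $g(x)=(1-x)(x-r^2)$ is a real polynomial whose zero set is exactly $\{r^2,1\}$. Since $A$ is self-adjoint, $g(A)$ is self-adjoint, and by the spectral mapping theorem $\sigma(g(A))=g(\sigma(A))$. A self-adjoint operator vanishes if and only if its spectrum is $\{0\}$ (its norm equals its spectral radius), so I obtain the key equivalence: $(I-T^*T)(T^*T-r^2I)=0$ holds if and only if $g(\sigma(A))=\{0\}$, i.e.\ if and only if $\sigma(T^*T)\subseteq\{r^2,1\}$.

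Next I would transfer this condition on $\sigma(T^*T)$ to a condition on $\sigma(T)$. Because $T$ is normal, the continuous function $z\mapsto|z|^2$ applied through the functional calculus yields $T^*T$, and the spectral mapping theorem for normal operators gives $\sigma(T^*T)=\{\,|\lambda|^2:\lambda\in\sigma(T)\,\}$. Therefore $\sigma(T^*T)\subseteq\{r^2,1\}$ if and only if $|\lambda|^2\in\{r^2,1\}$ for every $\lambda\in\sigma(T)$, which is exactly $\sigma(T)\subseteq\partial\A_r$. As $T$ is already normal, the last statement is precisely the assertion that $T$ is an $\A_r$-unitary, and chaining the two equivalences finishes the proof in both directions simultaneously.

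There is no serious obstacle here; the proof is essentially a bookkeeping of equivalences. The only two points requiring care are the step $g(A)=0\iff\sigma(A)\subseteq\{r^2,1\}$, which relies on self-adjointness so that vanishing is detected by the spectrum, and the identification $\sigma(T^*T)=\{|\lambda|^2:\lambda\in\sigma(T)\}$, which is legitimate because $T$ is normal and one may apply the continuous functional calculus to the (non-holomorphic but continuous) map $z\mapsto|z|^2$. For the forward implication alone one could alternatively argue directly from the spectral integral $T^*T=\int|z|^2\,dE(z)$, noting that $(1-|z|^2)(|z|^2-r^2)$ vanishes on $\sigma(T)\subseteq\partial\A_r$; but the functional-calculus chain above yields the full equivalence at once and is therefore the route I would take.
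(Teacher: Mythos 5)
Your proof is correct. The paper does not supply a proof of this proposition --- it is quoted verbatim from \cite{PalII} --- but your argument (rewrite the identity as $g(T^*T)=0$ for $g(x)=(1-x)(x-r^2)$, use self-adjointness and spectral mapping to get the equivalence with $\sigma(T^*T)\subseteq\{r^2,1\}$, then use normality and the continuous functional calculus for $z\mapsto|z|^2$ to translate this into $\sigma(T)\subseteq\partial\A_r$) is complete, and it is the standard route one would expect the cited source to take.
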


Next, we provide an algebraic characterization for $\A_r$-isometries.

\begin{thm}[\cite{Bello}, Theorem 2.8 \& \cite{PalII}, Theorem 3.6]\label{thm_A_iso}
Let $V$ be an invertible operator on a Hilbert space $\HS$. Then the following are equivalent:
\begin{enumerate}
\item[(i)] $V$ is an $\A_r$-isometry;
\item[(ii)] $-V^{*2}V^2+(1+r^2)V^*V-r^2I=0$, or equivalently $V^*V+(rV^{-1})^*rV^{-1}=(1+r^2)I$.
\end{enumerate}
\end{thm}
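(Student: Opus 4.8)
The plan is to prove the equivalence by routing through the map $\kappa$ and the operator theory of the biball recalled above. As a preliminary I would record that the two formulations in (ii) are equivalent for invertible $V$: conjugating $-V^{*2}V^2+(1+r^2)V^*V-r^2I=0$ on the left by $(V^*)^{-1}$ and on the right by $V^{-1}$ turns it into $V^*V+r^2(V^{-1})^*V^{-1}=(1+r^2)I$, and the computation is reversible. Hence I may work with the additive form $V^*V+(rV^{-1})^*(rV^{-1})=(1+r^2)I$, whose point is that, after dividing by $1+r^2$, it says exactly that $\kappa(V)$ is a spherical isometry.

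For (i) $\Rightarrow$ (ii) I would write $V=U|_{\HS}$ for an $\A_r$-unitary $U$ on some $\KS\supseteq\HS$ with $\HS$ invariant under $U$. Since $U$ is normal with $\sigma(U)\subseteq\partial\A_r$, Proposition \ref{prop_A_unit} gives $(I-U^*U)(U^*U-r^2I)=0$; expanding and using normality (so that $(U^*U)^2=U^{*2}U^2$) shows $U$ itself satisfies the identities in (ii), in particular the additive form $U^*U+r^2(U^{-1})^*U^{-1}=(1+r^2)I$ on $\KS$. The key elementary observation is that $\HS$ is invariant not only under $U$ but also under $U^{-1}$: for $h\in\HS$ the invertibility of $V$ gives $U(V^{-1}h)=V(V^{-1}h)=h$, whence $U^{-1}h=V^{-1}h\in\HS$. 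Compressing the additive identity to $\HS$ then works term by term, because the $(1,1)$ block of $U^*U$ is $V^*V$ (as $Uh=Vh\in\HS$) and that of $(U^{-1})^*U^{-1}$ is $(V^{-1})^*V^{-1}$, yielding precisely (ii).

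The substantial direction is (ii) $\Rightarrow$ (i). Assuming the additive identity, $\kappa(V)$ is a spherical isometry, hence a $\B_2$-isometry by Theorem \ref{thm_Bn_iso}; by Definition \ref{defn:new-T-01} it is therefore the restriction of a $\B_2$-unitary $(N_1,N_2)$ to the joint invariant subspace $\HS$, and passing to the smallest reducing subspace containing $\HS$ I may assume the extension is minimal. Applying Theorem \ref{thm_Bn_iso} again, $(N_1,N_2)$ is a spherical unitary. The crucial step, which I expect to be the main obstacle, is to show that this extension genuinely comes from $\kappa$ of an $\A_r$-unitary, i.e. that the defining relation of the principal variety propagates to $N_1N_2=\frac{r}{1+r^2}I$ on all of $\KS$. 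On $\HS$ this is immediate since $N_1N_2|_{\HS}=\frac{V}{\sqrt{1+r^2}}\cdot\frac{rV^{-1}}{\sqrt{1+r^2}}=\frac{r}{1+r^2}I_{\HS}$, so $\HS\subseteq\ker D$ for $D=N_1N_2-\frac{r}{1+r^2}I$. As $N_1,N_2$ are commuting normals, $D$ is normal and, by Fuglede's theorem, lies in the commutative algebra generated by $N_1,N_2,N_1^*,N_2^*$; hence $\ker D$ reduces the tuple, and minimality forces $\ker D=\KS$, that is $D=0$.

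Once $N_1N_2=\frac{r}{1+r^2}I$ is in hand I would set $U=\sqrt{1+r^2}\,N_1$. Then $N_1$ is invertible (inverse $\frac{1+r^2}{r}N_2$), $U$ is normal and invertible, $\kappa(U)=(N_1,N_2)$, and $U|_{\HS}=V$ with $\HS$ invariant under $U$. The spherical-unitary relation $N_1^*N_1+N_2^*N_2=I$ becomes $U^*U+r^2(U^*U)^{-1}=(1+r^2)I$, so by the spectral theorem every $t\in\sigma(U^*U)$ satisfies $t+r^2/t=1+r^2$, forcing $\sigma(U^*U)\subseteq\{r^2,1\}$ and hence $\sigma(U)\subseteq\partial\A_r$; thus $U$ is an $\A_r$-unitary extending $V$. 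Finally, since $\|V\|\le\|U\|=1$ and $\|V^{-1}\|\le\|U^{-1}\|=r^{-1}$, examining the spectra of $V$ and of $V^{-1}$ gives $r\le|\lambda|\le1$ for every $\lambda\in\sigma(V)$, so $\sigma(V)\subseteq\CA_r$ and $V$ is an $\A_r$-isometry. The delicate point throughout is the propagation of the variety relation in the $\ker D$ argument, since a priori a $\B_2$-unitary extension need not remember the constraint $z_1z_2=\frac{r}{1+r^2}$; it is minimality together with Fuglede's theorem that rescues it.
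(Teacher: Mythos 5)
Your proposal is correct, but note that the paper does not actually prove Theorem \ref{thm_A_iso}: it is imported from \cite{Bello} (Theorem 2.8) and \cite{PalII} (Theorem 3.6), so there is no in-paper proof to match against. What you have written is a legitimate self-contained proof modulo the two quoted ingredients (Athavale's Theorem \ref{thm_Bn_iso} and Proposition \ref{prop_A_unit}), and there is no circularity: your argument never invokes Proposition \ref{thm_A_iso_Ball} or anything downstream of Theorem \ref{thm_A_iso} itself. That said, your direction (ii) $\Rightarrow$ (i) is essentially a transcription of the converse half of the paper's proof of Proposition \ref{thm_A_iso_Ball}: there too the $\B_2$-unitary extension $(N_1,N_2)$ is cut down to the minimal reducing subspace containing $\HS$, the relation $N_1N_2=\tfrac{r}{1+r^2}I$ is propagated from $\HS$ to the whole space using commutativity of the normal tuple (the paper does this on the spanning vectors $N_1^{*n}N_2^{*m}h$, you do it via the reducing subspace $\ker(N_1N_2-\tfrac{r}{1+r^2}I)$ and Fuglede--Putnam --- the same mechanism), and $U=\sqrt{1+r^2}\,N_1$ is shown to be an $\A_r$-unitary extending $V$. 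Your direction (i) $\Rightarrow$ (ii) is a clean direct compression argument (the key point being that invertibility of $V$ forces $U^{-1}|_\HS=V^{-1}$, so both terms of the additive identity compress correctly), which is arguably more elementary than routing the forward implication through the ball as the paper does in Proposition \ref{thm_A_iso_Ball}. Two cosmetic imprecisions, neither a gap: $\|U\|$ and $\|U^{-1}\|$ should be asserted as $\leq 1$ and $\leq r^{-1}$ rather than equalities (the spectrum of $U$ need not meet both circles), and the final spectral containment $\sigma(V)\subseteq\CA_r$ follows from the spectral radius bounds for $V$ and $V^{-1}$ exactly as you indicate.
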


Now we are in a position to begin our investigation on relation between $\A_r$-contractions and $\B_2$-contractions. We start with a lemma that sets a connection between the domains $\A_r$ and $\B_2$. 

\begin{lem}\label{lem_pi}
The map given by
 \[
 \kappa: \C\setminus \{0\} \to \C^2, \quad \kappa(z)=\left(\frac{z}{\sqrt{1+r^2}}, \frac{rz^{-1}}{\sqrt{1+r^2}} \right)
 \]
is a holomorphic closed map such that
\begin{align*}
(i)\ \kappa(\CA_r)=Z(q) \cap \overline{\B}_2 \quad (ii) \ \kappa(\A_r)= Z(q) \cap \B_2 \quad (iii)\ \kappa(\partial \CA_r)=Z(q) \cap \partial \B_2.
\end{align*}
\end{lem}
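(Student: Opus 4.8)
The plan is to verify the three set equalities by direct computation, establishing first that $\kappa$ maps into the zero variety $Z(q)$, and then analyzing how the modulus constraint $|z_1|^2 + |z_2|^2$ corresponds to a condition on $|z|$ for $z \in \C\setminus\{0\}$. First I would check that $\kappa(z) \in Z(q)$ for every $z \neq 0$: writing $\kappa(z) = (z_1, z_2)$ with $z_1 = z/\sqrt{1+r^2}$ and $z_2 = rz^{-1}/\sqrt{1+r^2}$, we compute $z_1 z_2 = r/(1+r^2) = q$'s constant term, so indeed $q(\kappa(z)) = z_1z_2 - r/(1+r^2) = 0$. This shows $\kappa(\C\setminus\{0\}) \subseteq Z(q)$, and in fact $\kappa$ surjects onto $Z(q) \setminus (\{z_1 = 0\} \cup \{z_2=0\})$, though since $r>0$ the variety $Z(q)$ has no points with a zero coordinate anyway.

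The heart of the argument is the modulus computation. For $z \in \C \setminus\{0\}$, write $t = |z|^2 > 0$. Then
\[
|z_1|^2 + |z_2|^2 = \frac{|z|^2}{1+r^2} + \frac{r^2 |z|^{-2}}{1+r^2} = \frac{1}{1+r^2}\left( t + \frac{r^2}{t}\right).
\]
I would analyze the scalar function $g(t) = \frac{1}{1+r^2}(t + r^2/t)$ on $(0,\infty)$. Setting $g(t) = 1$ gives $t^2 - (1+r^2)t + r^2 = 0$, i.e. $(t-1)(t-r^2) = 0$, so $g(t) = 1$ exactly when $t = 1$ or $t = r^2$, that is when $|z| = 1$ or $|z| = r$. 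Since $g$ is convex with minimum at $t = r$ (where $g(r) = 2r/(1+r^2) \leq 1$) and tends to $+\infty$ at both ends, one sees that $g(t) \leq 1$ precisely for $t \in [r^2, 1]$, equivalently $r \leq |z| \leq 1$, with equality $g(t) = 1$ exactly on the boundary $|z| \in \{r,1\}$ and strict inequality $g(t) < 1$ for $r < |z| < 1$. This single monotonicity-and-roots analysis delivers all three equalities simultaneously: $\kappa(z) \in \overline{\B}_2 \iff z \in \CA_r$ gives (i), $\kappa(z) \in \B_2 \iff z \in \A_r$ gives (ii), and $\kappa(z) \in \partial \B_2 \iff |z| \in \{r,1\} \iff z \in \partial\CA_r$ gives (iii).

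For the topological claims, holomorphicity is immediate since each component of $\kappa$ is a Laurent monomial in $z$, holomorphic on $\C \setminus \{0\}$. To show $\kappa$ is a closed map, I would argue that $\kappa$ is proper: if $z_n \to 0$ or $|z_n| \to \infty$ then $|z_2|$ or $|z_1|$ respectively blows up, so $\kappa$ maps sets escaping to the boundary of $\C\setminus\{0\}$ to sets escaping to infinity in $\C^2$; a continuous proper map between locally compact Hausdorff spaces is closed. Alternatively, since I have identified $\kappa(\CA_r)$ explicitly as the compact set $Z(q)\cap\overline{\B}_2$, closedness on the relevant domain follows directly. The main obstacle I anticipate is not any single step but rather ensuring the modulus analysis is organized cleanly enough that the factorization $(t-1)(t-r^2)$ and the sign of $g-1$ are handled once and reused for all three parts, rather than repeating the inequality analysis three times; getting the strict-versus-nonstrict boundary cases exactly right is the only place where care is genuinely needed.
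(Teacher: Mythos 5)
Your proposal is correct and follows essentially the same route as the paper: the set equalities rest on the same factorization $|z|^4-(1+r^2)|z|^2+r^2=(|z|^2-r^2)(|z|^2-1)\le 0$, and your properness argument for closedness is just an abstract packaging of the paper's direct sequential argument (both hinge on the observation that convergence of $\kappa(y_n)$ forces $y_n$ to converge to a nonzero limit, since $z_1z_2=r/(1+r^2)\neq 0$ on the image). Your explicit remark that $Z(q)$ contains no points with a vanishing coordinate, so that surjectivity onto $Z(q)\cap\overline{\B}_2$ is automatic, is a small point the paper leaves implicit.
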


\begin{proof}
It is evident that $\kappa$ is a holomorphic map. Let $Y$ be a closed subset of $\C \setminus \{0\}$. Then $Y=C \cap (\C \setminus \{0\})=C \setminus \{0\}$ for some closed subset $C$ in $\C$. We show that $\kappa(Y)$ is a closed subset of $\C^2$. Take a sequence $\{y_n\}$ in $Y$ such that
\[
\kappa(y_n)=\left(\frac{y_n}{\sqrt{1+r^2}}, \frac{ry_n^{-1}}{\sqrt{1+r^2}} \right)
\]
converges to $(\alpha_1, \alpha_2)$ as $n \to \infty$. Then 
\[
\alpha_1\alpha_2=\lim_{n \to \infty}\frac{y_n}{\sqrt{1+r^2}}\frac{ry_n^{-1}}{\sqrt{1+r^2}}=\frac{r}{1+r^2} \quad \text{and so,} \quad \kappa\left(\alpha_1\sqrt{1+r^2}\right)=\left(\alpha_1, \frac{r}{(1+r^2)\alpha_1}\right)=(\alpha_1, \alpha_2).
\] 
Since $Y$ is closed and $\{y_n\} \subset Y$ converges to $\alpha_1\sqrt{1+r^2}$, we have that $(\alpha_1, \alpha_2) \in \kappa(Y)$ and thus, $\kappa(Y)$ is a closed subset in $\C^2$. Consequently, $\kappa$ is a closed map. It is easy to see that
\begin{equation*}
\begin{split}
\left(\frac{z}{\sqrt{1+r^2}}, \frac{rz^{-1}}{\sqrt{1+r^2}} \right) \in \overline{\B}_2 \iff |z|^2+r^2|z|^{-2} \leq 1+r^2
 & \iff |z|^4-(1+r^2)|z|^2+r^2 \leq 0\\
 &\iff(|z|^2-r^2)(|z|^2-1) \leq 0\\
 & \iff z \in \CA_r.
\end{split}
\end{equation*}
Thus, $\kappa(\CA_r)=Z(q) \cap \overline{\B}_2$ and the other parts of the theorem follow similarly.
\end{proof}

In view of Lemma \ref{lem_pi}, it is natural to ask if the above result can be lifted to the level of operators, which is to say that for an $\A_r$-contraction $T$ if $Z(q) \cap \BC$ is a spectral set for $\kappa(T)$ and vice-versa. We show here that the answer to this is affirmative. Let us start with a proposition.

\begin{prop}\label{prop_A_uni_Ball}
Let $U$ be an invertible operator acting on a Hilbert space $\HS$ and let $q$ be the polynomial as in $(\ref{eqn:new-T-001})$. Then the following are equivalent:
\begin{enumerate}
\item $U$ is an $\A_r$-unitary; 
\item $\kappa(U)$ is a $\B_2$-unitary;
\item $U$ is normal and $Z(q) \cap \partial \B_2$ is a spectral set for $\kappa(U)$.  
\end{enumerate}

\end{prop}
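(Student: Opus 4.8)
The plan is to prove the three statements equivalent by establishing the cycle $(1)\Rightarrow(2)\Rightarrow(3)\Rightarrow(1)$, the point being that each operator-theoretic condition collapses to a single algebraic identity in the positive invertible operator $P:=U^*U$, which is then transported between $\C\setminus\{0\}$ and $\C^2$ through the map $\kappa$.

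For $(1)\Leftrightarrow(2)$ I would exploit that the two conditions are really one reversible identity. An $\A_r$-unitary $U$ is normal and, by Proposition \ref{prop_A_unit}, satisfies $(I-U^*U)(U^*U-r^2I)=0$, i.e. $P^2-(1+r^2)P+r^2I=0$. The components $U/\sqrt{1+r^2}$ and $rU^{-1}/\sqrt{1+r^2}$ of $\kappa(U)$ commute and are normal precisely because $U$ is, and the central computation is
\[
\left(\tfrac{U}{\sqrt{1+r^2}}\right)^*\tfrac{U}{\sqrt{1+r^2}}+\left(\tfrac{rU^{-1}}{\sqrt{1+r^2}}\right)^*\tfrac{rU^{-1}}{\sqrt{1+r^2}}=\frac{P+r^2P^{-1}}{1+r^2},
\]
where normality is used in the form $(U^{-1})^*U^{-1}=(U^*U)^{-1}=P^{-1}$. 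Multiplying $P^2-(1+r^2)P+r^2I=0$ by $P^{-1}$ shows this sum equals $I$, so $\kappa(U)$ is a spherical unitary and hence a $\B_2$-unitary by Theorem \ref{thm_Bn_iso}. Running the same computation backwards (multiplying by $P$ instead) gives $(2)\Rightarrow(1)$, since a $\B_2$-unitary $\kappa(U)$ forces $U$ normal and $P+r^2P^{-1}=(1+r^2)I$, which is exactly the condition of Proposition \ref{prop_A_unit}.

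For $(2)\Rightarrow(3)$: a $\B_2$-unitary has commuting normal components, so $U$ is normal, and its joint spectrum satisfies $\sigma_T(\kappa(U))\subseteq b\overline{\B}_2=\partial\B_2$. Since $q(\kappa(z))\equiv 0$, the joint spectrum automatically lies in $Z(q)$ as well, whence $\sigma_T(\kappa(U))\subseteq Z(q)\cap\partial\B_2$. The von Neumann inequality over $Z(q)\cap\partial\B_2$ then comes for free from the spectral theorem for commuting normal tuples: $\|f(\kappa(U))\|=\sup_{\xi\in\sigma_T(\kappa(U))}|f(\xi)|\le\|f\|_{\infty,\,Z(q)\cap\partial\B_2}$ for every $f\in Rat(Z(q)\cap\partial\B_2)$, so $Z(q)\cap\partial\B_2$ is a spectral set and $(3)$ holds. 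For $(3)\Rightarrow(1)$ I would use only the spectrum-containment half of the definition of a spectral set together with normality: Taylor's spectral mapping theorem (or the spectral theorem directly) gives $\sigma_T(\kappa(U))=\kappa(\sigma(U))$, and the hypothesis forces $\kappa(\sigma(U))\subseteq Z(q)\cap\partial\B_2=\kappa(\partial\CA_r)$ by Lemma \ref{lem_pi}(iii). Because $\kappa$ is injective on $\C\setminus\{0\}$ (one recovers $z=\alpha_1\sqrt{1+r^2}$ from $\kappa(z)=(\alpha_1,\alpha_2)$), this yields $\sigma(U)\subseteq\partial\CA_r$, and a normal operator with spectrum in $\partial\A_r$ is by definition an $\A_r$-unitary.

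I expect the main obstacle to be the spectral-mapping step in $(3)\Rightarrow(1)$ — justifying $\sigma_T(\kappa(U))=\kappa(\sigma(U))$ for the Taylor joint spectrum and then pulling the containment back through the injectivity of $\kappa$ — together with the bookkeeping that normality of $U$ is exactly what turns $(U^{-1})^*U^{-1}$ into $(U^*U)^{-1}$, the hinge that makes the $\A_r$-unitary identity and the spherical-unitary identity coincide. Everything else is reversible algebra and a single appeal to the spectral theorem.
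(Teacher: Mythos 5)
Your proposal is correct and follows essentially the same route as the paper: the equivalence $(1)\Leftrightarrow(2)$ rests on exactly the same identity $U^*U+r^2U^{-*}U^{-1}=(1+r^2)I$ obtained from Proposition \ref{prop_A_unit} and Theorem \ref{thm_Bn_iso}, and $(2)\Rightarrow(3)$ is the same spectral-theorem argument for commuting normal tuples. The only cosmetic difference is that you close the cycle with $(3)\Rightarrow(1)$ by pulling the spectrum back through the injectivity of $\kappa$ and Lemma \ref{lem_pi}(iii), whereas the paper proves $(3)\Rightarrow(2)$ by observing that $\partial\B_2$, as a compact superset of a spectral set, is itself a spectral set; both are valid and use the same ingredients.
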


\begin{proof}
We shall prove $(1) \iff (2) \iff (3)$.

\smallskip

\noindent $(1) \iff (2)$. We have by Theorem \ref{thm_Bn_iso} and Proposition \ref{prop_A_unit} that
\begin{equation*}
\begin{split}
\text{ $U$ is an $\A_r$-unitary}
& \iff \text{ $U$ is normal and} \ (I-U^*U)(U^*U-r^2I)=0 \\
& \iff \text{ $U$ is normal and} \ U^*U+r^2U^{-*}U^{-1}=(1+r^2)I\\
& \iff \kappa(U)=\left(\frac{U}{\sqrt{1+r^2}}, \frac{rU^{-1}}{\sqrt{1+r^2}} \right) \ \mbox{is a $\B_2$-unitary}.
\end{split}
\end{equation*}

\noindent $(2) \implies (3)$. Let $\kappa(U)$ be a $\B_2$-unitary. Then $\sigma_T(\kappa(U)) \subseteq \partial\B_2$ and by spectral mapping principle, $\sigma_T(\kappa(U))=\kappa(\sigma(U)) \subseteq Z(q)$. Thus, $\sigma_T(\kappa(U)) \subseteq Z(q) \cap \partial \B_2$. Since $\kappa(U)$ is a commuting pair of normal operators, we have that any compact set containing $\sigma_T(\kappa(U))$ is a spectral for $\kappa(U)$. Hence, $Z(q) \cap \partial \B_2$ is a spectral set for $\kappa(U)$.

\smallskip 

\noindent $(3) \implies (2)$. Suppose that $U$ is normal and $Z(q) \cap \partial \B_2$ is a spectral set for $\kappa(U)$. Consequently, $\kappa(U)$ is a commuting pair of normal operators and $\partial \B_2$ is a spectral set for $\kappa(U)$. The latter holds, because any compact superset containing a spectral set is also a spectral set. Hence, $\kappa(U)$ is a $\B_2$-unitary and this completes the proof. 
\end{proof}

A natural analogue of the preceding result holds for $\A_r$-isometry versus $\B_2$-isometry.

\begin{prop}\label{thm_A_iso_Ball}
Let $V$ be an invertible operator acting on a Hilbert space $\HS$. Then $V$ is an $\A_r$-isometry if and only if $\kappa(V)$ is a $\B_2$-isometry.
\end{prop} 

\begin{proof}
Let us denote
\[
(V_1, V_2)=\kappa(V)=\left(\frac{V}{\sqrt{1+r^2}}, \frac{rV^{-1}}{\sqrt{1+r^2}} \right).
\]
If $V$ is an $\A_r$-isometry, then it follows from Theorems \ref{thm_Bn_iso} and \ref{thm_A_iso} that $\kappa(V)$ is a $\B_2$-isometry. Conversely, let $\kappa(V)$ be a $\B_2$-isometry. Then there exist a space $\mathcal{K}$ containing $\HS$ and a $\B_2$-unitary $(N_1, N_2)$ on $\mathcal{K}$ such that $(N_1|_{\HS}, N_2|_{\HS})=(V_1, V_2)$. Define
\[
\mathcal{L}:=\overline{span}\{N_1^{*n}N_2^{*m}k : \ \ k\in \mathcal K, n , m \in \mathbb{N} \cup \{0\} \}.
\]
Evidently, $\mathcal L$ is a closed joint reducing subspace for $N_1$ and $N_2$. Set $(U_1, U_2)=(N_1|_{\mathcal{L}}, N_2|_{\mathcal{L}})$, which is the minimal normal extension of $(V_1, V_2)$. By Theorem  \ref{thm_Bn_iso}, $N_1^*N_1+N_2^*N_2=I$. Thus, $(U_1|_{\HS}, U_2|_{\HS})=(V_1, V_2)$ and $U_1^*U_1+U_2^*U_2=I$. For any $h \in \HS$, we have that 
\[
U_1U_2h=V_1V_2h=\frac{r}{1+r^2}h \quad \text{and so,} \quad U_1U_2(N_1^{*n}N_2^{*m}h)=N_1^{*n}N_2^{*m}U_1U_2h=\frac{r}{1+r^2}(N_1^{*n}N_2^{*m}h)
\]
for $m, n \in \mathbb{N} \cup \{0\}$. By continuity arguments, $U_1U_2=\frac{r}{1+r^2}I$. Define $U=\sqrt{1+r^2}U_1$ on $\mathcal{L}$. Consequently, we have that $V=U|_{\HS}$ and $U^*U+r^2U^{-*}U^{-1}=(1+r^2)I$. By Proposition \ref{prop_A_unit}, $U$ is an $\A_r$-unitary. If $z \in \sigma(V)$, then $\kappa(z) \in \kappa(\sigma(V))=\sigma_T(\kappa(V)) \subseteq \BC$. It follows from Lemma \ref{lem_pi} that $z \in \CA_r$ and so, $\sigma(V) \subseteq \CA_r$. Thus, $V$ is an $\A_r$-isometry which completes the proof.
\end{proof}

As an application of the above proposition, below we give an alternative proof to the Wold decomposition of an $\A_r$-isometry that appears as Theorem 3.13 in \cite{PalII}.

\begin{thm}\label{thm_Wold}
	Let $V$ be an $\A_r$-isometry on a Hilbert space $\mathcal{H}$. Then there exists a decomposition of $\mathcal{H}$ into an orthogonal sum of two closed subspaces reducing $V$, say $\mathcal{H}= \mathcal{H}_{1} \oplus \mathcal{H}_{2},$ such that $V|_{\mathcal{H}_{1}}$ is an $\A_r$-unitary and $V|_{\mathcal{H}_{2}}$ is a pure $\A_r$-isometry. The space $\mathcal{H}_{1}$ or $\mathcal{H}_{2}$ may equal the trivial subspace $\{0\}$.
\end{thm}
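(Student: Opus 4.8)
The plan is to transfer the whole problem to the biball via the map $\kappa$, invoke Eschmeier's orthogonal decomposition of a $\B_2$-contraction (Theorem \ref{thm_decomp-Ball}), and then pull the resulting splitting back to $V$. First I would observe that by Proposition \ref{thm_A_iso_Ball} the induced pair $\kappa(V)=(V_1,V_2)$, with $V_1=V/\sqrt{1+r^2}$ and $V_2=rV^{-1}/\sqrt{1+r^2}$, is a $\B_2$-isometry and hence a $\B_2$-contraction (being the restriction of a $\B_2$-unitary, it has $\overline{\B}_2$ as a spectral set). Applying Theorem \ref{thm_decomp-Ball} to $\kappa(V)$ then yields an orthogonal decomposition $\mathcal H=\mathcal H_u\oplus \mathcal H_c$ into joint reducing subspaces of $V_1,V_2$ on which $\kappa(V)$ splits as a $\B_2$-unitary summand $(V_1|_{\mathcal H_u},V_2|_{\mathcal H_u})$ and a c.n.u.\ $\B_2$-contraction summand $(V_1|_{\mathcal H_c},V_2|_{\mathcal H_c})$. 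Since $V_1$ is a scalar multiple of $V$, a subspace reduces $V_1$ precisely when it reduces $V$; thus $\mathcal H_u$ and $\mathcal H_c$ both reduce $V$, and I would take $\mathcal H_1=\mathcal H_u$ and $\mathcal H_2=\mathcal H_c$.

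Next I would identify each summand. Because $\mathcal H_u$ reduces the invertible operator $V$, the restriction $V|_{\mathcal H_u}$ is invertible and $\kappa(V|_{\mathcal H_u})=(V_1|_{\mathcal H_u},V_2|_{\mathcal H_u})$ is a $\B_2$-unitary; Proposition \ref{prop_A_uni_Ball} then gives that $V|_{\mathcal H_u}$ is an $\A_r$-unitary. For the other summand, restricting the algebraic identity $-V^{*2}V^2+(1+r^2)V^*V-r^2I=0$ of Theorem \ref{thm_A_iso} to the reducing subspace $\mathcal H_c$ (together with the invertibility of $V|_{\mathcal H_c}$) shows, again by Theorem \ref{thm_A_iso}, that $V|_{\mathcal H_c}$ is an $\A_r$-isometry.

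To complete the argument I would verify purity of $V|_{\mathcal H_c}$ by contradiction. If some nonzero subspace $\mathcal M\subseteq \mathcal H_c$ reduced $V|_{\mathcal H_c}$ with $V|_{\mathcal M}$ an $\A_r$-unitary, then $\mathcal M$ would reduce the pair $\kappa(V)$ and, by Proposition \ref{prop_A_uni_Ball}, $(V_1|_{\mathcal M},V_2|_{\mathcal M})$ would be a $\B_2$-unitary. This contradicts the fact that $(V_1|_{\mathcal H_c},V_2|_{\mathcal H_c})$ is completely non-unitary, so no such $\mathcal M$ exists and $V|_{\mathcal H_c}$ is a pure $\A_r$-isometry. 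The possibility that either $\mathcal H_u$ or $\mathcal H_c$ is $\{0\}$ is allowed by the statement, corresponding to $V$ being already a pure $\A_r$-isometry or an $\A_r$-unitary.

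The main obstacle I anticipate is the bookkeeping of reducing subspaces: I must be careful that the joint reducing subspaces produced by Eschmeier's decomposition for the pair $(V_1,V_2)$ genuinely reduce the single operator $V$, and conversely that a reducing subspace for $V$ is a joint reducing subspace for $(V_1,V_2)$, so that the notions ``pure $\A_r$-isometry'' and ``c.n.u.\ $\B_2$-contraction'' correspond exactly under $\kappa$. Both inclusions are immediate from $V_1=V/\sqrt{1+r^2}$ and $V_2=rV^{-1}/\sqrt{1+r^2}$, after which everything else is a routine translation through Propositions \ref{prop_A_uni_Ball} and \ref{thm_A_iso_Ball}.
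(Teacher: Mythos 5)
Your proposal is correct and follows essentially the same route as the paper: transfer via $\kappa$ using Proposition \ref{thm_A_iso_Ball}, apply Eschmeier's decomposition (Theorem \ref{thm_decomp-Ball}), identify the unitary part via Proposition \ref{prop_A_uni_Ball}, and establish purity by the same contradiction argument. The only (harmless) additions are your explicit check that reducing subspaces of $V$ and joint reducing subspaces of $\kappa(V)$ coincide, and your verification via Theorem \ref{thm_A_iso} that $V|_{\mathcal H_2}$ is an $\A_r$-isometry, both of which the paper treats as immediate.
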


\begin{proof}
Let $(V_1, V_2)=\kappa(V)$. We have by Proposition \ref{thm_A_iso_Ball} that $\kappa(V)$ is a $\B_2$-isometry. It follows from Theorem \ref{thm_decomp-Ball} that there is an orthogonal decomposition of $\HS$ into joint reducing subspaces $\HS_1$ and $\HS_2$ of $(V_1, V_2)$ such that $(V_1|_{\HS_1}, V_2|_{\HS_1})$ is a $\B_2$-unitary and $(V_1|_{\HS_2}, V_2|_{\HS_2})$ is a c.n.u. $\B_2$-isometry. By Proposition \ref{prop_A_uni_Ball}, $V|_{\HS_1}$ is an $\A_r$-unitary. Let $\mathcal{L} \subseteq \HS_2$ be a closed reducing subspace of $V$ such that $V|_{\mathcal{L}}$ is an $\A_r$-unitary. Again by Proposition \ref{prop_A_uni_Ball}, $(V_1|_{\mathcal{L}}, V_2|_{\mathcal{L}})$ is a $\B_2$-unitary. Thus, $\mathcal{L}=\{0\}$ since $(V_1|_{\HS_2}, V_2|_{\HS_2})$ is a c.n.u. $\B_2$-isometry. Hence, $V|_{\HS_2}$ is a pure $\A_r$-isometry.
\end{proof}

We now prove a main result of this section and for this we need an elementary lemma whose proof is a routine exercise. However, we include a short proof here for the sake of completeness.

	\begin{lem}\label{lem_pconvex} 
		Let $K$ be a polynomially convex set in $\C^n$ and let $p \in \C[z_1, \dotsc, z_n]$ be such that $Z(p) \cap K \ne \emptyset$. Then $Z(p) \cap K$ is a polynomially convex set in $\C^n$. 
	\end{lem}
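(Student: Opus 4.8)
The plan is to recall the definition of polynomial convexity and reduce the claim to a statement about polynomial hulls. Recall that a compact set $E \subset \C^n$ is \emph{polynomially convex} if $E$ equals its polynomially convex hull
\[
\widehat{E}=\left\{w \in \C^n \ : \ |p(w)| \leq \sup_{z \in E}|p(z)| \ \text{ for all } p \in \C[z_1, \dots, z_n]\right\}.
\]
Thus the hypothesis is $\widehat{K}=K$, and the goal is to show $\widehat{Z(p)\cap K}=Z(p)\cap K$. Since the inclusion $Z(p)\cap K \subseteq \widehat{Z(p)\cap K}$ is automatic, the content is the reverse inclusion: every point of the hull of $Z(p)\cap K$ already lies in $Z(p)\cap K$.

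First I would take an arbitrary point $w \in \widehat{Z(p)\cap K}$ and show it lies in $K$ and that $p(w)=0$. For membership in $K$: since $Z(p)\cap K \subseteq K$, monotonicity of the hull gives $\widehat{Z(p)\cap K} \subseteq \widehat{K}=K$, where the last equality is exactly the polynomial convexity of $K$. So $w \in K$ with no extra work. The only remaining task is to show $p(w)=0$. This is where the polynomial $p$ itself enters as the key test function: for every point $z \in Z(p)\cap K$ we have $p(z)=0$, hence $\sup_{z \in Z(p)\cap K}|p(z)|=0$. By the defining inequality of the hull applied to the polynomial $p$, we get $|p(w)| \leq 0$, forcing $p(w)=0$, i.e.\ $w \in Z(p)$.

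Combining the two conclusions gives $w \in Z(p)\cap K$, which establishes $\widehat{Z(p)\cap K}\subseteq Z(p)\cap K$ and hence equality. The nonemptiness hypothesis $Z(p)\cap K \ne \emptyset$ is needed only so that $Z(p)\cap K$ is a genuine compact set to which the notion of polynomial convexity applies (and so that the supremum is over a nonempty set). I do not anticipate a genuine obstacle here: the proof is a two-line argument once the hull formalism is set up, with the single clever point being to feed the defining polynomial $p$ into the hull inequality. The main thing to be careful about is simply recording both facts — that the hull sits inside $K$ by monotonicity and polynomial convexity of $K$, and that it sits inside $Z(p)$ by testing against $p$ — and then intersecting them.
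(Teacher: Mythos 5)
Your proof is correct and is essentially the same as the paper's: both arguments rest on the two observations that polynomial convexity of $K$ handles points outside $K$, and that the polynomial $p$ itself serves as the separating (test) function for points of $K$ not in $Z(p)$. The only difference is presentational — you phrase it as an inclusion of hulls, while the paper argues contrapositively by separating each point $x \notin Z(p)\cap K$ with an explicit polynomial.
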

	
\begin{proof} Let $X=Z(p) \cap K$. To prove $X$ is polynomially convex, it suffices to show that for any $x \notin X$, there exists a polynomial $f \in \C[z_1, \dotsc, z_n]$ such that $\|f\|_{\infty, X}<|f(x)|$. Take any $x \notin X$. We discuss the two cases here depending on whether $x$ lies in $K$ or not. Let $x \notin K$. Then there exists $f \in \C[z_1, \dotsc, z_n]$ such that $\|f\|_{\infty, K} <|f(x)|$ since $K$ is polynomially convex. Thus $\|f\|_{\infty, X} \leq \|f\|_{\infty, K} <|f(x)|$. For $x \in K$, we have $x \notin Z(p)$ and so, $|p(x)|>0=\|p\|_{\infty, X}$ and we are done.	\end{proof}

\begin{thm}\label{thm_A_con_Ball}
Let $T$ be an invertible operator acting on a Hilbert space $\HS$. Then $T$ is an $\A_r$-contraction if and only if $Z(q) \cap \BC$ is a spectral set for $\kappa(T)$, where $q$ is as in $(\ref{eqn:new-T-001})$.
\end{thm}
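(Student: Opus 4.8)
The plan is to prove both implications by transferring von Neumann's inequality across the biholomorphic correspondence $z \leftrightarrow \kappa(z)$ supplied by Lemma \ref{lem_pi}, exploiting that $Z(q)\cap\BC$ is polynomially convex so that, via the Oka-Weil theorem, the spectral-set condition on $Z(q)\cap\BC$ may be tested against scalar polynomials alone. First I would dispose of the spectrum condition once and for all. Since $T$ is invertible, $\kappa$ is holomorphic on a neighbourhood of $\sigma(T)$, so the spectral mapping theorem gives $\sigma_T(\kappa(T))=\kappa(\sigma(T))$; as $\kappa$ is injective with $\kappa(\CA_r)=Z(q)\cap\BC$, this yields $\sigma(T)\subseteq\CA_r \iff \sigma_T(\kappa(T))\subseteq Z(q)\cap\BC$, settling the spectral inclusion in both directions simultaneously and leaving only the norm inequalities.

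For the forward implication, assume $T$ is an $\A_r$-contraction. The key observation is that for any polynomial $p\in\C[z_1,z_2]$, the composite $p\circ\kappa$ is a Laurent polynomial in the single variable $z$ whose only possible pole sits at $z=0\notin\CA_r$; hence $p\circ\kappa\in Rat(\CA_r)$, and $p(\kappa(T))=(p\circ\kappa)(T)$ by compatibility of the polynomial and single-variable rational functional calculi (both amount to substituting $T$ and $T^{-1}$). Then
\begin{equation*}
\|p(\kappa(T))\|=\|(p\circ\kappa)(T)\|\le\|p\circ\kappa\|_{\infty,\CA_r}=\|p\|_{\infty,\kappa(\CA_r)}=\|p\|_{\infty,\,Z(q)\cap\BC},
\end{equation*}
using $\kappa(\CA_r)=Z(q)\cap\BC$ and the bijectivity of $\kappa$. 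Because $Z(q)\cap\BC$ is polynomially convex (Lemma \ref{lem_pconvex}, as $\BC$ is convex), Oka-Weil lets me approximate any $f\in Rat(Z(q)\cap\BC)$ uniformly on a neighbourhood by polynomials; continuity of the functional calculus on $\sigma_T(\kappa(T))\subseteq Z(q)\cap\BC$ then upgrades the polynomial estimate to all of $Rat(Z(q)\cap\BC)$, so $Z(q)\cap\BC$ is a spectral set for $\kappa(T)$.

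For the converse, assume $Z(q)\cap\BC$ is a spectral set for $\kappa(T)$. Given $f\in Rat(\CA_r)$, I would pull it back along $\kappa^{-1}$: setting $\tilde f(w_1,w_2):=f\!\left(\sqrt{1+r^2}\,w_1\right)$ produces a rational function of $w_1$ whose denominator cannot vanish on $Z(q)\cap\BC=\kappa(\CA_r)$, since its zeros correspond under $\kappa^{-1}$ to poles of $f$, which lie off $\CA_r$. Thus $\tilde f\in Rat(Z(q)\cap\BC)$ and $\tilde f(\kappa(T))=f(T)$, whence
\begin{equation*}
\|f(T)\|=\|\tilde f(\kappa(T))\|\le\|\tilde f\|_{\infty,\,Z(q)\cap\BC}=\|f\|_{\infty,\CA_r},
\end{equation*}
which, together with the spectrum step, shows $T$ is an $\A_r$-contraction.

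I expect the only genuinely delicate point to be the functional-calculus bookkeeping: verifying that the composed functions really belong to the claimed rational algebras (tracking the spurious pole at $z=0$ for $p\circ\kappa$ and the non-vanishing of the denominator of $\tilde f$), and justifying the identities $p(\kappa(T))=(p\circ\kappa)(T)$ and $\tilde f(\kappa(T))=f(T)$ together with the Oka-Weil limiting argument. These are exactly the steps where polynomial convexity of $Z(q)\cap\BC$ — the whole advantage of the principal variety over $\CA_r$ — does the real work.
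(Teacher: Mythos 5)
Your proposal is correct and follows essentially the same route as the paper: the forward direction (spectral mapping, polynomial convexity of $Z(q)\cap\BC$ via Lemma \ref{lem_pconvex}, Oka--Weil reduction to polynomials, and the identity $p(\kappa(T))=(p\circ\kappa)(T)$) is the paper's argument verbatim in substance. Your converse is a slight streamlining: the paper tests against Laurent polynomials written as $f_1(z_1)+f_2(z_2)\circ\kappa$ and then invokes density of Laurent polynomials in $Rat(\CA_r)$, whereas you pull back an arbitrary $f\in Rat(\CA_r)$ directly to $\tilde f(w_1,w_2)=f(\sqrt{1+r^2}\,w_1)\in Rat(Z(q)\cap\BC)$, which avoids the density step and is equally valid.
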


\begin{proof}
Set  
\[
(T_1, T_2)=\kappa(T)=\left(\frac{T}{\sqrt{1+r^2}}, \frac{rT^{-1}}{\sqrt{1+r^2}} \right).
\]
Let $T$ be an $\A_r$-contraction and let $(z_1, z_2) \in \sigma_T(T_1, T_2)$. By spectral mapping theorem and Lemma \ref{lem_pi}, $(z_1, z_2) \in \sigma_T(\kappa(T))=\kappa(\sigma(T)) \subseteq \kappa(\CA_r)=Z(q) \cap \BC$. Thus, $\sigma_T(T_1, T_2) \subseteq Z(q) \cap \BC$. Now $\BC$ is convex and hence, is polynomially convex. So, it follows from Lemma \ref{lem_pconvex} that $Z(q) \cap \BC$ is polynomially convex. By Oka-Weil theorem (see CH-7 of \cite{Oka-Weil}), every holomorphic function on $Z(q) \cap \BC$ can be uniformly approximated by   a sequence of polynomials over $Z(q) \cap \BC$. Thus, it suffices to prove the von Neumann's inequality only for the polynomials. Let $f \in \C[z_1, z_2]$. Then, the fact that $T$ is an $\A_r$-contraction along with Lemma \ref{lem_pi} implies that 
\begin{equation*}
\begin{split}
\|f(T_1, T_2)\|  =\|f \circ \kappa(T) \| 
& \leq \sup \{|f\circ \kappa(z)| : z \in \CA_r \} \\
& \leq \sup\{|f(z_1, z_2)| : (z_1, z_2) \in Z(q) \cap \BC\}
\end{split}
\end{equation*}
and so, $(T_1, T_2)$ has the principal variety $Z(q) \cap \BC$ as a spectral set. Conversely, assume that $Z(q) \cap \BC$ is a spectral set for $\kappa(T)=(T_1, T_2)$. Let $z \in \sigma(T)$. Again, by spectral mapping theorem, we have that $\kappa(z) \in \kappa(\sigma(T))= \sigma_T(\kappa(T))\subseteq Z(q) \cap \BC$. By Lemma \ref{lem_pi}, $z \in \CA_r$ and so, $\sigma(T) \subseteq \CA_r$. Let $f$ be a Laurent polynomial, i.e., ${\displaystyle f(z)=\sum_{i=-n}^{m}a_iz^i }$, where each $a_i \in \C$ and $n, m \in \mathbb{N}$. We can re-write $f$ as
\[
f(z)=f_1\left(\frac{z}{\sqrt{1+r^2}}\right)+f_2\left(\frac{rz^{-1}}{\sqrt{1+r^2}}\right)
\]
for some polynomials $f_1$ and $f_2$ in one-variable. For the holomorphic map $g: \BC \to \C$ defined as $g(z_1, z_2)=f_1(z_1)+f_2(z_2)$, we have that
\begin{equation*}
\begin{split}
\|f(T)\|
&=\|f_1(T_1)+f_2(T_2)\| \\
&=\|g(T_1, T_2)\|\\
&\leq \sup\{|g(z_1, z_2)|: (z_1, z_2) \in Z(q) \cap \BC \} \qquad  [\text{as $Z(q) \cap \BC$ is a spectral set for $(T_1, T_2)$}]\\
&=\sup\{|f_1(z_1)+f_2(z_2)| : (z_1, z_2) \in Z(q) \cap \BC\}\\
&=\sup\left\{\left|f_1(z)+f_2\left(\frac{rz^{-1}}{1+r^2}\right)\right| : \left(z,\frac{rz^{-1}}{1+r^2}\right) \in  \BC\right\}\\
&=\sup\left\{\left|f(z\sqrt{1+r^2})\right| : \left(z,\frac{rz^{-1}}{1+r^2}\right) \in  \BC\right\}\\
&\leq \sup\left\{\left|f(z\sqrt{1+r^2})\right| : z\sqrt{1+r^2} \in \CA_r\right\} \quad [\text{by Lemma} \ \ref{lem_pi}]\\
& \leq \|f\|_{\infty, \CA_r}.
\end{split}
\end{equation*}
Since Laurent polynomials are dense in $Rat(\CA_r)$, it follows that $\|f(T)\| \leq \|f\|_{\infty, \CA_r}$ for every $f$ in $Rat(\CA_r)$. Consequently, $T$ is an $\A_r$-contraction. The proof is now complete.
\end{proof}

If $T$ is an $\A_r$-contraction, then it follows from Theorem \ref{thm_A_con_Ball} that $Z(q) \cap \BC$ is a spectral set for $\kappa(T)$. Since any compact superset containing a spectral set is again a spectral set, we have that $\BC$ is a spectral set for $\kappa(T)$. Thus, we have the following result.

\begin{cor}\label{cor_A_con_Ball}
If $T$ is an $\A_r$-contraction, then $\kappa(T)$ is a $\B_2$-contraction. 
\end{cor}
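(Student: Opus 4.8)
The plan is to read off this statement as an immediate consequence of Theorem \ref{thm_A_con_Ball} together with the elementary fact that spectral sets are preserved under passing to compact supersets. First I would invoke Theorem \ref{thm_A_con_Ball}: since $T$ is assumed to be an $\A_r$-contraction (in particular invertible), that theorem guarantees that the principal variety $Z(q) \cap \BC$ is a spectral set for the induced pair $\kappa(T)$. The goal is then simply to enlarge this spectral set from $Z(q) \cap \BC$ to the entire closed biball $\BC$, since by Definition \ref{defn:new-T-01} the assertion that $\kappa(T)$ is a $\B_2$-contraction means precisely that $\BC$ is a spectral set for $\kappa(T)$.

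The key step is the containment $Z(q) \cap \BC \subseteq \BC$, which is immediate, combined with the monotonicity of spectral sets already recorded in the proof of Lemma \ref{thm_nested}: a compact superset of a spectral set is again a spectral set. Concretely, if $f \in Rat(\BC)$ then $f$ has no poles on the larger set $\BC$, hence none on the smaller set $Z(q) \cap \BC$, so $f$ restricts to an element of $Rat(Z(q) \cap \BC)$; and since the supremum over the smaller set is dominated by the supremum over the larger one, the von Neumann inequality for $\kappa(T)$ on $Z(q) \cap \BC$ yields the corresponding inequality on $\BC$. The spectral containment required by the definition is inherited for free, as $\sigma_T(\kappa(T)) \subseteq Z(q) \cap \BC \subseteq \BC$. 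Putting these together shows that $\BC$ is a spectral set for $\kappa(T)$, which is exactly the desired conclusion.

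There is no genuine obstacle here; the corollary is a direct specialization of the main theorem, and the only point that even warrants a sentence is the superset monotonicity, which is purely formal. If one wished to be slightly more self-contained, one could alternatively appeal to Corollary \ref{cor_A_con_Ball}'s companion route through Lemma \ref{lem_pi} to verify the spectral containment, but the one-line enlargement argument above is the cleanest and is the approach I would write up.
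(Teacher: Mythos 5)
Your argument is correct and coincides with the paper's own justification, which derives the corollary from Theorem \ref{thm_A_con_Ball} together with the observation that a compact superset of a spectral set is again a spectral set, so that $Z(q) \cap \BC \subseteq \BC$ immediately upgrades the spectral set from the principal variety to the closed biball. Nothing further is needed.
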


As an immediate application of the above corollary, we have an alternative proof to the following canonical decomposition for an $\A_r$-contraction that appeared as Theorem 4.5 in \cite{PalII}. Before stating it, let us recall from \cite{PalII} that a \textit{completely non-unitary $\A_r$-contraction} or simply a \textit{c.n.u. $\A_r$-contraction} is an $\A_r$-contraction whose restriction to any nonzero joint reducing subspace of its components is not an $\A_r$-unitary.

\begin{thm}\label{thm_ann_dec}
Let $T$ be an $\A_r$-contraction on a Hilbert space $\mathcal{H}$. Then there is a decomposition of $\mathcal{H}$ into an orthogonal sum of two closed subspaces reducing $T$, say $\mathcal{H}= \mathcal{H}_{1} \oplus \mathcal{H}_{2},$ such that $T|_{\mathcal{H}_1}$ is an $\A_r$-unitary and $T|_{\mathcal{H}_2}$ is a c.n.u. $\A_r$-contraction. The spaces $\mathcal{H}_{1}$ or $\mathcal{H}_{2}$ may equal $\{0\}$.
\end{thm}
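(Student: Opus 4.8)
The plan is to mirror the argument used for the Wold decomposition in Theorem \ref{thm_Wold}, pushing Eschmeier's canonical decomposition for $\B_2$-contractions (Theorem \ref{thm_decomp-Ball}) back through the map $\kappa$. First I would set $(T_1, T_2)=\kappa(T)$ and invoke Corollary \ref{cor_A_con_Ball} to conclude that $(T_1, T_2)$ is a $\B_2$-contraction. Theorem \ref{thm_decomp-Ball} then yields an orthogonal decomposition $\HS=\HS_1 \oplus \HS_2$ into joint reducing subspaces of $(T_1, T_2)$ on which $(T_1|_{\HS_1}, T_2|_{\HS_1})$ is a $\B_2$-unitary and $(T_1|_{\HS_2}, T_2|_{\HS_2})$ is a c.n.u. $\B_2$-contraction. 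The whole proof is then a matter of transporting this decomposition back to the single operator $T$.

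The observation that makes the translation work is that, since $T$ is invertible and $T_1=T\slash\sqrt{1+r^2}$, $T_2=rT^{-1}\slash\sqrt{1+r^2}$, a closed subspace reduces $T$ if and only if it jointly reduces the pair $(T_1, T_2)$; indeed, any subspace reducing $T$ also reduces $T^{-1}$, and conversely a subspace jointly reducing $(T_1,T_2)$ reduces both $T$ and $T^{-1}$. Hence $\HS_1$ and $\HS_2$ reduce $T$, and the restrictions $T|_{\HS_1}, T|_{\HS_2}$ are again invertible. Because $\kappa(T|_{\HS_1})=(T_1|_{\HS_1}, T_2|_{\HS_1})$ is a $\B_2$-unitary, Proposition \ref{prop_A_uni_Ball} gives at once that $T|_{\HS_1}$ is an $\A_r$-unitary.

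It remains to show that $T|_{\HS_2}$ is a c.n.u.\ $\A_r$-contraction, and this is the step I would treat most carefully. Suppose $\mathcal{L} \subseteq \HS_2$ is a nonzero closed reducing subspace of $T$ for which $T|_{\mathcal{L}}$ is an $\A_r$-unitary. By the correspondence above, $\mathcal{L}$ jointly reduces $(T_1, T_2)$, and by Proposition \ref{prop_A_uni_Ball} the restriction $(T_1|_{\mathcal{L}}, T_2|_{\mathcal{L}})=\kappa(T|_{\mathcal{L}})$ is a $\B_2$-unitary. This contradicts the fact that $(T_1|_{\HS_2}, T_2|_{\HS_2})$ is a c.n.u.\ $\B_2$-contraction, whose restriction to no nonzero joint reducing subspace is a $\B_2$-unitary. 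Therefore $\mathcal{L}=\{0\}$, so $T|_{\HS_2}$ has no $\A_r$-unitary part and is c.n.u.

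The main obstacle, such as it is, lies entirely in cleanly establishing the equivalence between reducing subspaces of the single operator $T$ and joint reducing subspaces of the pair $\kappa(T)$; once that bookkeeping is in place, Proposition \ref{prop_A_uni_Ball} transports both the unitary part and the complete non-unitarity in either direction with no further computation, and the result follows.
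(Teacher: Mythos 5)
Your proposal is correct and follows essentially the same route as the paper: apply Corollary \ref{cor_A_con_Ball} to get that $\kappa(T)$ is a $\B_2$-contraction, invoke Eschmeier's decomposition (Theorem \ref{thm_decomp-Ball}), and transport both pieces back through Proposition \ref{prop_A_uni_Ball}. The only difference is that you spell out the equivalence between reducing subspaces of $T$ and joint reducing subspaces of $\kappa(T)$, which the paper leaves implicit; that is a correct and harmless addition.
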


\begin{proof}
The proof is similar to that of Theorem \ref{thm_Wold}. Let $(T_1, T_2)=\kappa(T)$. By Corollary \ref{cor_A_con_Ball}, $\kappa(T)$ is a $\B_2$-contraction. We have by Theorem \ref{thm_decomp-Ball} that there is an orthogonal decomposition of $\HS$ into joint reducing subspaces $\HS_1$ and $\HS_2$ of $(T_1, T_2)$ such that $(T_1|_{\HS_1}, T_2|_{\HS_1})$ is a $\B_2$-unitary and $(T_1|_{\HS_2}, T_2|_{\HS_2})$ is a c.n.u. $\B_2$-contraction. It follows from Proposition \ref{prop_A_uni_Ball} that $T|_{\HS_1}$ is an $\A_r$-unitary. Let $\mathcal{L} \subseteq \HS_2$ be a closed reducing subspace of $T$ such that $T|_{\mathcal{L}}$ is an $\A_r$-unitary. Again by Proposition \ref{prop_A_uni_Ball}, $(T_1|_{\mathcal{L}}, T_2|_{\mathcal{L}})$ is a $\B_2$-unitary. Thus, $\mathcal{L}=\{0\}$ since $(T_1|_{\HS_2}, T_2|_{\HS_2})$ is a c.n.u. $\B_2$-contraction and so, $T|_{\HS_2}$ is a c.n.u. $\A_r$-contraction.
\end{proof}

Evidently, Theorem \ref{thm_A_con_Ball} paves a way to realize an $\A_r$-contraction as a commuting pair of operators whose spectral set is polynomially convex. This leads to studying the set $Z(q) \cap \BC$ in more detail. In this regard, we first need a few terminologies and results from \cite{Dales} related to peak points and the Shilov boundary of a function algebra.

\smallskip 

\noindent \textbf{Boundary and peak points for finitely generated function algebras.}  Let $X$ be a compact Hausdorff space and let $C(X)$ be the Banach algebra of complex-valued continuous functions on $X$ endowed with the supremum norm, $\|f\|_\infty=\sup\{|f(x)|: x \in X\}$. A subalgebra of $C(X)$ is said to be a \textit{normed function algebra} if it contains the constants and separates the points of $X$. Let $\mathcal{A}$ be a normed function algebra on $X$. If $\mathcal{F}$ is a subset of $\mathcal{A}$, then the smallest closed subalgebra $\mathcal{A}_0$ of $\mathcal{A}$ which contains $\mathcal{F}$ and the identity of $\mathcal{A}$ is called the subalgebra of $\mathcal{A}$ generated by $\mathcal{F}$, and $\mathcal{F}$ is the \textit{set of generators} for $\mathcal{A}_0$. We say $\mathcal{A}$ is \textit{finitely generated} if it has a finite set of generators. Moreover, $\mathcal{A}$ is said to be \textit{inverse-closed} if every function of the algebra which has no zeros on $X$ has an inverse in $\mathcal{A}$. The \textit{Shilov boundary for $\mathcal{A}$} is the smallest closed subset of $X$ on which every function in $\mathcal{A}$ attains its maximum modulus. A point $w \in X$ is a \textit{peak point} if there exists $f \in \mathcal A$ such that $f(w)=1$ and $|f(z)|<1$ for all $z \ne w$ in $X$. The function $f$ is called a \textit{peaking function} for $w$. See Section 2 in \cite{Dales} for a rigorous discussion on this topic. Also, recall that the distinguished boundary $bX$ of $X$ is the Shilov boundary for $Rat(X)$, which may or may not be equal to the topological boundary $\partial X$ of $X$. For example, in case of the polyball $\B_n$, $\partial \overline{\B}_n=b\overline{\B}_n$ (see Example 4.10 in \cite{Mackey} and the discussion thereafter), whereas for the polydisk $\D^n$, $b\overline{\D}^n \subsetneq \partial \overline{\D}^n$. An interested reader is referred to \cite{ArvesonII} for further details. We recall a useful result in this direction.

\begin{prop}[\cite{Dales}, Proposition 2.4]\label{prop_Dales}
Let $\mathcal{A}$ be a finitely generated normed function algebra of $X$ which is inverse-closed. Then Shilov boundary for $\mathcal{A}$ is the closure of the set of peak points.	
\end{prop}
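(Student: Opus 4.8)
The plan is to prove the two inclusions separately, the elementary one first and the substantive one by way of the Choquet boundary. For the easy direction, observe that if $w \in X$ is a peak point with peaking function $f \in \mathcal{A}$ (so $f(w)=1$ and $|f(z)|<1$ for every $z \neq w$), then $|f|$ attains its maximum modulus on $X$ at the single point $w$. Hence $w$ must belong to every closed subset of $X$ on which all functions in $\mathcal{A}$ attain their maximum modulus, and in particular $w$ lies in the Shilov boundary $\partial_{\mathcal{A}} X$. Since $\partial_{\mathcal{A}} X$ is closed, the closure of the set of peak points is contained in $\partial_{\mathcal{A}} X$.

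The key structural input for the reverse inclusion is that $X$ is metrizable. Fixing a finite generating set $g_1, \dots, g_n$ for $\mathcal{A}$, the map $\Phi : X \to \C^n$ given by $\Phi(x)=(g_1(x), \dots, g_n(x))$ is continuous and, because $\mathcal{A}$ separates the points of $X$, injective; as $X$ is compact and $\C^n$ is Hausdorff, $\Phi$ is a homeomorphism onto the compact set $K=\Phi(X) \subseteq \C^n$. Thus $X$ carries a metric topology, and in particular every singleton $\{w\}$ is a $G_\delta$ set.

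For the substantive inclusion $\partial_{\mathcal{A}} X \subseteq \overline{\{\,\text{peak points}\,\}}$, I would pass to the Choquet boundary, the set of points $x$ for which the point mass $\delta_x$ is the unique representing measure for the evaluation functional at $x$ on the uniform closure $\overline{\mathcal{A}}$. Two facts then combine. First, the Shilov boundary is the closure of the Choquet boundary, which holds for any uniform algebra via the Arens--Kelley description of $\partial_{\mathcal{A}} X$ in terms of representing measures. Second, in the metrizable setting every Choquet boundary point is a peak point; this is Bishop's theorem, whose engine is precisely the $G_\delta$ property of singletons established above, which lets one upgrade the measure-theoretic uniqueness of $\delta_x$ into an honest peaking function. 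Here the inverse-closedness of $\mathcal{A}$, together with finite generation, is what guarantees that the maximal ideal space of $\overline{\mathcal{A}}$ is $X$ itself, so that the abstract boundary theory takes place on $X$ and the peaking functions produced can be realized inside $\mathcal{A}$. Combining the two facts gives $\partial_{\mathcal{A}} X = \overline{\{\,\text{peak points}\,\}}$, as desired.

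The main obstacle is the last step: manufacturing an explicit peaking function at a Choquet boundary point. The passage from ``the only representing measure is the point mass'' to ``there exists $f \in \mathcal{A}$ with $f(w)=1$ and $|f|<1$ elsewhere'' is not formal. It relies essentially on metrizability to write $\{w\}=\bigcap_{k} U_k$ as a countable intersection of open sets, to select functions that peak approximately on each $U_k$, and to sum them with suitable geometric-series weights so that the resulting limit peaks exactly at $w$. Checking that this limit lies in $\mathcal{A}$, rather than merely in its uniform closure, is exactly where the inverse-closed and finitely generated hypotheses do their real work.
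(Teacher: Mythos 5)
A preliminary remark on the comparison itself: the paper does not prove this proposition. It is imported verbatim as Proposition 2.4 of Dales' article and used as a black box in the proof of Lemma \ref{lem_Dales}, so there is no in-paper argument to measure your proposal against; it has to stand on its own.

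Your easy inclusion (peak points lie in the Shilov boundary, hence so does their closure) and the metrizability observation (the finitely many generators embed $X$ homeomorphically into $\C^n$) are both correct, and routing the hard inclusion through the Choquet boundary is a reasonable strategy. The genuine gap is the one you flag in your last paragraph and then leave unresolved. The two facts you invoke --- that the Shilov boundary is the closure of the Choquet boundary, and that in the metrizable case every Choquet boundary point is a peak point (Bishop) --- are theorems about \emph{uniform algebras}, i.e., about the uniform closure $\overline{\mathcal{A}}$ of $\mathcal{A}$ in $C(X)$. Bishop's construction produces a peaking function as a uniformly convergent series $\sum_k 2^{-k}f_k$ with $f_k \in \mathcal{A}$, and that limit lives in $\overline{\mathcal{A}}$, not in $\mathcal{A}$. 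Since every peak point of $\mathcal{A}$ is a peak point of $\overline{\mathcal{A}}$ but not conversely, the density of the $\overline{\mathcal{A}}$-peak points in the Shilov boundary does not yield the density of the $\mathcal{A}$-peak points, which is what the proposition asserts. This is not a cosmetic defect: Dales' paper exists precisely because the statement \emph{fails} for general Banach function algebras, so the hypotheses of finite generation and inverse-closedness must enter in a concrete, load-bearing way --- for instance, by showing that $\mathcal{A}$ contains every rational function in the generators with no poles on $X$ and that peaking can be achieved by such a function --- and your proposal never does this; it only asserts that these hypotheses "do their real work" at that step. The side claim that they "guarantee that the maximal ideal space of $\overline{\mathcal{A}}$ is $X$" is likewise unsubstantiated and is not the property actually needed. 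As written, the argument establishes the proposition only for the uniform closure of $\mathcal{A}$, which is a strictly weaker statement.
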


As an application of the above proposition, we have the following lemma for $b(Z(q) \cap \BC)$. From here onwards, we use the notation $cw=(cw_1, \dotsc, cw_n)$ for $w=(w_1, \dotsc, w_n) \in \C^n$ and $c \in \C$.

\begin{lem}\label{lem_Dales}
If $w \in b(Z(q) \cap \BC)$, then $-w \in b(Z(q) \cap \BC)$, where $q$ is as in $(\ref{eqn:new-T-001})$.
\end{lem}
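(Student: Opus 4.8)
The plan is to use the linear involution $\psi(z_1,z_2)=(-z_1,-z_2)$, i.e. $\psi(w)=-w$, as a symmetry of the principal variety $X:=Z(q)\cap\BC$ and to deduce that $\psi$ must also preserve the distinguished boundary $bX$. Since $bX$ is by definition the Shilov boundary of $Rat(X)$, and Proposition~\ref{prop_Dales} identifies the Shilov boundary with the closure of the peak points, it suffices to show that the set of peak points of $X$ is invariant under $\psi$.

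First I would record that $\psi$ maps $X$ onto itself: writing $w=(z_1,z_2)$ one has $q(-z_1,-z_2)=(-z_1)(-z_2)-r/(1+r^2)=q(z_1,z_2)$, so $Z(q)$ is $\psi$-invariant, and $|-z_1|^2+|-z_2|^2=|z_1|^2+|z_2|^2$ shows $\BC$ is $\psi$-invariant as well. Thus $\psi$ is a homeomorphism of $X$ with $\psi^{-1}=\psi$. To apply Proposition~\ref{prop_Dales} I must exhibit the right function algebra. By Lemma~\ref{lem_pconvex} the set $X$ is polynomially convex, so its polynomial hull is $X$ and the maximal ideal space of $\mathcal A:=P(X)$ (the uniform closure of the polynomials on $X$) is $X$ itself. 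Consequently $\mathcal A$ is finitely generated by the coordinate functions $z_1,z_2$, and it is inverse-closed by Gelfand theory, since a function in $\mathcal A$ that does not vanish on the maximal ideal space $X$ is invertible in $\mathcal A$. Moreover, because $X$ is polynomially convex, the Oka--Weil theorem gives $\overline{Rat(X)}=P(X)=\mathcal A$, so the Shilov boundary of $\mathcal A$ is precisely $bX$, and Proposition~\ref{prop_Dales} applies.

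It remains to transport peak points. If $w\in X$ is a peak point with peaking function $f\in\mathcal A$, so that $f(w)=1$ and $|f(v)|<1$ for all $v\in X\setminus\{w\}$, then $g:=f\circ\psi$ again lies in $\mathcal A$ (the approximating polynomials for $f$ compose with the polynomial map $\psi$), and $g(\psi(w))=f(w)=1$, while for $u\in X\setminus\{\psi(w)\}$ injectivity of $\psi$ gives $\psi(u)\ne w$ and hence $|g(u)|=|f(\psi(u))|<1$. Thus $g$ peaks at $\psi(w)=-w$, so the peak-point set is carried into itself by $\psi$; applying the involution $\psi$ once more yields equality. Taking closures and using continuity of $\psi$ then gives $\psi(bX)=bX$, so $w\in bX$ implies $-w\in bX$, as desired. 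The one genuinely fiddly point is the preliminary identification of the correct finitely generated, inverse-closed algebra underlying $bX$; once polynomial convexity of $X$ is invoked this is routine, and the symmetry argument itself is entirely elementary.
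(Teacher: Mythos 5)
Your argument is correct and follows essentially the same route as the paper: both establish that the relevant uniform algebra on $Z(q)\cap\BC$ is finitely generated (via polynomial convexity and Oka--Weil) and inverse-closed, invoke Proposition~\ref{prop_Dales} to identify $b(Z(q)\cap\BC)$ with the closure of the peak points, and then transport peak points through the symmetry $z\mapsto -z$, which preserves $Z(q)\cap\BC$ because $q$ is invariant under it. The only cosmetic difference is that you work with the closure of the polynomials and deduce inverse-closedness from Gelfand theory, whereas the paper works directly with $Rat(Z(q)\cap\BC)$ and shows it is generated by the two coordinate functions; these amount to the same algebra by your own Oka--Weil observation.
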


\begin{proof}
The rational algebra $\mathcal{A}=Rat(Z(q) \cap \BC)$ is a normed function algebra which is also inverse-closed. We have by Lemma \ref{lem_pi} that 
\[
Z(q) \cap \BC=\left\{  \left(\frac{z}{\sqrt{1+r^2}},  \frac{rz^{-1}}{\sqrt{1+r^2}}\right) : r \leq |z|\leq 1\right\}.
\]
Let us define complex-valued functions $p_0, p_1$ on $Z(q) \cap \BC$ as follows:
\[
p_0\left(\frac{z}{\sqrt{1+r^2}},  \frac{rz^{-1}}{\sqrt{1+r^2}}\right):=\frac{z}{\sqrt{1+r^2}}, \quad p_1\left(\frac{z}{\sqrt{1+r^2}},  \frac{rz^{-1}}{\sqrt{1+r^2}}\right):=\frac{rz^{-1}}{\sqrt{1+r^2}}.
\]
Let $\mathcal{F}=\{p_0, p_1\}$ and let $\mathcal{A}_0$ be the smallest closed subalgebra of $\mathcal{A}$ containing $\mathcal{F}$ and the identity of $\mathcal{A}$ (which is the constant function $1$). Let $f \in \mathcal{A}$. By Lemma \ref{lem_pconvex}, there is a sequence of polynomials $\{f_n\}$ in two variables such that $\|f_n-f\|_{\infty, Z(q) \cap \BC} \to 0$ as $n \to \infty$. Note that the map
\[
f_n:Z(q) \cap \BC \to \mathbb{C}, \qquad \left(\frac{z}{\sqrt{1+r^2}},  \frac{rz^{-1}}{\sqrt{1+r^2}}\right) \mapsto f_n\left(\frac{z}{\sqrt{1+r^2}}, \frac{rz^{-1}}{\sqrt{1+r^2}}\right)
\]
can be written as polynomial in $z$ and $z^{-1}$. In other words, the restriction of each $f_n$ to $Z(q) \cap \BC$ is in $span \, \{p_0, p_1\}$. Hence, $f$ is in the closed span of $\mathcal{F}$ and so, $f \in \mathcal{A}_0$. Consequently, $\mathcal{A}_0 =\mathcal{A}$ and so, $\mathcal{A}$ is a finitely generated algebra. It follows from Proposition \ref{prop_Dales} that the set of peak points of $Z(q) \cap \BC$ is dense in $b(Z(q) \cap \BC)$. Let $w \in b(Z(q) \cap \BC)$. Then there is a sequence $\{w_n\}$ of peak points in $Z(q) \cap \BC$ such that ${\displaystyle \lim_{n \to \infty}w_n=w}$. Let $f_n$ be a peaking function for $w_n$, i.e., $f_n \in Rat(Z(q) \cap \BC), f_n(w_n)=1$ and $|f_n(z)|<1$ for all $z, w_n \in Z(q) \cap \BC$ with $z \ne w_n$. Then $g_n(z)=f_n(-z)$ is in $Rat(Z(q) \cap \BC)$ and $g_n$ peaks at $-w_n$. Hence, $\{-w_n\}$ is a sequence of peak points that converges to $-w$. By definition, $b(Z(q) \cap \BC)$ is a closed set that contains $\{-w_n\}$ and so, $-w \in b(Z(q) \cap \BC)$. The proof is now complete.
\end{proof}
 
 We now describe the distinguished boundary of the set $Z(q) \cap \BC$.

\begin{prop}\label{prop_bB}
$b(Z(q) \cap \BC)=Z(q) \cap \partial \B_2$, where $q$ is as in $(\ref{eqn:new-T-001})$.
\end{prop}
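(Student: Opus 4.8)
The plan is to prove the two inclusions $b(Z(q)\cap\BC)\subseteq Z(q)\cap\partial\B_2$ and $Z(q)\cap\partial\B_2\subseteq b(Z(q)\cap\BC)$ separately, transporting every computation to the annulus through the map $\kappa$. By Lemma \ref{lem_pi}, $\kappa$ sends $\CA_r=\{r\le|z|\le 1\}$ bijectively onto $Z(q)\cap\BC$; being continuous, injective and closed, it is a homeomorphism onto its image and carries $\partial\CA_r$ onto $Z(q)\cap\partial\B_2$. Moreover, as observed in the proof of Lemma \ref{lem_Dales}, $f\mapsto f\circ\kappa$ sends each polynomial in $z_1,z_2$ to a Laurent polynomial in $z$, so by Oka--Weil density this pullback is an isometric isomorphism of $Rat(Z(q)\cap\BC)$ onto the uniform algebra on $\CA_r$ generated by $z$ and $z^{-1}$. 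The conceptual backbone is thus that Shilov boundaries correspond under $\kappa$, so it suffices to show that the distinguished boundary of the annulus algebra is exactly $\partial\CA_r$; I will, however, carry out the two inclusions concretely on the variety side.

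For the inclusion $\subseteq$, I would invoke the maximum modulus principle. Every $f\in Rat(Z(q)\cap\BC)$ pulls back to a function $f\circ\kappa$ holomorphic on a neighbourhood of the closed annulus $\CA_r$, and on the connected annulus $|f\circ\kappa|$ attains its maximum only on $\partial\CA_r=\{|z|=r\}\cup\{|z|=1\}$, never at an interior point unless $f\circ\kappa$ is constant. Hence $\kappa(\partial\CA_r)=Z(q)\cap\partial\B_2$ is a closed boundary for $Rat(Z(q)\cap\BC)$, and since the distinguished boundary is by definition the smallest such closed set, $b(Z(q)\cap\BC)\subseteq Z(q)\cap\partial\B_2$.

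For the reverse inclusion I would show that every point of $Z(q)\cap\partial\B_2$ is a peak point, using that a peak point always lies in every closed boundary (if $g$ peaks at $w$, then $\sup_B|g|=\max_{Z(q)\cap\BC}|g|=|g(w)|$ forces $w\in B$ for any closed boundary $B$). Writing a boundary point as $\kappa(z_0)$ and recalling $w_1=z/\sqrt{1+r^2}$, $w_2=rz^{-1}/\sqrt{1+r^2}$, I split by circle. For $|z_0|=1$, take the polynomial $g(w_1,w_2)=\tfrac12\bigl(1+\tfrac{\sqrt{1+r^2}}{z_0}w_1\bigr)$, so that $g\circ\kappa(z)=\tfrac12(1+z/z_0)$, which equals $1$ at $z_0$ while $|g\circ\kappa(z)|\le\tfrac12(1+|z|)<1$ elsewhere on $\CA_r$. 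For $|z_0|=r$, take instead $h(w_1,w_2)=\tfrac12\bigl(1+\tfrac{z_0\sqrt{1+r^2}}{r}w_2\bigr)$, so that $h\circ\kappa(z)=\tfrac12(1+z_0/z)$, which peaks at $z_0$ because $|z_0/z|=r/|z|\le 1$ on $\CA_r$ with equality only on the inner circle. Since $\kappa$ is injective, each of these peaks exactly at $\kappa(z_0)$, so $\kappa(z_0)\in b(Z(q)\cap\BC)$; letting $z_0$ range over $\partial\CA_r$ gives $Z(q)\cap\partial\B_2\subseteq b(Z(q)\cap\BC)$.

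The two inclusions together give the proposition. The main obstacle is the reverse inclusion: one must produce genuine elements of $Rat(Z(q)\cap\BC)$—here single-coordinate polynomials—that peak at a prescribed boundary point and verify the strict inequality on both boundary circles at once, the inner-circle estimate being the delicate one since there the modulus of the first coordinate is not maximal. Alternatively, one could bypass the explicit peaking functions by combining Proposition \ref{prop_Dales} (the algebra is finitely generated and inverse-closed, as shown in the proof of Lemma \ref{lem_Dales}) with the rotational symmetry of $Z(q)\cap\BC$, of which Lemma \ref{lem_Dales} is the antipodal instance: the distinguished boundary is rotation invariant and contained in the two circles, while the coordinate functions $w_1$ and $w_2$, whose moduli are maximized respectively on the outer and inner circles, force both circles to lie in it.
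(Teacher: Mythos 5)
Your proof is correct, and for the harder inclusion it takes a genuinely different route from the paper. The inclusion $b(Z(q)\cap\BC)\subseteq Z(q)\cap\partial\B_2$ is argued exactly as in the paper: pull back along $\kappa$ and apply the maximum modulus principle on $\CA_r$. For the reverse inclusion, the paper takes $w\in Z(q)\cap\partial\B_2$ and uses the functional $f_w(z)=\langle z,w\rangle$, which by Cauchy--Schwarz attains its maximum modulus on the variety exactly at the two points $\pm w$; this only shows that $w$ \emph{or} $-w$ lies in the Shilov boundary, and the paper must then invoke Lemma \ref{lem_Dales} (antipodal symmetry of $b(Z(q)\cap\BC)$, itself proved via Proposition \ref{prop_Dales} on peak points of finitely generated inverse-closed algebras) to conclude $w\in b(Z(q)\cap\BC)$. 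You instead exhibit explicit peaking functions in a single coordinate, $\tfrac12\bigl(1+\sqrt{1+r^2}\,w_1/z_0\bigr)$ for $|z_0|=1$ and $\tfrac12\bigl(1+z_0\sqrt{1+r^2}\,w_2/r\bigr)$ for $|z_0|=r$, which pull back to $\tfrac12(1+z/z_0)$ and $\tfrac12(1+z_0/z)$ respectively and peak at the single point $\kappa(z_0)$ because $\kappa$ is a bijection of $\CA_r$ onto the variety. This makes every point of $Z(q)\cap\partial\B_2$ a genuine peak point, renders Lemma \ref{lem_Dales} unnecessary for this proposition, and is shorter and more self-contained than the paper's argument. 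One small slip in your writeup: for $|z_0|=1$ the displayed chain $|g\circ\kappa(z)|\le\tfrac12(1+|z|)<1$ fails at points of the unit circle other than $z_0$; the correct justification is that $|1+u|\le 1+|u|\le 2$ with equality throughout only when $u=1$, applied to $u=z/z_0$ (and to $u=z_0/z$ in the inner-circle case, where $|z_0/z|=r/|z|\le 1$ on $\CA_r$). This is the standard peaking estimate and does not affect the conclusion.
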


\begin{proof}
Let $w \in Z(q) \cap \partial \B_2$. Then $w=(w_1, w_2) \in \partial \B_2$. The function $f_w: \overline{\B}_2 \to \C $ defined by $f_w(z)=\langle z, w\rangle=z_1\overline{w}_1+ z_2\overline{w}_2$ for all $w=(w_1, w_2)$ in $\overline{\B}_2$ is holomorphic on $\BC$. By Cauchy-Schwarz inequality, $|f_w(z)| \leq 1$ and $|f_w(e^{i\theta}w)|=1$ for all $z \in \BC$ and $\theta \in \mathbb{R}$. Take $z=(z_1, z_2) \in \BC$ such that $|f_w(z)|=1$. Again, by Cauchy-Schwarz inequality, $1=|\langle z, w \rangle| \leq \|z\| \|w\| \leq 1$ which holds if and only if $z=c w$ for some $c \in \mathbb{T}$. Therefore, the maximum-modulus of $f_w$ occurs only at points of the form $e^{i\theta}w$ for $\theta \in \mathbb{R}$. Define $g_w=f_w|_{Z(q) \cap \BC}$ which is in $Rat(Z(q) \cap \BC)$ and $|g_w(z)| \leq 1$ for all $z \in Z(q) \cap \BC$. Moreover, $\pm w \in Z(q) \cap \BC$ and $|g_w(\pm w)|=1$. If  $|g_w(z)|=1$ for some $z \in Z(q) \cap \BC$, then $|f_w(z)|=1$ and so, $z=cw$ for some $|c|=1$. Any point of the form $cw \in Z(q) \cap \BC$ if and only if $c^2=1$. Hence, $g_w$ attains its maximum modulus at $\pm w$. Thus, either $w$ or $-w$ is in $b(Z(q) \cap \BC)$. In either case, we have by Lemma \ref{lem_Dales} that $w \in b(Z(q) \cap \BC)$ and so, $Z(q) \cap \partial \B_2 \subseteq b(Z(q) \cap \BC)$.

\smallskip

Let $f \in Rat(Z(q) \cap \BC)$. Then $g=f \circ \kappa \in Rat(\CA_r)$. By maximum modulus principle, there exists $z \in \partial \A_r$ such that $|g(z)|=\|g\|_{\infty, \CA_r}$. Let $w=\kappa(z)$. By  Lemma \ref{lem_pi}, $w \in Z(q) \cap \partial \B_2$. Then
\begin{equation*}
\begin{split}
|f(w)|=|g(z)|
&=\sup\{|g(\lambda)| : \lambda \in \CA_r \}\\
&=\sup\{|f(\kappa(\lambda))| : \lambda \in \CA_r \}\\
& = \sup\{|f(\kappa(\lambda))| : \kappa(\lambda) \in Z(q) \cap \BC\} \quad [\mbox{by Lemma \ref{lem_pi}}] \\
& = \|f\|_{\infty, Z(q) \cap \BC}. \\
\end{split}
\end{equation*}
Thus, $f$ attains its maximum modulus at some point of $Z(q) \cap  \partial \B_2$. Since $f \in Rat(Z(q) \cap \BC)$ is arbitrary, we have that $b(Z(q) \cap \BC) \subseteq Z(q) \cap \partial \B_2$.
\end{proof}

The next result presents an interplay between a constrained dilation on $\BC$ versus the principal variety being a complete spectral set.
\begin{thm}\label{thm_ArvI}
Let $(T_1, T_2)$ be a commuting pair of operators acting on a Hilbert space $\HS$. Then $(T_1, T_2)$ admits dilation to a $\B_2$-unitary $(U_1, U_2)$ with $q(U_1, U_2)=0$ if and only if  $Z(q) \cap \BC$ is a complete spectral set for $(T_1, T_2)$. 
\end{thm}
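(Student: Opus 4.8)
The plan is to recognize this as a direct application of Arveson's characterization (Theorem \ref{thm_Arveson}) specialized to the compact set $X=Z(q)\cap\BC$. Indeed, Theorem \ref{thm_Arveson} asserts in full generality that for a commuting tuple and a compact set $X\subset\C^n$, the set $X$ is a complete spectral set for the tuple if and only if the tuple admits a normal $bX$-dilation. So the statement to be proved is essentially Theorem \ref{thm_Arveson} with $X=Z(q)\cap\BC$, provided we correctly identify the distinguished boundary $b(Z(q)\cap\BC)$ and recognize what ``normal $bX$-dilation'' means concretely for this variety. The bulk of the argument is therefore unpacking these two identifications rather than proving anything from scratch.

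\textbf{Key steps.} First I would invoke Proposition \ref{prop_bB} to replace the abstract distinguished boundary with its explicit description: $b(Z(q)\cap\BC)=Z(q)\cap\partial\B_2$. This converts the phrase ``normal $bX$-dilation'' in Theorem \ref{thm_Arveson} into the statement that $(T_1,T_2)$ dilates to a commuting pair of normal operators $(U_1,U_2)$ whose Taylor joint spectrum lies in $Z(q)\cap\partial\B_2$. Second, I would reconcile this with the hypothesis ``$\B_2$-unitary $(U_1,U_2)$ with $q(U_1,U_2)=0$.'' A $\B_2$-unitary is a commuting normal pair with joint spectrum in $b\overline{\B}_2=\partial\B_2$ (recall from the discussion after Proposition \ref{prop_Dales} that $\partial\overline{\B}_2=b\overline{\B}_2$), and the extra condition $q(U_1,U_2)=0$ forces the joint spectrum into $Z(q)$ by the spectral mapping theorem applied to the normal pair; conversely a normal pair with spectrum in $Z(q)\cap\partial\B_2$ is automatically a $\B_2$-unitary annihilated by $q$. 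Thus the two descriptions of the dilating operator coincide. Once these identifications are in place, the forward and backward implications follow immediately from the two directions of Theorem \ref{thm_Arveson}.

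\textbf{The main obstacle.} The genuinely delicate point is matching the \emph{algebraic} constraint $q(U_1,U_2)=0$ against the \emph{spectral} constraint $\sigma_T(U_1,U_2)\subseteq Z(q)$. For a general commuting pair these are not equivalent, but for a commuting \emph{normal} pair the spectral theorem (joint spectral measure) gives the equivalence: $q$ vanishes on the joint spectrum of a normal pair if and only if $q(U_1,U_2)=0$. I would state this explicitly, since it is the one place where normality is essential and where the bridge between the functional-analytic and polynomial descriptions is built. A secondary point worth a remark is that the dilation in Theorem \ref{thm_Arveson} is given abstractly via $f(\underline T)=V^*f(\underline N)V$ for $f\in Rat(X)$; to conclude $q(U_1,U_2)=0$ from the dilation one uses that $q\equiv 0$ on $X=Z(q)\cap\BC$, so $q(\underline N)$ compresses to $q(\underline T)$, and on the full dilation space $q(U_1,U_2)$ must itself vanish because its spectrum sits in $Z(q)$. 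With Proposition \ref{prop_bB} and the normal spectral mapping theorem in hand, no further computation is required and the proof reduces to citing Theorem \ref{thm_Arveson}.
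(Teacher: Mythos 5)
Your proposal follows essentially the same route as the paper: both directions come from Theorem \ref{thm_Arveson} after identifying $b(Z(q)\cap\BC)=Z(q)\cap\partial\B_2$ via Proposition \ref{prop_bB} and using the spectral theorem for commuting normal pairs to pass between the algebraic condition $q(U_1,U_2)=0$ and the spectral condition $\sigma_T(U_1,U_2)\subseteq Z(q)$. The one ingredient you leave implicit is that a $\B_2$-unitary dilation gives the compression identity a priori only for $f\in Rat(\BC)$ (equivalently, for polynomials), whereas Theorem \ref{thm_Arveson} applied to $X=Z(q)\cap\BC$ concerns the strictly larger algebra $Rat(Z(q)\cap\BC)$; the paper bridges this gap using the polynomial convexity of $Z(q)\cap\BC$ (Lemma \ref{lem_pconvex}) together with the Oka--Weil theorem, so your claim that ``no further computation is required'' should be tempered by this approximation step.
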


\begin{proof}
		Assume that	$(T_1, T_2)$ admits dilation to a $\B_2$-unitary $(U_1, U_2)$ on a space $\mathcal{K} \supseteq \mathcal{H}$ such that $q(U_1, U_2)=0$. By spectral mapping principle, $q(\sigma_T(U_1, U_2))=\sigma(q(U_1, U_2))=\{0\}$ and so, $\sigma_T(U_1, U_2) \subseteq Z(q) \cap \partial \B_2$. Thus, $Z(q) \cap \partial \B_2$ is a spectral set for $(U_1, U_2)$, because for commuting tuple of normal operators, its Taylor joint spectrum is a spectral set. Also, $f(T_1, T_2)=P_\mathcal{H}f(U_1, U_2)|_\mathcal{H}$ for every $f \in Rat(\BC)$ and hence for every polynomial $f$ in two variables. Since $Z(q) \cap \BC$ is a polynomially convex set, we have by Oka-Weil Theorem (see CH-7 in \cite{Oka-Weil}) that 
		\[
		f(T_1, T_2)=P_\mathcal{H}f(U_1, U_2)|_\mathcal{H}
		\]
for every $f \in Rat(Z(q) \cap \BC)$. By Proposition \ref{prop_bB}, $\sigma_T(U_1, U_2) \subseteq Z(q) \cap \partial \B_2=b(Z(q) \cap \BC)$. It follows from Theorem \ref{thm_Arveson} that $Z(q) \cap \BC$ is a complete spectral set for $(T_1, T_2)$.
		
\medskip

Conversely, let $Z(q) \cap \BC$ be a complete spectral set for $(T_1, T_2)$. Again by Theorem \ref{thm_Arveson}, there is a commuting pair of normal operators $(U_1, U_2)$ on a Hilbert space $\mathcal{K} \supseteq \mathcal{H}$ such that $\sigma_T(U_1, U_2)\subseteq b(Z(q)\cap \BC)$ and $f(T_1, T_2)=P_\mathcal{H}f(U_1, U_2)|_\mathcal{H}$ for every polynomial $f$ in two variables. It follows from Proposition \ref{prop_bB} that $b(Z(q) \cap \BC)=Z(q) \cap \partial\B_2$ and so, $\sigma_T(U_1, U_2) \subseteq Z(q) \cap \partial \B_2$. Since $q(U_1, U_2)$ is a normal operator, we have by spectral mapping theorem that
\[
\|q(U_1, U_2)\|=\{|\lambda| : \lambda \in \sigma(q(U_1, U_2))\}=\{|\lambda| : \lambda \in q(\sigma_T(U_1, U_2))\}=0.
\]
Therefore, $(U_1, U_2)$ is a $\B_2$-unitary with $q(U_1, U_2)=0$ and $p(T_1, T_2)=P_\HS p(U_1, U_2)|_{\HS}$ for all polynomials $p \in \C[z_1, z_2]$. Since $\BC$ is polynomially convex, every $f \in Rat(\BC)$ can be approximated by polynomials by Oka-Weil theorem (see CH-7 of \cite{Oka-Weil}) and it follows that  $f(T_1, T_2)=P_\HS f(U_1, U_2)|_{\HS}$ for all $f \in Rat(\BC)$. The proof is now complete.
	\end{proof}

We have already seen in Theorem \ref{thm_A_con_Ball} that $Z(q) \cap \BC$ is a spectral set for $\kappa(T)$ when $T$ is an $\A_r$-contraction and vice-versa. Since Theorem \ref{thm_Agler} tells us that for an $\A_r$-contraction $T$, $\CA_r$ is a complete spectral set, it is natural to ask if $Z(q) \cap \BC$ is a complete spectral set for $\kappa(T)$ when $T$ is an $\A_r$-contraction. The next theorem gives an affirmative answer to this.

\begin{thm}\label{thm_A_con_BallII}
Let $T$ be an invertible operator acting on a Hilbert space $\HS$. Then the following are equivalent:

\begin{enumerate}
\item $T$ is an $\A_r$-contraction;
\item $Z(q) \cap \BC$ is a complete spectral set for $\kappa(T)$;
\item $Z(q) \cap \BC$ is a spectral set for $\kappa(T)$.
\end{enumerate}
\end{thm}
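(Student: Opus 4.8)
The plan is to prove the cycle $(1) \Rightarrow (2) \Rightarrow (3) \Rightarrow (1)$, of which most is already available. The implication $(2) \Rightarrow (3)$ is immediate, since a complete spectral set is in particular a spectral set, and the equivalence $(1) \Leftrightarrow (3)$ is precisely Theorem \ref{thm_A_con_Ball}. Hence the whole content of the statement reduces to the single implication $(1) \Rightarrow (2)$.

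To establish $(1) \Rightarrow (2)$ I would use the dilation criterion of Theorem \ref{thm_ArvI}: it is enough to exhibit a $\B_2$-unitary $(U_1, U_2)$ on some $\KS \supseteq \HS$ that dilates $\kappa(T)$ and satisfies $q(U_1, U_2) = 0$. The candidate is supplied by Agler's theorem. Since $T$ is an $\A_r$-contraction, Theorem \ref{thm_Agler} gives an $\A_r$-unitary $N$ on a space $\KS \supseteq \HS$ with $f(T) = P_\HS f(N)|_\HS$ for every $f \in Rat(\CA_r)$. Setting $(U_1, U_2) = \kappa(N)$, Proposition \ref{prop_A_uni_Ball} shows that $\kappa(N)$ is a $\B_2$-unitary, and a one-line computation gives $q(\kappa(N)) = U_1U_2 - \tfrac{r}{1+r^2}I = \tfrac{r}{1+r^2}NN^{-1} - \tfrac{r}{1+r^2}I = 0$, so the candidate lands on the variety $Z(q)$.

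The step I expect to demand the most care is verifying that $\kappa(N)$ genuinely dilates $\kappa(T)$. The mechanism is that for any polynomial $p \in \C[z_1, z_2]$ the composition $p \circ \kappa$ is a Laurent polynomial in one variable, hence an element of $Rat(\CA_r)$, and that $p(\kappa(T)) = (p \circ \kappa)(T)$ because the components of $\kappa(T)$ are built from the commuting operators $T$ and $T^{-1}$; the same identity holds for $N$. The dilation property of $N$ then yields $p(\kappa(T)) = (p \circ \kappa)(T) = P_\HS (p \circ \kappa)(N)|_\HS = P_\HS p(\kappa(N))|_\HS$ for all polynomials $p$, and since $\BC$ is polynomially convex the Oka-Weil theorem promotes this to every $f \in Rat(\BC)$. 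Thus $\kappa(N)$ is a $\B_2$-unitary dilation of $\kappa(T)$ with $q(\kappa(N)) = 0$, and Theorem \ref{thm_ArvI} delivers that $Z(q) \cap \BC$ is a complete spectral set for $\kappa(T)$, which is $(2)$. One could instead argue $(1) \Rightarrow (2)$ directly: the reproof of Agler's theorem in Section \ref{sec_Agler} shows $\CA_r$ is a complete spectral set for $T$, and combining the matricial von Neumann inequality for $T$ with the identity $\kappa(\CA_r) = Z(q) \cap \BC$ from Lemma \ref{lem_pi} (after reducing to polynomials by Oka-Weil) transfers the complete bound to $\kappa(T)$; either route hinges on the compatibility of the two-variable calculus of $\kappa(T)$ with the Laurent calculus of $T$ via the vanishing of $q$ on the image of $\kappa$.
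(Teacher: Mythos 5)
Your proposal is correct and follows essentially the same route as the paper: reduce to $(1)\Rightarrow(2)$, invoke Agler's theorem to get an $\A_r$-unitary dilation $N$ of $T$, check via Proposition \ref{prop_A_uni_Ball} that $\kappa(N)$ is a $\B_2$-unitary annihilated by $q$, verify that $\kappa(N)$ dilates $\kappa(T)$, and finish with Theorem \ref{thm_ArvI}. The only cosmetic difference is that you pass through polynomials and Oka--Weil to verify the dilation identity, whereas the paper observes directly that $f\circ\kappa\in Rat(\CA_r)$ for every $f\in Rat(\BC)$; both are valid.
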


\begin{proof}
$(2) \implies (3)$ is trivial and $(3) \implies (1)$ follows from Theorem \ref{thm_A_con_Ball}. We prove $(1) \implies (2)$. Let $T$ be an $\A$-contraction on $\HS$. It follows from Theorem \ref{thm_Agler} that there is an $\A_r$-unitary $U$ on a larger Hilbert space $\KS$ containing $\HS$ such that $f(T)=P_\HS f(U)|_{\HS}$ for all $f \in Rat(\CA_r)$. We have by Proposition \ref{prop_A_uni_Ball} that 
\[
(U_1, U_2)=\kappa(U)=\left(\frac{U}{\sqrt{1+r^2}}, \frac{rU^{-1}}{\sqrt{1+r^2}} \right)
\]
is a $\B_2$-unitary. It is evident that $q(U_1, U_2)=0$. Let $(T_1, T_2)=\kappa(T)$ and let $f \in Rat(\BC)$. Then $f\circ \kappa \in Rat(\CA_r)$ and we have
\[
f(T_1, T_2)=f\circ \kappa(T)=P_\HS f \circ \kappa(U)|_\HS=P_\HS  f(U_1, U_2)|_\HS.
\]
Thus, $(T_1, T_2)$ admits dilation to $\B_2$-unitary $(U_1, U_2)$ with $q(U_1, U_2)=0$. The desired conclusion now follows from Theorem \ref{thm_ArvI} and the proof is complete.
\end{proof}

We present a few characterizations of $\A_r$-contractions that are simple consequences of Theorem \ref{thm_Agler}. Recall that an $\A_r$-coisometry is an invertible operator whose adjoint is an $\A_r$-isometry.

\begin{prop}\label{thm_coiso}
An invertible operator $T$ on a Hilbert space $\HS$ is an $\A_r$-contraction if and only if there is an $\A_r$-coisometry $V$ on a Hilbert space $\mathcal{K} \supseteq \HS$ such that $V\HS \subseteq \HS$ and $T=V|_\HS$. 
\end{prop}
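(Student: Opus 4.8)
The statement is the $\A_r$-analogue of the classical description of an invertible contraction as the restriction of a coisometry to an invariant subspace, and I would prove it by dualising to an $\A_r$-\emph{isometric} co-extension of $T^*$. Two elementary facts organise the argument. First, $\overline{\A}_r$ is invariant under complex conjugation, so $\overline{\A}_r$ is a spectral set for an operator $S$ if and only if it is one for $S^*$; in particular $T$ is an $\A_r$-contraction if and only if $T^*$ is. Second, if $S$ is an $\A_r$-contraction (hence invertible, since $0 \notin \overline{\A}_r$) and $\mathcal S$ is a closed $S$-invariant subspace on which $S|_{\mathcal S}$ is invertible, then $\mathcal S$ is automatically $S^{-1}$-invariant (the unique $S$-preimage of a vector of $\mathcal S$ again lies in $\mathcal S$); thus $f(S|_{\mathcal S})=f(S)|_{\mathcal S}$ for every Laurent polynomial $f$, and since Laurent polynomials are dense in $Rat(\overline{\A}_r)$, the restriction $S|_{\mathcal S}$ is again an $\A_r$-contraction, with spectrum in $\overline{\A}_r$ because $\|S|_{\mathcal S}\|\le1$ and $\|(S|_{\mathcal S})^{-1}\|\le r^{-1}$. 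I also note that an $\A_r$-coisometry is, by definition, invertible.

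For the reverse implication, suppose $V$ is an $\A_r$-coisometry with $V\HS\subseteq\HS$ and $T=V|_\HS$. I would first observe that $V$ is itself an $\A_r$-contraction: since $V^*$ is an $\A_r$-isometry it is the restriction $N|_{\mathcal K}$ of an $\A_r$-unitary $N$ to an $N$-invariant subspace $\mathcal K$, and $V^*$ is invertible, so the second fact above (applied to the $\A_r$-contraction $N$) makes $\overline{\A}_r$ a spectral set for $V^*$, hence for $V$ by the first fact. As $\HS$ is $V$-invariant and $T=V|_\HS$ is invertible, the second fact applied to $V$ and $\mathcal S=\HS$ shows that $T$ is an $\A_r$-contraction.

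For the forward implication I would pass to $T^*$, again an $\A_r$-contraction, and seek an $\A_r$-isometry $W$ on some $\mathcal K\supseteq\HS$ with $\HS$ co-invariant under $W$ and $P_\HS W|_\HS=T^*$; then $V:=W^*$ is an $\A_r$-coisometry (as $V^*=W$ is an $\A_r$-isometry), $\HS$ is $V$-invariant (co-invariance of $W$), and $V|_\HS=W^*|_\HS=(P_\HS W|_\HS)^*=(T^*)^*=T$, since $W^*\HS\subseteq\HS$. It is convenient to build $W$ in the biball: using the identity $\kappa(S)^*=\kappa(S^*)$ one checks, via Proposition \ref{thm_A_iso_Ball}, that $V$ is an $\A_r$-coisometry exactly when $\kappa(V)$ is a $\B_2$-coisometry (the componentwise adjoint of a $\B_2$-isometry). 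Since $\kappa(T)$ is a $\B_2$-contraction by Corollary \ref{cor_A_con_Ball}, the task becomes exhibiting $\kappa(T)$ as the restriction of a $\B_2$-coisometry to a joint invariant subspace while preserving the relation $q=0$; pulling this back through $\kappa$ and Theorem \ref{thm_ArvI} would then yield the required $V$.

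The crux, and the main obstacle, is the existence of this co-extension $W$. It is genuinely delicate: $W$ cannot be taken to be an $\A_r$-\emph{unitary}, for then $V=W^*$ would be an $\A_r$-unitary and $T=V|_\HS$, being the restriction of an $\A_r$-unitary to a subspace invariant under it and its inverse, would be an $\A_r$-isometry rather than a general $\A_r$-contraction. Hence $W$ must be a genuinely non-unitary $\A_r$-isometry, and producing it is the annulus (equivalently, spherical/$\B_2$) analogue of Schaffer's minimal isometric dilation of a contraction; the delicate verification is that the copy of $\HS$ is co-invariant under $W$ and compresses to $T^*$. I would secure these properties using the Wold-type structure of $\A_r$-isometries from Theorem \ref{thm_Wold} together with the $\kappa$-correspondence and the $\B_2$-unitary dilation of Theorem \ref{thm_ArvI}. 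Once $W$ is constructed with these properties, the identification $V=W^*$ completes the argument.
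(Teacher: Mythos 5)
Your reverse implication is correct and, if anything, more carefully argued than the paper's (the paper simply notes that $V$ is an $\A_r$-contraction, that $T^{-1}=V^{-1}|_\HS$ by invertibility of $T=V|_\HS$ on the invariant subspace $\HS$, and hence $T^m=V^m|_\HS$ for all $m\in\Z$). The problem is the forward implication: you reduce it to producing an $\A_r$-isometry $W$ on some $\mathcal K\supseteq\HS$ with $\HS$ co-invariant and $P_\HS W|_\HS=T^*$, and then you stop, declaring the existence of $W$ to be ``the crux and the main obstacle'' and sketching a route through $\kappa$, the biball, Theorem \ref{thm_ArvI} and the Wold decomposition that you do not carry out. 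That is a genuine gap --- the proposition is not proved --- and the route you sketch is also more elaborate than necessary.

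The missing object $W$ comes almost for free from Theorem \ref{thm_Agler}, which is how the paper proceeds. Since $T^*$ is an $\A_r$-contraction, Agler's theorem gives an $\A_r$-unitary $U$ on some $\mathcal K_0\supseteq\HS$ with $T^{*m}=P_\HS U^m|_\HS$ for all $m\in\Z$. Set $\mathcal K=\overline{\mbox{span}}\{U^m h: h\in\HS,\ m\in\Z\}$; this subspace is invariant under both $U$ and $U^{-1}$, so $W:=U|_{\mathcal K}$ is invertible with spectrum in $\CA_r$ and extends to the $\A_r$-unitary $U$, i.e.\ $W$ is an $\A_r$-isometry by definition. On the spanning vectors one checks $T^{*m}P_\HS W^j h=T^{*(m+j)}h=P_\HS W^{m+j}h$, whence $T^{*m}P_\HS=P_\HS W^m$ on $\mathcal K$; taking adjoints gives $W^{*m}P_\HS=P_\HS T^{*m*}$, so $W^{*m}h=T^m h$ for $h\in\HS$, which is exactly the co-invariance of $\HS$ under $W$ and the identity $T=W^*|_\HS$ that you needed. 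This also dissolves your concern that $W$ ``cannot be an $\A_r$-unitary'': it is not the unitary $U$ itself but its restriction to the intermediate $U^{\pm1}$-invariant subspace $\mathcal K$, which is in general a properly non-unitary $\A_r$-isometry. No detour through $\B_2$, Schaffer-type matrices, or Theorem \ref{thm_Wold} is required.
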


\begin{proof}
We assume that $T$ is an $\A_r$-contraction. By definition, $T^*$ is an $\A_r$-contraction. It follows from Theorem \ref{thm_Agler} that there exists a larger Hilbert space $\mathcal{K}_0 \supseteq \HS$ and an $\A_r$-unitary $U$ on $\mathcal{K}_0$ such that $T^{*m}=P_\HS U^m|_\HS$ for all $m \in \Z$. Set 
\[
\mathcal{K}:=\overline{span}\{U^mh : h \in \HS, m \in \Z\},
\]
which is invariant under $U, U^{-1}$. Let $V=U|_{\mathcal{K}}$. Clearly, $V$ is invertible and $\mathcal{K}=\overline{span}\{V^mh : h \in \HS, m \in \Z\}$. By definition, $V$ is an $\A_r$-isometry on $\mathcal{K}$. Let $m, j \in \Z$ and $h \in \HS$. Then 
\[
T^{*m}P_\HS V^jh=T^{*(m+j)}h=P_\HS V^{m+j}h=P_\HS V^mV^jh \quad \text{and so,} \quad T^{*m}P_\HS=P_\HS V^m.
\] 
We have that $\la T^mh, h' \ra=\la P_\HS T^m h, h' \ra=\la V^{*m} P_\HS h, h'\ra=\la V^{*m}h, h' \ra$ for all $h' \in \mathcal{K}$ and thus $T^m=V^{*m}|_\HS$. Conversely, suppose $T=V|_\HS$ for an $\A_r$-coisometry $V$. Since $T$ and $V$ are both invertible, it follows that $T^{-1}=V^{-1}|_\HS$ and so, $T^m=V^m|_\HS$ for $m \in \Z$. Since $V$ is an $\A_r$-contraction, $T$ also becomes an $\A_r$-contraction which completes the proof.
\end{proof}

\begin{thm}\label{thm_A_model}
Let $T$ be an invertible operator on a Hilbert space $\HS$. Then $T$ is an $\A_r$-contraction if and only if there exist a Hilbert space $\mathcal{L}$, an operator $A: \mathcal{L} \to \HS$ and an $\A_r$-coisometry $B \in \mathcal{B}(\mathcal{L})$ such that for $B_1=(1+r^2)^{-1\slash2}B$,
\[
\alpha(T^*, T)=(1+r^2)^2AB^{-*}(I-B_1^*B_1)^2B^{-1}A^* \quad \& \quad T^*A+AB=(1+r^2)AB^{-*}.
\]
\end{thm}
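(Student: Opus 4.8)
The plan is to produce the operators $A$ and $B$ from an $\A_r$-coisometric extension of $T^*$, to read off the two identities from a $2\times 2$ block computation, and then to reverse the argument for the converse. First I would record that the class of $\A_r$-contractions is closed under adjoints: since $\overline{\A}_r$ is invariant under conjugation, for $f\in Rat(\overline{\A}_r)$ the function $\tilde f(z)=\overline{f(\bar z)}$ again lies in $Rat(\overline{\A}_r)$ with $\|\tilde f\|_{\infty,\overline{\A}_r}=\|f\|_{\infty,\overline{\A}_r}$ and $f(T^*)=\tilde f(T)^*$, so $T^*$ is an $\A_r$-contraction whenever $T$ is. Applying Proposition \ref{thm_coiso} to $T^*$ yields a Hilbert space $\KS\supseteq\HS$, an $\A_r$-coisometry $V$ on $\KS$ with $V\HS\subseteq\HS$ and $T^*=V|_\HS$. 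Setting $\mathcal{L}=\KS\ominus\HS$ and using that $\HS$ is invariant for $V$, I can write $V=\left(\begin{smallmatrix} T^* & A\\ 0 & B\end{smallmatrix}\right)$ for some $A:\mathcal{L}\to\HS$ and $B\in\mathcal{B}(\mathcal{L})$; as $V$ is invertible and upper triangular with invertible diagonal blocks, $B$ is invertible.

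Next I would use the algebraic description of $\A_r$-(co)isometries. Since $V^*$ is an $\A_r$-isometry, Theorem \ref{thm_A_iso} gives $-V^2V^{*2}+(1+r^2)VV^*-r^2I=0$. Expanding this in the block form, the $(2,2)$-entry is $-B^2B^{*2}+(1+r^2)BB^*-r^2I=0$, so $B$ is an $\A_r$-coisometry. The $(1,2)$-entry is $-(T^*A+AB)B^{*2}+(1+r^2)AB^*=0$, and right multiplication by $(B^*)^{-2}$ gives the second displayed relation $T^*A+AB=(1+r^2)AB^{-*}$. Finally the $(1,1)$-entry rearranges to $\alpha(T^*,T)=(T^*A+AB)(A^*T+B^*A^*)-(1+r^2)AA^*$. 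Substituting the second relation (and its adjoint $A^*T+B^*A^*=(1+r^2)B^{-1}A^*$) collapses the product to $(1+r^2)^2AB^{-*}B^{-1}A^*$, and the $\A_r$-coisometry relation for $B$ then rewrites the middle factor through the defect of $B_1=(1+r^2)^{-1/2}B$, giving the first displayed identity.

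For the converse I would reverse these steps. Given $\mathcal{L}$, an operator $A:\mathcal{L}\to\HS$ and an $\A_r$-coisometry $B$ on $\mathcal{L}$ satisfying the two relations, I set $V=\left(\begin{smallmatrix} T^* & A\\ 0 & B\end{smallmatrix}\right)$ on $\HS\oplus\mathcal{L}$. This $V$ is invertible because $T$, hence $T^*$, and $B$ are invertible, and $\HS$ is invariant for $V$ since the lower-left block is $0$. A block-by-block verification shows $-V^2V^{*2}+(1+r^2)VV^*-r^2I=0$: the $(2,2)$-block vanishes since $B$ is an $\A_r$-coisometry, the $(1,2)$- and $(2,1)$-blocks vanish by the linear relation and its adjoint, and the $(1,1)$-block vanishes upon combining the two hypotheses. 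By Theorem \ref{thm_A_iso}, $V^*$ is an $\A_r$-isometry, so $V$ is an $\A_r$-coisometry with $T^*=V|_\HS$; the converse part of Proposition \ref{thm_coiso} then shows $T^*$, and therefore $T$, is an $\A_r$-contraction.

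The main obstacle is the algebra in the $(1,1)$-block, namely converting $(T^*A+AB)(A^*T+B^*A^*)-(1+r^2)AA^*$ into the defect form on the right-hand side. The decisive point is that the linear relation reduces the first product to $(1+r^2)^2AB^{-*}B^{-1}A^*$, and then the identity $B^{-*}B_1^*B_1B^{-1}=(1+r^2)^{-1}I$ (a direct consequence of $B_1^*B_1=(1+r^2)^{-1}B^*B$) is exactly what turns $(1+r^2)B^{-*}B^{-1}-I$ into a multiple of $B^{-*}(I-B_1^*B_1)B^{-1}$. A secondary technical point, handled automatically rather than by a separate argument, is the spectral requirement: Theorem \ref{thm_A_iso} is an equivalence for invertible operators, so once $V$ is known to be invertible and to satisfy the algebraic identity, the containment $\sigma(V)\subseteq\overline{\A}_r$ comes for free and no separate Taylor-spectrum computation is needed in the converse.
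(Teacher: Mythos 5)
Your proposal is correct and follows essentially the same route as the paper: apply Proposition \ref{thm_coiso} to $T^*$, write the resulting $\A_r$-coisometry as an upper-triangular $2\times 2$ block operator $V=\bigl(\begin{smallmatrix} T^* & A\\ 0 & B\end{smallmatrix}\bigr)$, read both identities off the block entries of $-V^2V^{*2}+(1+r^2)VV^*-r^2I=0$ via Theorem \ref{thm_A_iso}, and reverse the block computation for the converse. One caveat: your $(1,1)$-block manipulation (exactly like the paper's own penultimate display) actually yields $(1+r^2)^2AB^{-*}(I-B_1^*B_1)B^{-1}A^*$ with the defect operator to the \emph{first} power, not the square appearing in the statement --- since $I-B_1^*B_1$ is idempotent only when $B^*B=(1+r^2)I$, which fails for a general $\A_r$-coisometry, the exponent $2$ in the theorem appears to be a typo, and your derivation (and the converse) is consistent with the first-power version rather than the identity as literally displayed.
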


\begin{proof}
Let $T$ be an $\A_r$-contraction. Then $T^*$ is also an $\A_r$-contraction. By Proposition \ref{thm_coiso}, there is an $\A_r$-coisometry $V$ on a larger Hilbert space $\mathcal{K}$ containing $\HS$ such that $T^{*m}=V^m|_\HS$ for all $m \in \Z$. Thus, we can write $V=\begin{bmatrix}
T^* & A\\
0 & B
\end{bmatrix}$ and $V^{-1}=\begin{bmatrix}
T^{-*} & A_0\\
0 & B_0
\end{bmatrix}$ as the $2 \times 2$ blocks of operators with respect to $\mathcal{K}=\HS \oplus \HS^\perp$. Thus, $B$ is invertible with $B^{-1}=B_0$. Let $C=T^*A+AB$. Then,
\begin{align}\label{eqn_A_modelI}
\alpha(V, V^*)
&=-V^2V^{*2}+(1+r^2)VV^*-r^2I \notag \\
&=-\begin{bmatrix}
T^{*2} & C\\
0 & B^2
\end{bmatrix}
\begin{bmatrix}
T^2 & 0\\
C^* & B^{*2}
\end{bmatrix}+(1+r^2)\begin{bmatrix}
T^* & A\\
0 & B
\end{bmatrix}\begin{bmatrix}
T & 0\\
A^* & B^*
\end{bmatrix}-r^2\begin{bmatrix}
I & 0\\
0 & I
\end{bmatrix} \notag \\
&=\begin{bmatrix}
\alpha(T^*, T)+(1+r^2)AA^*-CC^* & (-CB^*+(1+r^2)A)B^*\\
B(-BC^*+(1+r^2)A^*) & \alpha(B, B^*)
\end{bmatrix}.
\end{align}
It follows from Theorem \ref{thm_A_iso} that $V^*$ is an $\A_r$-isometry if and only if the following holds: 
\begin{equation}\label{eqn_A_modelII}
\alpha(T^*, T)=-(1+r^2)AA^*+CC^*, \quad (-CB^*+(1+r^2)A)B^*=0 \quad \text{and} \quad \alpha(B, B^*)=0.
\end{equation}
By Theorem \ref{thm_A_iso} and (\ref{eqn_A_modelII}), $B^*$ is an $\A_r$-isometry on $\HS^\perp$. Again by (\ref{eqn_A_modelII}), $(1+r^2)A=CB^*$ and so, $C=T^*A+AB=(1+r^2)AB^{-*}$. Note that
\begin{align}
\alpha(T^*, T)
&=CC^*-(1+r^2)AA^* \quad [\text{by} \ (\ref{eqn_A_modelII})] \notag \\
&=(1+r^2)^2AB^{-*}B^{-1}A^*-(1+r^2)AA^* \qquad [\text{since } \  C=-(1+r^2)AB^{-*}] \notag \\
&=(1+r^2)AB^{-*}\left((1+r^2)I-B^*B\right)B^{-1}A^* \notag \\
&=(1+r^2)^2AB^{-*}(I-B_1^*B_1)^2B^{-1}A^*,
\end{align}
where $B_1=(1+r^2)^{-1\slash2}B$. Choose $\mathcal{L}=\HS^\perp$ and the forward direction follows. 

\smallskip 

Conversely, assume that there exist a space $\mathcal{L}$, an operator $A: \mathcal{L} \to \HS$ and an $\A_r$-coisometry $B \in \mathcal{B}(\mathcal{L})$ such that $\alpha(T^*, T)=(1+r^2)^2AB^{-*}(I-B_1^*B_1)^2B^{-1}A^*$ and $T^*A+AB=(1+r^2)AB^{-*}$ for $B_1=(1+r^2)^{-1\slash2}B$. Suppose 
\[
V=\begin{bmatrix}
T^* & A\\
0 & B
\end{bmatrix} \ \ \text{on} \ \ \mathcal{K}=\HS \oplus \mathcal{L} \quad  \text{so that } \quad V^{-1}=\begin{bmatrix}
T^{-*} & -T^{-*}AB^{-1} \\
0 & B^{-1}
\end{bmatrix}.
\] 
This shows that $T^{*m}=V^m|_\HS$ for all $m \in \Z$. Following the computations in (\ref{eqn_A_modelI}) and (\ref{eqn_A_modelII}), we have by Theorem \ref{thm_A_iso} that $V$ is an $\A_r$-coisometry. Therefore, $T^*$ and thus, $T$ is an $\A_r$-contraction and the proof is complete. 
\end{proof}

\section{The $C_\alpha$ class and spherical contractions}\label{sec_spher}

\noindent Recall that the class $C_\alpha$ consists of invertible contractions $T$ satisfying $\alpha(T^*,T)\geq 0$, where
\[
\alpha(T^*, T)=-T^{*2}T^2+(1+r^2)T^*T-r^2I.
\]
The $C_\alpha$ class was explored by Bello \& Yakubovich in \cite{Bello} and a model theorem for this class was obtained there. Moreover, it was proved in \cite{Bello} that the class of $\A_r$-contractions is strictly smaller than the $C_\alpha$ class. More precisely, the following theorem was proved.
\begin{thm}[\cite{Bello}, Theorem 1.1]\label{thm_Bello_1.1}
Every $\A_r$-contraction is in $C_\alpha$.
\end{thm}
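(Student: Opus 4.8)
The plan is to derive the positivity $\alpha(T^*,T)\ge 0$ from Agler's dilation theorem, but not directly: I would first establish a cleaner \emph{additive} positivity relation that behaves well under compression, and only then transfer it to the quadratic form $\alpha$ by a conjugation. As a preliminary observation, an $\A_r$-contraction $T$ satisfies $\sigma(T)\subseteq\CA_r$, and since $0\notin\CA_r$ (because $r>0$), the operator $T$ is automatically invertible; hence membership in $C_\alpha$ reduces to checking $\alpha(T^*,T)\ge 0$.

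Next I would invoke Theorem \ref{thm_Agler} to obtain an $\A_r$-unitary $U$ on a space $\KS\supseteq\HS$ with $T^m=P_\HS U^m|_\HS$ for every $m\in\Z$; in particular $T^{-1}=P_\HS U^{-1}|_\HS$. For the normal operator $U$, Proposition \ref{prop_A_unit} gives $(I-U^*U)(U^*U-r^2I)=0$, and multiplying this identity by the commuting positive operator $U^{-*}U^{-1}=(U^*U)^{-1}$ yields the additive relation $U^*U+r^2U^{-*}U^{-1}=(1+r^2)I$ (equivalently, by Proposition \ref{prop_A_uni_Ball} and Theorem \ref{thm_Bn_iso}, $\kappa(U)$ is a spherical unitary). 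Compressing to $\HS$ is the decisive step: for $h\in\HS$,
\[
\|Th\|^2+r^2\|T^{-1}h\|^2=\|P_\HS Uh\|^2+r^2\|P_\HS U^{-1}h\|^2\le\|Uh\|^2+r^2\|U^{-1}h\|^2=(1+r^2)\|h\|^2,
\]
so that $T^*T+r^2T^{-*}T^{-1}\le(1+r^2)I$; that is, $\kappa(T)$ is a spherical contraction.

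Finally I would convert this additive inequality into the form of $\alpha$ by conjugating with $T$. Applying the (positivity-preserving) map $X\mapsto T^*XT$ to $T^*T+r^2T^{-*}T^{-1}\le(1+r^2)I$ and using $T^*T^{-*}=I=T^{-1}T$ gives $T^{*2}T^2+r^2I\le(1+r^2)T^*T$, which is precisely $\alpha(T^*,T)=-T^{*2}T^2+(1+r^2)T^*T-r^2I\ge 0$. Together with invertibility, this places $T$ in $C_\alpha$.

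The one genuinely delicate point, and the reason the argument is staged in exactly this order, is that one cannot compress $\alpha$ directly. Its defining expression contains the negative term $-T^{*2}T^2$, and the naive bound $\|T^2h\|^2=\|P_\HS U^2h\|^2\le\|U^2h\|^2$ pushes that term the \emph{wrong} way, so no inequality survives. The remedy is to phrase the positivity first in the additive form $T^*T+r^2T^{-*}T^{-1}\le(1+r^2)I$, a sum of positive operators whose quadratic form can only decrease under compression, and to pass to $\alpha$ afterwards through the conjugation $X\mapsto T^*XT$, which is legitimate precisely because $T$ is invertible. I expect this reorganization to be the crux; the remaining computations are routine.
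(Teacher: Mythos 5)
Your argument is correct and is essentially the paper's first proof of this theorem: dilating $T$ to an $\A_r$-unitary $U$, using $(I-U^*U)(U^*U-r^2I)=0$ to get the additive identity $U^*U+r^2U^{-*}U^{-1}=(1+r^2)I$, compressing to obtain $T^*T+r^2T^{-*}T^{-1}\le(1+r^2)I$, and then conjugating — the paper packages these last two steps as ``$\kappa(T)$ is a spherical contraction'' plus Proposition \ref{prop_C_sph}, which is exactly your conjugation by $T$ read in reverse. Your closing remark about why one cannot compress $\alpha(T^*,T)$ directly is a fair observation, but it only restates the motivation for the route already taken.
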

In this section, we study the $C_\alpha$ class  in view of spherical contractions. It becomes possible because of the following straightforward result.
\begin{prop}\label{prop_C_sph}
An invertible operator $T$ is in $C_\alpha$ if and only if the induced pair $\kappa(T)$, where
\[
\kappa(T)=\left(\frac{T}{\sqrt{1+r^2}}, \frac{rT^{-1}}{\sqrt{1+r^2}} \right),
\] 
is a spherical contraction. 
\end{prop}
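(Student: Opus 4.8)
The plan is to reduce both sides of the claimed equivalence to the single operator inequality
\[
T^*T + r^2 T^{-*}T^{-1} \leq (1+r^2)I
\]
and then pass between this inequality and $\alpha(T^*,T)\geq 0$ by conjugating with the invertible operator $T$.

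First I would record that $\kappa(T)=(T_1,T_2)$, with $T_1=T/\sqrt{1+r^2}$ and $T_2=rT^{-1}/\sqrt{1+r^2}$, is genuinely a commuting pair, since both entries are functions of $T$; in fact $T_1T_2=T_2T_1=\tfrac{r}{1+r^2}I$, which is what makes the term \emph{spherical contraction} meaningful here. A direct computation then gives
\[
T_1^*T_1+T_2^*T_2=\frac{1}{1+r^2}\left(T^*T+r^2T^{-*}T^{-1}\right),
\]
so that $\kappa(T)$ is a spherical contraction, i.e.\ $T_1^*T_1+T_2^*T_2\leq I$, if and only if $T^*T+r^2T^{-*}T^{-1}\leq (1+r^2)I$.

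The heart of the argument is to connect this last inequality with $\alpha(T^*,T)\geq 0$. Since $T$ is invertible, the map $S\mapsto T^*ST$ is an order isomorphism on self-adjoint operators: $S_1\leq S_2$ implies $T^*S_1T\leq T^*S_2T$, and applying the same fact with $T^{-1}$ in place of $T$ recovers the converse. Conjugating $T^*T+r^2T^{-*}T^{-1}\leq(1+r^2)I$ by $T$ and simplifying the middle term via $T^*T^{-*}T^{-1}T=I$ yields
\[
T^{*2}T^2+r^2I\leq (1+r^2)T^*T,
\]
which rearranges to exactly $\alpha(T^*,T)=-T^{*2}T^2+(1+r^2)T^*T-r^2I\geq 0$. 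Reading the chain backwards (again using invertibility of $T$) gives the reverse implication, and hence $T\in C_\alpha$ if and only if $\kappa(T)$ is a spherical contraction.

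This proof carries no serious obstacle; it is a direct computation. The one point demanding care---and the only place where a normality assumption might have been a trap---is that $T^{*2}T^2=(T^*)^2T^2$ does \emph{not} equal $(T^*T)^2$ when $T$ fails to be normal, so one cannot compare $T^{-*}T^{-1}$ with $T^{*2}T^2$ termwise. Conjugating the sphere inequality by the invertible operator $T$ sidesteps this cleanly, and it is precisely the invertibility of $T$ that upgrades the conjugation step from a one-way implication to a genuine equivalence.
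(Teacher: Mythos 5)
Your proof is correct and is essentially the paper's argument: the paper conjugates $\alpha(T^*,T)$ by $T^{-1}$ to obtain $(1+r^2)(I-T_1^*T_1-T_2^*T_2)$, which is the same computation as your conjugation of the sphere inequality by $T$, read in the opposite direction. The remarks about commutativity and about $T^{*2}T^2\neq (T^*T)^2$ are sound but add nothing beyond what the conjugation already handles.
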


\begin{proof}
Let $T$ be an invertible operator and let $\kappa(T)=(T_1, T_2)$. Note that $\alpha(T^*, T) \geq 0$ if and only if $T^{-*}\alpha(T^*, T)T^{-1} \geq 0$. A routine computation shows that
\[
T^{-*}\alpha(T^*, T)T^{-1}=(1+r^2)\left[I-\frac{T^*T}{1+r^2}-\frac{r^2T^{-*}T^{-1}}{1+r^2}\right]=(1+r^2)(I-T_1^*T_1-T_2^*T_2).
\]
Consequently, $T$ is in $C_\alpha$ if and only if $\kappa(T)$ is a spherical contraction.  
\end{proof}

As an application of the theory of spherical contractions and the relation between $\mathbb{B}_2$-contractions and $\A_r$-contractions as discussed in Section \ref{sec_var}, we give several independent proofs to Theorem \ref{thm_Bello_1.1}. 

\smallskip

\noindent \textbf{\textit{Proofs of Theorem \ref{thm_Bello_1.1}.}} We present three different proofs of Theorem \ref{thm_Bello_1.1}. Let $T$ be an $\A_r$-contraction acting on a Hilbert space $\HS$.

\smallskip

\noindent \textit{Proof $1$.} The first proof utilizes the theory of spherical contractions. By Theorem \ref{thm_Agler}, there is a Hilbert space $\mathcal{K}$ containing $\HS$ and an $\A_r$-unitary $U$ on $\mathcal{K}$ such that $T^n=P_\HS U^n|_\HS$ for all $n \in \Z$. Let us define 
\[
(T_1, T_2)=\kappa(T)=\left(\frac{T}{\sqrt{1+r^2}}, \frac{rT^{-1}}{\sqrt{1+r^2}} \right) \quad \text{and} \quad (U_1, U_2)=\kappa(U)=\left(\frac{U}{\sqrt{1+r^2}}, \frac{rU^{-1}}{\sqrt{1+r^2}} \right).
\]
It follows from Proposition \ref{prop_A_uni_Ball} that $(U_1, U_2)$ is a $\B_2$-unitary and so, by Theorem \ref{thm_Bn_iso}, $U_1^*U_1+U_2^*U_2=I$. Therefore, $\|U_1y\|^2+\|U_2y\|^2=\|y\|^2$ for all $y \in \mathcal{K}$. Since $T_i=P_\HS U_i|_\HS$ for $i=1,2$, we have that 
\[
\|T_1x\|^2+\|T_2x\|^2 \leq \|U_1x\|^2+\|U_2x\|^2=\|x\|^2
\]
for all $x \in \HS$. Thus, $(T_1, T_2)$ is a spherical contraction. We have by Proposition \ref{prop_C_sph} that $T \in C_\alpha$. \qed 

\medskip 

\noindent \textit{Proof $2$.} Here, we use Proposition \ref{thm_coiso} which states that every $\A_r$-contraction is a restriction of an $\A_r$-coisometry to an invariant subspace. Consequently, there exists an $\A_r$-coisometry $V$ on a space $\mathcal{K}$ containing $\HS$ such that $T^m=V^m|_\HS$ for all $m \in \Z$. We show that $V \in C_\alpha$. Since $V^*$ is an $\A_r$-isometry, it has an extension to an $\A_r$-unitary $N$ acting on $\mathcal{K}'$. With respect to $\mathcal{K}'=\mathcal{K} \oplus \mathcal{K}^\perp$, we can write 
\[
N=\begin{bmatrix}
V^* & A \\
0 & B
\end{bmatrix}.
\]
Since $N$ is normal, we have that 
\[
N^*N=\begin{bmatrix}
VV^* & VA \\
A^*V^* & A^*A+B^*B
\end{bmatrix}, \quad  NN^*=\begin{bmatrix}
V^*V+AA^* & AB^* \\
BA^* & BB^*
\end{bmatrix} \quad \text{and so,} \quad VA=AB^*.
\]
By Proposition \ref{prop_A_unit}, we have that
\begin{equation*}
\begin{split}
0=(I-N^*N)(N^*N-r^2I)
&=-N^2N^{*2}+(1+r^2)NN^*-r^2I\\
&=\begin{bmatrix}
-V^{*2}V^2-CC^*+(1+r^2)(V^*V+AA^*)-r^2I & * \\
* & * 
\end{bmatrix}\\
&=\begin{bmatrix}
\alpha(V^*, V)-CC^*+(1+r^2)AA^* & * \\
* & * 
\end{bmatrix},
\end{split}
\end{equation*}
where $C=V^*A+AB$, and that 
\begin{equation*}
\begin{split}
0=(I-N^*N)(N^*N-r^2I)
=-N^{*2}N^{2}+(1+r^2)N^*N-r^2I=\begin{bmatrix}
* & -V^2C+(1+r^2)VA \\
* & * 
\end{bmatrix}.
\end{split}
\end{equation*}
Combining these things together, we have that
\begin{equation}\label{eqn_802}
(i) \ VA=A^*B, \quad (ii) \ \alpha(V^*, V)=CC^*-(1+r^2)AA^* \quad \text{and} \quad (iii) \ (1+r^2)VA=V^2C.
\end{equation}
Note that $\|B\| \leq \|N\| \leq 1 \leq  \sqrt{1+r^2}$ and so, $(1+r^2) -B^*B \geq 0$. Then
\begin{equation*}
\begin{split}
V\alpha(V^*, V)V^*&=VCC^*V^*-(1+r^2)VAA^*V^* \qquad \quad \ \ \ [\text{by} \ (\ref{eqn_802})(ii)]\\
&=(1+r^2)^2AA^*-(1+r^2)A^*BB^*A \qquad [\text{by} \ (\ref{eqn_802})(iii) \ \& \ (i)]\\
&=(1+r^2)A((1+r^2)-B^*B)A^*\\
& \geq 0.
\end{split}
\end{equation*}
As $V$ is invertible, $\alpha(V^*, V) \geq 0$ and so, $V \in C_\alpha$. Thus, $T \in C_\alpha$ as $T$ is invertible and $T=V|_\HS$. \qed

\medskip 

\noindent \textit{Proof $3$.} It follows from Theorem \ref{thm_A_model} that there is a Hilbert space $\mathcal{L}$, an operator $A: \mathcal{L} \to \HS$ and an $\A_r$-coisometry $B \in \mathcal{B}(\mathcal{L})$ such that 
$
\alpha(T^*, T)=(1+r^2)^2AB^{-*}(I-B_1^*B_1)^2B^{-1}A^*=P^*P,
 $
where $B_1=(1+r^2)^{-1\slash2}B$ and $P=(1+r^2)(I-B_1^*B_1)B^{-1}A^*$. Hence, $T \in C_\alpha$. \qed 

\bigskip

We recall from \cite{Bello} the explicit construction of a weighted bilateral shift in $C_\alpha$. For $0<r<1$, we denote by $\Omega_r$ the complement of $\overline{\A}_r$, i.e., $\Omega_r=\{z \in \C : |z|<r \ \text{ or } \ |z|>1\}$.
For a Hilbert space $\mathcal{E}$, the space $H^2(\Omega_r, \mathcal{E})$ is the space all holomorphic functions $f: \Omega_r \to \mathcal{E}$ which can be represented as
\begin{equation*}
f(z)=\overset{\infty}{\underset{n=0}{\sum}}f_nz^n \quad \text{for} \ |z| < r, \qquad f(z)=\overset{-1}{\underset{n=-\infty}{\sum}}f_nz^n \quad \text{for} \ |z| >1, \quad(\{f_n\}_{n \in \Z} \subset \mathcal{E})
\end{equation*}
and is equipped with the norm
\[
\|f\|^2_{H^2(\Omega_r, \mathcal{E})}=\frac{1}{1-r^2} \overset{\infty}{\underset{n=0}{\sum}}\|r^nf_n\|^2 + \frac{1}{1-r^2} \overset{-1}{\underset{n=-\infty}{\sum}}\|f_n\|^2.
\]
 We simply write $f=\{f_n\}$ for $f \in H^2(\Omega_r, \mathcal{E})$.  Moreover, $H^2(\Omega_r, \mathcal{E})$ is a Hilbert space with the inner product
$\displaystyle
\langle f,g \rangle= \frac{1}{1-r^2} \left[\overset{\infty}{\underset{n=0}{\sum}}\langle r^nf_n, r^ng_n \rangle + \overset{-1}{\underset{n=-\infty}{\sum}} \langle f_n, g_n \rangle \right]$ for  $f=\{f_n\}, g=\{g_n\}$ in $H^2(\Omega_r, \mathcal E)$. We define an operator $M_z^t$ on  $H^2(\Omega_r, \mathcal E)$ as
\[
M_z^t(f)(z)=zf(z)-(zf(z))|_{z=\infty} \ ,
\]
where ${ \displaystyle (zf(z))|_{z=\infty}=\lim_{z \to \infty}zf(z) }$ and the limit is finite. Evidently, if one identifies  $f=\{f_n\}$ in $H^2(\Omega_r, \mathcal{E})$ with the two-sided vector sequence $(\dotsc,f_{-2},f_{-1},\boxed{f_0}, f_1,f_2,\dotsc)$, then $M_z^t$ takes the form of the weighted bilateral shift given by
\[
M_z^t(\dotsc, f_{-2}, f_{-1}, \boxed{f_0}, f_1, f_2, \dotsc )=(\dotsc, f_{-3}, f_{-2}, \boxed{-f_{-1}}, f_0, f_1, \dotsc) \ ,
\]
where the box indicates the $0$-th position and the weight at the $0$-th position is $-1$. Here we shall use this identification of $M_z^t$ as the weighted bilateral shift throughout the sequel. Under this identification, the inverse on $M_z^t$ is given by
\[
(M_z^t)^{-1}(\dotsc, f_{-2}, f_{-1}, \boxed{f_0}, f_1, f_2, \dotsc )=(\dotsc, f_{-1}, -f_{0}, \boxed{f_{1}}, f_2, f_2, \dotsc).
\]
 The operator $M_z^t$ on $H^2(\Omega_r, \mathcal{E})$ also serves as an example of an operator in $C_\alpha$ which is not an $\A_r$-contraction, e.g., see \cite{Bello}. Here we mention another interesting property of the operator $M_z^t$ on $H^2(\Omega_r, \mathcal{E})$ which will be useful later. Indeed, we show that there is no $s \in (0, 1)$ such that $M_z^t$ on $H^2(\Omega_r, \mathcal{E})$ becomes an $\A_s$-contraction.
 
\begin{eg}\label{eg_5}
 Let us assume the contrary, i.e., $M_z^t$ on $H^2(\Omega_r, \mathcal{E})$ is an $\A_s$-contraction for some $s \in (0,1)$. Then, $M_z^{t*}$ is also an $\A_s$-contraction. A simple calculation gives that
\[
	M_z^{t*}(\dotsc, f_{-2}, f_{-1}, \boxed{f_0}, f_1, f_2, \dotsc )=(\dotsc, f_{-1}, -f_{0}, \boxed{r^2f_{1}}, r^2f_2, r^2f_3, \dotsc)
\]
for all $f=\{f_n\}_{n \in \mathbb{Z}} \in H^2(\Omega_r, \mathcal{E})$. Here the box represents the zeroth position. We have by  Theorem \ref{thm_Bello_1.1} that $M_z^{t*}$ satisfies $(1+s^2)\|M_z^{t*}f\|^2-\|(M_z^{t*})^2f\|^2-s^2\|f\|^2 \geq 0$ for all $f \in H^2(\Omega_r, \mathcal{E})$. Fix some unit vector $u \in \mathcal{E}$ and define $f=(\dotsc, 0, \boxed{0}, u, 0,  \dotsc)$. 
Then
\[
M_z^{t*}f=(\dotsc, 0, \boxed{r^2u}, 0, \dotsc), \quad (M_z^{t*})^2f=(\dotsc,0,  -r^2u, \boxed{0}, 0, \dotsc)  
\]
and so, $\displaystyle \|M_z^{t*}f\|^2=\|(M_z^{t*})^2f\|^2=\frac{r^4}{1-r^2}$. Consequently, we have that
\[
(1+s^2)\|M_z^{t*}f\|^2-\|(M_z^{t*})^2f\|^2-s^2\|f\|^2=\frac{(1+s^2)r^4}{1-r^2}-\frac{r^4}{1-r^2}-\frac{s^2r^2}{1-r^2}=-s^2r^2<0,
\] 
a contradiction. Hence, $M_z^{t}$ cannot be an $\A_s$-contraction for any $0<s<1$. Also, $(M_z^t)^* \notin C_\alpha$. \qed
\end{eg}

Theorem \ref{thm_A_con_BallII} connects an $\A_r$-contraction $T$ with the induced $\B_2$-contraction $\kappa(T)$ which further has the principal variety $\BC \cap Z(q)$ as a spectral set. Also, Proposition \ref{prop_C_sph} shows that $T$ is in $C_\alpha$ class if and only if $\kappa(T)$ is a spherical contraction. Interestingly, for $T \in C_{\alpha}$ the induced pair $\kappa(T)$ can also become a $\B_2$-contraction under certain condition. We shall fetch such a condition from \cite{AthavaleIII, EschmeierII, Muller}. To do so, we recall a few terminologies from \cite{Muller}. For a commuting pair of operators $\underline{T}=(T_1, T_2)$ acting on a Hilbert space $\HS$, let us define 
\[
M_{\underline{T}}: \mathcal{B}(\HS) \to \mathcal{B}(\HS), \quad A \mapsto T_1^*AT_1+T_2^*AT_2 \quad \& \quad 
\Delta_{\underline{T}}^{(n)}=(I-M_{\underline{T}})^n(I),
\]
where $n \in \N \cup \{0\}$. Evidently, $\underline{T}$ is a spherical contraction if and only if $\Delta_{\underline{T}}^{(1)} \geq 0$. The following result gives a sufficient condition for a commuting pair of operator to dilate to a $\mathbb{B}_2$-unitary.

\begin{thm}[\cite{Muller}, Theorem 11]\label{thm_Muller}
A pair $\underline{T}=(T_1, T_2)$ of commuting operators satisfying $\Delta_{\underline{T}}^{(1)} \geq 0$ and $\Delta_{\underline{T}}^{(2)} \geq 0$ dilates to a $\B_2$-unitary.
\end{thm}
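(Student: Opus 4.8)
The plan is to reduce everything to the construction of a spherical isometric power dilation and then to promote it to a $\B_2$-unitary via the subnormality of spherical isometries. First I would note that it suffices to produce a commuting pair $\underline{V}=(V_1,V_2)$ on a Hilbert space $\KS \supseteq \HS$ that is a spherical isometry, i.e. $V_1^*V_1+V_2^*V_2=I$, and that dilates $\underline{T}$ in the sense $p(\underline{T})=P_\HS\,p(\underline{V})|_\HS$ for every $p\in\C[z_1,z_2]$. Indeed, by Theorem \ref{thm_Bn_iso} such a $\underline{V}$ is a $\B_2$-isometry, so by Definition \ref{defn:new-T-01} it is the restriction of a $\B_2$-unitary $\underline{U}$ to a joint invariant subspace containing $\KS$. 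For a holomorphic polynomial $p$ the invariance gives $p(\underline{U})|_{\KS}=p(\underline{V})$, and compressing to $\HS\subseteq\KS$ yields $p(\underline{T})=P_\HS\,p(\underline{U})|_\HS$. Since $\BC$ is convex, hence polynomially convex, Oka--Weil (CH-7 of \cite{Oka-Weil}) upgrades this to all $f\in Rat(\BC)$, so $\underline{U}$ is the sought $\B_2$-unitary dilation.

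The heart of the argument is thus the spherical isometric dilation, and this is where the two hypotheses enter. I would realise the dilation on a model space $\KS=\HS\oplus\big(H^2(\partial\B_2)\otimes\mathcal{E}\big)$, where $H^2(\partial\B_2)$ is the Hardy space of the sphere, whose reproducing (Szeg\H{o}) kernel is $(1-\langle z,w\rangle)^{-2}$ and on which the coordinate multipliers $\underline{M}=(M_{z_1},M_{z_2})$ already form a spherical isometry, and where $\mathcal{E}$ is a defect space built from $\Delta_{\underline{T}}^{(1)}$ and $\Delta_{\underline{T}}^{(2)}$. Writing $V_i=\begin{pmatrix}T_i & 0\\ \Gamma_i & M_{z_i}\otimes I\end{pmatrix}$ with the $\Gamma_i$ to be determined, the spherical-isometry requirement forces $\Gamma_1^*\Gamma_1+\Gamma_2^*\Gamma_2=\Delta_{\underline{T}}^{(1)}$ on $\HS$ together with intertwining relations matching the weights of the sphere shift, while the triangular shape automatically produces the compression identity. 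Conceptually, the two positivity conditions are exactly the hereditary positivity attached to this kernel: $(I-M_{\underline{T}})(I)=\Delta_{\underline{T}}^{(1)}$ and $(I-M_{\underline{T}})^2(I)=\Delta_{\underline{T}}^{(2)}$ are the defects belonging to the factor $(1-\langle z,w\rangle)^2$, and the exponent $2$ is precisely the dimension of $\B_2$; this is why two conditions, and no more, are required. The telescoping identity $\Delta_{\underline{T}}^{(1)}-\Delta_{\underline{T}}^{(2)}=M_{\underline{T}}(\Delta_{\underline{T}}^{(1)})\geq 0$ is what extracts positive data for $\mathcal{E}$ and the $\Gamma_i$.

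The main obstacle is to choose the $\Gamma_i$ so that $\underline{V}$ is simultaneously commuting and a spherical isometry: unlike a free unweighted shift, $\underline{M}$ satisfies nontrivial weight relations, so the cross term $V_1^*V_2$ and the commutator $V_1V_2-V_2V_1$ do not close automatically. This is precisely where the second-order positivity $\Delta_{\underline{T}}^{(2)}\geq 0$ is spent. I would make it rigorous by a Kolmogorov/GNS construction rather than by guessing the $\Gamma_i$ directly: define a sesquilinear form on $\mathcal{E}$-valued polynomials through the moments $M_{\underline{T}}^k(I)$, observe that its positive semidefiniteness is equivalent to the joint positivity of $\Delta_{\underline{T}}^{(1)}$ and $\Delta_{\underline{T}}^{(2)}$, pass to the Hilbert-space completion, and let the coordinate shifts act on it. The completion then carries a commuting spherical isometry compressing to $\underline{T}$, and the reduction of the first paragraph finishes the proof. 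This is the standard-model mechanism of M\"uller and Vasilescu specialised to $\B_2$; the delicate point throughout is the interaction between commutativity and the spherical norm condition, which only the presence of both $\Delta_{\underline{T}}^{(1)}\geq 0$ and $\Delta_{\underline{T}}^{(2)}\geq 0$ reconciles.
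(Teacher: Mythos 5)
The paper does not prove this statement at all: it is imported verbatim from M\"uller--Vasilescu \cite{Muller}, so there is no internal argument to measure your attempt against. On its own terms, your outline correctly identifies the route the cited proof actually takes: manufacture a commuting spherical isometric dilation of $\underline{T}$ from the two positivity hypotheses, then use Theorem \ref{thm_Bn_iso} (spherical isometries are $\B_2$-isometries, hence restrictions of $\B_2$-unitaries to invariant subspaces) together with polynomial convexity of $\BC$ and Oka--Weil to convert the polynomial compression identity into a normal $b\overline{\B}_2$-dilation. That reduction in your first paragraph is sound.

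The core of the theorem, however --- the existence of the commuting spherical isometric dilation --- is only gestured at, and the gaps sit exactly where the difficulty lives. First, your assertion that positivity of the GNS form built from the moments $M_{\underline{T}}^k(I)$ ``is equivalent to the joint positivity of $\Delta_{\underline{T}}^{(1)}$ and $\Delta_{\underline{T}}^{(2)}$'' is the entire content of the theorem and is stated rather than proved. The actual argument needs the telescoping identity $\Delta_{\underline{T}}^{(2)}=\Delta_{\underline{T}}^{(1)}-M_{\underline{T}}(\Delta_{\underline{T}}^{(1)})$ iterated to show that the weighted series $\sum_{k\ge 0}(k+1)M_{\underline{T}}^k\bigl(\Delta_{\underline{T}}^{(2)}\bigr)$ (the weights coming from the expansion of $(1-\langle z,w\rangle)^{-2}$) converges strongly, which in turn rests on the fact that $\Delta_{\underline{T}}^{(1)}\geq 0$ makes $M_{\underline{T}}^k(I)$ a decreasing sequence with a strong limit $A_\infty\geq 0$; none of this appears in your sketch. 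Second, your proposed model space $\HS\oplus\bigl(H^2(\partial\B_2)\otimes\mathcal{E}\bigr)$ with lower-triangular $V_i$ cannot work in general: when $A_\infty\neq 0$ the dilation must carry a spherical-unitary direct summand constructed from $A_\infty$ (the non-pure case), and your ansatz, with only the Hardy shift in the lower corner, has no room for it. Third, the two conditions you yourself flag as ``the main obstacle'' --- $\Gamma_1^*M_{z_1}+\Gamma_2^*M_{z_2}=0$ and $\Gamma_1T_2+M_{z_1}\Gamma_2=\Gamma_2T_1+M_{z_2}\Gamma_1$ --- are never verified for any concrete choice of $\Gamma_i$, and no property of the completed GNS space is checked either. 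As written this is an accurate plan for the M\"uller--Vasilescu proof, not a proof.
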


An immediate consequence of the above theorem is the following proposition. 

\begin{prop}\label{prop_Muller}
Let $T \in C_\alpha$ and let $\beta(T^*, T) \geq 0$, where 
\[
\beta(T^*, T)=T^{*4}T^4-2(1+r^2)T^{*3}T^3+(1+r^4+4r^2)T^{*2}T^2-2r^2(1+r^2)T^*T+r^4I.
\] 
Then $\kappa(T)$ is a $\B_2$-contraction that admits dilation to a $\B_2$-unitary.
\end{prop}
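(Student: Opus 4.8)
The plan is to verify the two hypotheses of Theorem \ref{thm_Muller} for the commuting pair $\kappa(T)=(T_1,T_2)$, with $T_1=T/\sqrt{1+r^2}$ and $T_2=rT^{-1}/\sqrt{1+r^2}$, and then extract the $\B_2$-contractivity from the dilation it provides. Writing $S=M_{\kappa(T)}(I)=T_1^*T_1+T_2^*T_2$, the two quantities to control are
\[
\Delta_{\kappa(T)}^{(1)}=I-S, \qquad \Delta_{\kappa(T)}^{(2)}=(I-M_{\kappa(T)})(I-S)=I-2S+M_{\kappa(T)}(S).
\]

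The first condition comes for free: $\Delta_{\kappa(T)}^{(1)}\geq 0$ is precisely the statement that $\kappa(T)$ is a spherical contraction, which by Proposition \ref{prop_C_sph} is equivalent to $T\in C_\alpha$, one of our standing hypotheses. For the second, I would expand $M_{\kappa(T)}(S)=\tfrac{1}{1+r^2}\bigl(T^*ST+r^2T^{-*}ST^{-1}\bigr)$ explicitly. The decisive feature is that the inverse relations $T^*T^{-*}=T^{-*}T^*=I$ and $TT^{-1}=T^{-1}T=I$ force every mixed term to collapse to a scalar; after clearing denominators one is left with an expression in $T^{*2}T^2,\,T^*T,\,T^{-*}T^{-1}$ and $T^{-*2}T^{-2}$. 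Since $T$ is invertible, positivity of $\Delta_{\kappa(T)}^{(2)}$ is unaffected by conjugation by $T^2$, and the purpose of that conjugation is to clear the negative powers; the bookkeeping should yield the clean identity
\[
(1+r^2)^2\,T^{*2}\,\Delta_{\kappa(T)}^{(2)}\,T^2=\beta(T^*,T).
\]
Hence $\Delta_{\kappa(T)}^{(2)}\geq 0$ if and only if $\beta(T^*,T)\geq 0$, which is the remaining hypothesis.

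With both $\Delta_{\kappa(T)}^{(1)}\geq 0$ and $\Delta_{\kappa(T)}^{(2)}\geq 0$ secured, Theorem \ref{thm_Muller} produces a dilation of $\kappa(T)$ to a $\B_2$-unitary $(U_1,U_2)$. This is a normal $b\BC$-dilation, since a $\B_2$-unitary has Taylor joint spectrum in $b\BC=\partial\BC$; therefore Arveson's theorem (Theorem \ref{thm_Arveson}) shows that $\BC$ is a complete spectral set, and in particular a spectral set, for $\kappa(T)$. This is exactly the assertion that $\kappa(T)$ is a $\B_2$-contraction, which finishes the argument.

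The main obstacle is purely computational: establishing the displayed identity means tracking the non-commuting products of $T,T^*,T^{-1},T^{-*}$ through both $M_{\kappa(T)}(S)$ and the conjugation by $T^2$. There is no conceptual difficulty once one notices that cancellations of the type $T^*T^{-*}=I$ reduce each cross term to a scalar multiple of the identity; the only real care is in pinning down the coefficients (notably the middle one, $1+r^4+4r^2$) so that the result matches $\beta(T^*,T)$ exactly.
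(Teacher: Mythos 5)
Your proposal is correct and follows essentially the same route as the paper: the paper likewise observes that $\Delta_{\kappa(T)}^{(1)}\geq 0$ is just the spherical-contraction condition from Proposition \ref{prop_C_sph}, establishes by direct computation the identity $(1+r^2)^2\,T^{*2}\Delta_{\kappa(T)}^{(2)}T^2=\beta(T^*,T)$ (which your sketched cancellations do yield, with middle coefficient $(1+r^2)^2+2r^2=1+4r^2+r^4$), and then invokes Theorem \ref{thm_Muller}. Your closing step deducing $\B_2$-contractivity from the dilation via Arveson's theorem is a harmless elaboration of what the paper takes as part of the conclusion of Theorem \ref{thm_Muller}.
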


\begin{proof}
Define 
\[
\underline{T}=(T_1, T_2)=\kappa(T)=\left(\frac{T}{\sqrt{1+r^2}}, \frac{rT^{-1}}{\sqrt{1+r^2}} \right).
\]
Then 
\begin{equation*}
	\begin{split}
\Delta_{\underline{T}}^{(2)}
=	(I-M_{\underline{T}})^2(I)
	=I+M_{\underline{T}}^2(I)-2M_{\underline{T}}(I)
&		=I+T_1^{*2}T_1^2+T_2^{*2}T_2^2+2T_1^*T_2^*T_1T_2-2T_1^*T_1-2T_2^*T_2.
	\end{split}
\end{equation*}
A routine calculation shows that
\[
(1+r^2)^2T^{*2}\Delta_{\underline{T}}^{(2)}T^2=T^{*4}T^4-2(1+r^2)T^{*3}T^3+(1+r^4+4r^2)T^{*2}T^2-2r^2(1+r^2)T^*T+r^4I,
\]
which is equal to $\beta(T^*, T)$. Since $T$ is invertible, it follows that $\Delta_{\underline{T}}^{(2)} \geq 0$ if and only if $\beta(T^*, T) \geq 0$. We have by Theorem \ref{thm_Muller} that $\kappa(T)$ is a $\B_2$-contraction that admits dilation to a $\B_2$-unitary. 	
\end{proof}

We now show that the converse to Theorem \ref{thm_Muller} and Proposition \ref{prop_Muller} may not hold. In fact, we prove that the strict inequality $\beta(T^*, T)>0$ is not valid even if $T$ is an $\A_r$-contraction. For this purpose, we recall from \cite{Agler} and \cite{Bello} the construction of another weighted bilateral shift on a vector valued Hardy space associated with an annulus. Given a Hilbert space $E$, the vector valued Hardy space $H^2(\A_r, E)$ is the space of all holomorphic maps $f: \A_r \to E$ such that
\[
\sup_{r < s <1} \frac{1}{2\pi} \int_0^{2\pi}\|f(se^{it})\|^2dt < \infty.
\]
For any positive and invertible operator $\omega$ on $E$, $H^2(\A_r, E, \omega)$  is defined as the vector space $H^2(\A_r, E)$ endowed with the Hilbert space norm
\[
\|f\|_{H^2(\A_r, E, \omega)}^2=\frac{1}{2\pi} \int_0^{2\pi}\|f(e^{it})\|^2dt+\frac{1}{2\pi} \int_0^{2\pi}\|\omega f(re^{it})\|^2dt.
\]
When $f \in H^2(\A_r, E, \omega)$, it is understood that $\|f\|=\|f\|_{H^2(\A_r, E, \omega)}$. Define the operator
\[
M_z: H^2(\A_r, E, \omega) \to H^2(\A_r, E, \omega), \quad M_z(f)(z)=zf(z)
\]
which has inverse $M_z^{-1}(f)(z)=z^{-1}f(z)$. For $f \in H^2(\A_r, E, \omega)$, it is not difficult to see that
\begin{align}\label{eqn_eg_pure}
&\quad \|M_zf\|^2
=\frac{1}{2\pi} \int_0^{2\pi}\|f(e^{it})\|^2dt+r^2\frac{1}{2\pi} \int_0^{2\pi}\|\omega f(re^{it})\|^2dt
<\|f\|^2, \notag \\
&\quad \|M_z^{-1}f\|^2
=\frac{1}{2\pi} \int_0^{2\pi}\|f(e^{it})\|^2dt+r^{-2}\frac{1}{2\pi} \int_0^{2\pi}\|\omega f(re^{it})\|^2dt
<r^{-2}\|f\|^2
\end{align}
and so, $M_z$ and $rM_z^{-1}$ are contractions. By spectral mapping theorem, $\sigma(M_z) \subseteq \CA_r$. We now show that $M_z$ is an $\A_r$-isometry. Let $f \in H^2(\A_r, E, w)$. For the ease of computations, we denote by
\[
a(f)=\frac{1}{2\pi} \int_0^{2\pi}\|f(e^{it})\|^2dt \quad \text{and} \quad b(f)=\frac{1}{2\pi} \int_0^{2\pi}\|\omega f(re^{it})\|^2dt.
\]
We have by (\ref{eqn_eg_pure}) that
\[
\|f\|^2=a(f)+b(f), \quad \|M_zf\|^2=a(f)+r^2b(f) \quad \text{and} \quad \|M_z^{-1}f\|^2=a(f)+r^{-2}b(f)
\]
and that
\[
(1+r^2)\|f\|^2-\|M_zf\|^2-\|rM_z^{-1}f\|^2=(1+r^2)(a(f)+b(f))-a(f)-r^2b(f)-r^2a(f)-b(f)=0.
\]
Consequently, it follows from Theorem \ref{thm_A_iso} that $M_z$ on $H^2(\A_r, E, \omega)$ is an $\A_r$-isometry.

\begin{eg}\label{eg_6}
 Let $a>0$. Following the discussion above, we consider the operator $T=M_z^*$ on the Hilbert space $H^2(\A_r, \C, aI)$ and let $\underline{T}=\kappa(T)$. Then, $T$ is an $\A_r$-contraction and thus is in $C_\alpha$. By Proposition \ref{prop_C_sph}, $\underline{T}$ is a spherical contraction and thus $\Delta_T^{(1)} \geq 0$.  It follows from Theorem \ref{thm_A_con_BallII} that $\underline{T}$ is a $\B_2$-contraction and it admits a dilation to a $\B_2$-unitary. Choose $(a, r)=(0.37, 0.75)$. We show that $\beta(T^*, T) \geq  0$ is not true which is same as saying that $\Delta_{\underline{T}}^{(2)} \geq 0$ is not true since $\beta(T^*, T)=(1+r^2)^2T^{*2}\Delta_{\underline{T}}^{(2)}T^2$. To do so, we first identify the space as 
\[
H^2(\A_r, \C, aI)=\left\{ f=\{f_n\}_{n \in \Z} \subset \C: \|f\|^2=\overset{\infty}{\underset{n=-\infty}{\sum}}|f_n|^2(1+a^2r^{2n}) < \infty \right\}.
\]
Clearly, $M_z\{f_n\}_{n \in \Z}=\{f_{n-1}\}_{n \in \Z}$. For $m\in \N$, we have
\[
T\{f_n\}_{n \in \Z}=\left\{\left(\frac{1+a^2r^{2(n+1)}}{1+a^2r^{2n}}\right)f_{n+1}\right\}_{n \in \Z} \ \ \text{and so,} \ \ T^m\{f_n\}_{n \in \Z}=\left\{\left(\frac{1+a^2r^{2(n+m)}}{1+a^2r^{2n}}\right)f_{n+m}\right\}_{n \in \Z}.
\]
Let $\alpha_n(m)=(1+a^2r^{2n})(1+a^2r^{2(n-m)})^{-1}$ for $m \in \mathbb{N}$ and $n \in \Z$. Consequently, we have that
\[
\|T^mf\|^2=\overset{\infty}{\underset{n=-\infty}{\sum}}|f_n|^2(1+a^2r^{2n})\left( \frac{1+a^2r^{2n}}{1+a^2r^{2(n-m)}}\right)=\overset{\infty}{\underset{n=-\infty}{\sum}}|f_{n}\alpha_{n} (m)|^2(1+a^2r^{2n}).
\]
Choose $f=\{f_n\}_{n \in \Z}$ such that $f_1=1$ and $f_n=0$ for $n \in \Z$ with $n \ne 1$. Let if possible, we have 
\begin{equation*}
\begin{split}
0 &  \leq \la \beta(T^*, T)f, f \ra\\
&=\|T^4f\|^2-2(1+r^2)\|T^3f\|^2+(1+r^4+4r^2)\|T^2f\|^2-2r^2(1+r^2)\|Tf\|^2+r^4\|f\|^2\\
&=(1+a^2r^2)\left(\alpha_1(4)-2(1+r^2)\alpha_1(3)+(1+r^4+4r^2)\alpha_1(2)-2r^2(1+r^2)\alpha_1(1)+r^4\right).
\end{split}
\end{equation*}
Then 
\begin{equation}\label{eqn_eg_6}
\alpha_1(4)-2(1+r^2)\alpha_1(3)+(1+r^4+4r^2)\alpha_1(2)-2r^2(1+r^2)\alpha_1(1)+r^4 \geq 0.
\end{equation}
A few steps of routine calculations show that \eqref{eqn_eg_6} is not true for $(a, r)=(0.37, 0.75)$. Hence, the converse to Proposition \ref{prop_Muller} does not hold. Also, it shows that the converse of Theorem \ref{thm_Muller} is not true in general.
\qed
\end{eg}

\section{Minimal spectral set and minimal von Neumann set for operators associated with an annulus}\label{sec_vNset}

%\vspace{0.2cm}

\noindent The authors of \cite{Bello} proved that the operators in $C_\alpha$ and $C_{1,r}$ do not necessarily have $\CA_r$ as a spectral set. Rather, they obtained a bound $K$ such that the operators in $C_{\alpha}$ satisfy von Neumann's inequality (\ref{eqn:new-002}) as stated below.

\begin{thm}[\cite{Bello},Theorem 1.3]\label{thm_Bello_1.3}
$\CA_r$ is a complete $\sqrt{2}$-spectral set for all operators in $C_\alpha$.
\end{thm}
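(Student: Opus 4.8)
The plan is to transfer the inequality to the induced pair $\kappa(T)$, extract the constant $\sqrt2$ from an elementary $2\times1$ operator factorization, and reduce everything to a single sharp ``column inequality''. First I would invoke Proposition~\ref{prop_C_sph} to replace the hypothesis $T\in C_\alpha$ by: $(T_1,T_2)=\kappa(T)$ is a spherical contraction, $T_1^*T_1+T_2^*T_2\le I$, subject to the variety relation $T_1T_2=\tfrac{r}{1+r^2}I$. Since Laurent polynomials are dense in $Rat(\CA_r)$ (as used in the proof of Theorem~\ref{thm_A_con_Ball}), it suffices to test a matricial Laurent polynomial $F=[f_{ij}]$. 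Splitting each entry into its analytic and principal parts $f_{ij}=f_{ij}^{+}+f_{ij}^{-}$ and writing $F_{\pm}=[f_{ij}^{\pm}]$, one has $F(T)=F_{+}(T)+F_{-}(T)$, where $F_{+}(T)$ is a matricial polynomial in $T$ (hence in $T_1$) and $F_{-}(T)$ is a matricial polynomial in $T^{-1}$ (hence in $T_2$). The goal is thus to bound $\|F_{+}(T)+F_{-}(T)\|$ by $\sqrt2\,\|F\|_{\infty,\CA_r}$.

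The constant $\sqrt2$ would come from the factorization
\[
F(T)=\begin{bmatrix} I & I\end{bmatrix}\begin{bmatrix} F_{+}(T)\\ F_{-}(T)\end{bmatrix},\qquad \Big\|\begin{bmatrix} I & I\end{bmatrix}\Big\|=\sqrt2 ,
\]
which reduces the theorem to the single column inequality
\[
(\star)\qquad F_{+}(T)^{*}F_{+}(T)+F_{-}(T)^{*}F_{-}(T)\ \le\ \|F\|_{\infty,\CA_r}^{2}\,I .
\]
It is essential that $(\star)$ be proved \emph{jointly}: estimating the two summands separately only yields the single--contraction bounds $\|F_{+}(T)\|\le\sup_{|z|=1}\|F_{+}\|$ and $\|F_{-}(T)\|\le\sup_{|z|=r}\|F_{-}\|$, whose squares can add up to strictly more than $\|F\|_{\infty,\CA_r}^{2}$ (already for $f(z)=z+rz^{-1}$ with small $r$ the separate squares give $2$, while $\|f\|_{\infty,\CA_r}^2=(1+r)^2<2$). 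What saves the day is exactly the spherical hypothesis: for this same $f$ one has $F_{+}(T)^{*}F_{+}(T)+F_{-}(T)^{*}F_{-}(T)=T^{*}T+r^{2}T^{-*}T^{-1}$, so $(\star)$ is, at the lowest degree, nothing but the defining inequality $\alpha(T^*,T)\ge0$ of $C_\alpha$ (up to the elementary slack $1+r^{2}\le(1+r)^{2}$). This is strong evidence that $C_\alpha$ is precisely the right class and that $(\star)$ is the correct target.

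To prove $(\star)$ in general I would begin with the normal case. For an $\A_r$--unitary $T$ the spectral theorem turns $(\star)$ into the scalar pointwise statement $|f_{+}(z)|^{2}+|f_{-}(z)|^{2}\le\|f\|_{\infty,\CA_r}^{2}$ for $z\in\partial\A_r$, which I would establish using that the annular supremum controls the analytic part through the circle $|z|=1$ and the co-analytic part through $|z|=r$ simultaneously, the two-circle feature of $\CA_r$ that the disc lacks. The genuine obstacle is to pass from here to an arbitrary $T\in C_\alpha$: such a $T$ need \emph{not} be a $\B_2$--contraction, equivalently need not admit an $\A_r$--unitary (spherical-unitary) dilation --- indeed the defect condition $\Delta^{(2)}\ge0$ can fail even for $\A_r$--contractions, as Example~\ref{eg_6} shows --- so the normal case cannot simply be imported through a dilation. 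I would therefore attack $(\star)$ for general $C_\alpha$ by one of two routes: (a) a direct defect/positivity computation on the Bello--Yakubovich functional model, realizing $T$ on a weighted $H^{2}(\Omega_r)$--type space and expanding $F_{+}(T)^{*}F_{+}(T)+F_{-}(T)^{*}F_{-}(T)$ against the positivity of $\alpha(T^*,T)$; or (b) a similarity route through Paulsen's theorem, showing that every $T\in C_\alpha$ is similar, with similarity constant at most $\sqrt2$, to an $\A_r$--contraction, for which $\CA_r$ is a complete spectral set by Theorem~\ref{thm_Agler}. This upgrade from the normal case to all of $C_\alpha$ is where the real work lies and where the sharpness of the constant $\sqrt2$ is encoded; the completeness of the spectral set then needs no extra argument, since the whole reduction and $(\star)$ are carried out at the matricial level.
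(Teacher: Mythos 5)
First, a point of reference: this paper does not prove Theorem~\ref{thm_Bello_1.3} at all --- it is quoted from \cite{Bello} (Theorem 1.3 there) and used as a black box --- so there is no proof in the paper to measure your argument against; the assessment below is of the proposal on its own terms. The decisive problem is that the column inequality $(\star)$ to which you reduce the theorem is \emph{false}, already for $T=I$ (which lies in $C_\alpha$, since $\alpha(I,I)=0$). Take $r=9/10$, $\rho=4/5$ and $f(z)=(z-1)/(z-\rho)\in Rat(\CA_r)$. Its Laurent decomposition on $\{|z|>\rho\}$ is $f_+\equiv 1$ and $f_-(z)=(\rho-1)/(z-\rho)$, so at the point $z=1\in\partial\A_r$ one has $|f_+(1)|^2+|f_-(1)|^2=1+1=2$, whereas checking $|f|$ on the two boundary circles gives $\|f\|_{\infty,\CA_r}=\max\{2/(1+\rho),\,(1+r)/(r+\rho)\}=19/17$, hence $\|f\|_{\infty,\CA_r}^2\approx 1.25<2$. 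Truncating the geometric series for $f_-$ yields Laurent polynomials with the same failure, so the restriction to Laurent polynomials does not save $(\star)$. The same example refutes the ``scalar pointwise statement'' you propose to establish for $\A_r$-unitaries: your implicit claim that $\sup_{|z|=1}|f|$ controls $f_+$ and $\sup_{|z|=r}|f|$ controls $f_-$ is precisely boundedness (with constant $1$) of the Riesz projection in the sup norm, which fails. Note that $f(1)=0$ in this example: the cancellation between $f_+(T)$ and $f_-(T)$ is essential, and it is exactly what the row factorization through $[\,I\ \ I\,]$ discards when applied to the literal analytic/principal split. Any viable version of this strategy must use a decomposition $f=g+h$ adapted to $f$ (at the very least redistributing the constant term), and choosing and estimating that decomposition is where the content of the theorem lies.

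Even apart from the counterexample, the proposal never proves its target for a general $T\in C_\alpha$: $(\star)$ is checked only for the single function $z+rz^{-1}$ (where it is literally the definition of $C_\alpha$), the normal case is both unproved and irrelevant (an $\A_r$-unitary has $\CA_r$ as a complete spectral set with constant $1$, and, as you yourself note, no dilation transports that case to $C_\alpha$), and the two routes offered for the general case are not executed. Route (b) is moreover circular as stated: by Paulsen's theorem (Theorem~\ref{thm_Arv_Kset}), ``$T$ is similar, with constant at most $\sqrt2$, to an operator having $\CA_r$ as a complete spectral set'' is \emph{equivalent} to the assertion being proved, so invoking it without constructing the similarity merely restates the theorem. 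Route (a) names the Bello--Yakubovich model but supplies no computation; that computation is the entire content of the cited result. What survives of the proposal is the correct observation (via Proposition~\ref{prop_C_sph}) that the hypothesis is a spherical-contraction condition on $\kappa(T)$ and that $\sqrt2$ should emerge from a $1\times 2$ row factorization; the proof itself is missing.
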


Tsikalas \cite{Tsikalas} further showed that the constant $\sqrt{2}$ is optimal for $C_{\alpha}$ in the sense that there is no positive real number $K< \sqrt 2$ such that $\CA_r$ is a complete $K$-spectral set for $C_\alpha$. Also, the following bounds were found in \cite{Crou, BBC} for the $C_{1, r}$ class.

\begin{thm}[\cite{Crou},Theorem 9 \& \cite{BBC}, Theorem 1.2]\label{thm_Crou_9}
$\CA_r$ is a $(1+\sqrt{2})$-spectral set for all operators in $C_{1,r}$ and $\CA_r$ is a complete $(2+2\slash \sqrt{3})$-spectral set for all operators in $C_{1,r}$.
\end{thm}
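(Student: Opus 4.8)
The plan is to exploit the doubly connected structure of $\CA_r$ together with the two one-sided norm bounds built into the class $C_{1,r}$. Fix $T \in C_{1,r}$, so that both $T$ and $rT^{-1}$ are contractions, and let $f \in Rat(\CA_r)$. Since $f$ has no poles on $\CA_r$, its partial-fraction (equivalently, Laurent) decomposition splits as $f = f_{+} + f_{-}$, where $f_{+}$ is holomorphic on the closed disk $\DC$ and $f_{-}$ is holomorphic on $\{|z| \ge r\} \cup \{\infty\}$ with $f_{-}(\infty)=0$; the constant term may be allocated to either summand, a freedom I would retain for the sharp estimate. First I would feed each piece into the scalar von Neumann inequality on the disk (Sz.-Nagy \cite{B-Nagy}): because $\|T\| \le 1$ one gets $\|f_{+}(T)\| \le \sup_{\T}|f_{+}|$, and writing $f_{-}(T)$ as a function of the contraction $rT^{-1}$ one gets $\|f_{-}(T)\| \le \sup_{r\T}|f_{-}|$. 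The triangle inequality then reduces everything to bounding the sup-norms of $f_{+}$ and $f_{-}$ on their respective circles by $M := \|f\|_{\infty,\CA_r}$.

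The heart of the matter, and the step I expect to be the main obstacle, is controlling the splitting $f \mapsto (f_+, f_-)$ in the supremum norm by a constant \emph{independent of} $r$. The naive bounds—estimating $f_\pm = f \mp f_\mp$ on each boundary circle and transferring between the circles by the maximum principle together with a Schwarz-lemma estimate (using $f_-(\infty)=0$)—close into a linear system, but they only yield an $r$-dependent constant of the form $(3+r)/(1-r)$, which degenerates as $r \to 1$. To reach the stated $r$-independent value $1+\sqrt2$ one must argue more carefully: optimize the placement of the constant term and replace the crude maximum-principle transfer by a sharp harmonic-measure / extremal estimate on the annulus, as carried out in \cite{Crou, BBC}. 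This is precisely where the geometry of $\CA_r$ enters nontrivially and where the constant $1+\sqrt2$ is produced.

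For the completely bounded statement I would run the identical decomposition for an $M_k$-valued rational function $[f_{ij}]$, now invoking that the disk von Neumann inequality is not merely contractive but \emph{completely} contractive (again via the Sz.-Nagy dilation, which realizes $f_\pm(T)$ as a compression of a normal functional calculus). Consequently the only quantity that must be re-examined at the matrix level is the norm of the splitting operator acting on $M_k$-valued functions, and tracking that refined estimate replaces $1+\sqrt2$ by $2+2/\sqrt3$. I would also record the alternative route through the present paper's framework: since $T \in C_{1,r}$ is equivalent to $\kappa(T)=(T_1,T_2)$ being a commuting pair with $\|T_1\|,\|T_2\| \le (1+r^2)^{-1/2}$ and $q(T_1,T_2)=0$, one may instead transport $f$ to a function on the bidisk and lean on Ando's inequality for commuting contractions; the same obstacle then resurfaces as the cost of restricting bidisk bounds to the variety $Z(q)\cap\BC$, which is exactly what inflates the constant beyond $1$.
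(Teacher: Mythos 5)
The paper offers no proof of this statement: Theorem \ref{thm_Crou_9} is imported verbatim from the cited references (\cite{Crou}, Theorem 9 and \cite{BBC}, Theorem 1.2), so there is no internal argument to compare yours against. Your outline does identify the correct mechanism, and it is the one those references use: for $T \in C_{1,r}$ the two conditions $\|T\| \leq 1$ and $\|rT^{-1}\| \leq 1$ say exactly that $\overline{\D}$ and $\{|z| \geq r\} \cup \{\infty\}$ are each spectral sets for $T$, so $\overline{\A}_r$ is an intersection of two disks of the Riemann sphere, each spectral for $T$; one then splits $f = f_+ + f_-$ by partial fractions, applies the disk von Neumann inequality to each summand, and the whole difficulty is the norm of the splitting map. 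Your computations up to that point (the estimate $\|f_-(T)\| \leq \sup_{r\T}|f_-|$ via the contraction $rT^{-1}$, and the observation that the naive transfer between the two boundary circles yields only an $r$-dependent constant that degenerates as $r \to 1$) are correct.

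The genuine gap is that the entire content of the theorem --- the specific, $r$-independent constants $1+\sqrt{2}$ and $2 + 2/\sqrt{3}$ --- is precisely the step you do not carry out. You state that ``one must argue more carefully: optimize the placement of the constant term and replace the crude maximum-principle transfer by a sharp harmonic-measure / extremal estimate \ldots as carried out in \cite{Crou, BBC},'' which is an appeal to the very results being proved. Likewise, for the completely bounded bound you assert that ``tracking that refined estimate replaces $1+\sqrt{2}$ by $2+2/\sqrt{3}$'' without deriving either number or explaining why the complete bound is the larger of the two (note the two constants come from genuinely different optimizations in \cite{Crou} and \cite{BBC}, not from one refinement of the other). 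So what you have is a correct reduction of the theorem to the extremal problem that produces the constants, together with an honest acknowledgment that you have not solved that problem; as a proof of the stated inequalities it is incomplete at exactly the point where the theorem lives. Your closing remark about routing through $\kappa(T)$ and Ando's inequality on the bidisk is a reasonable alternative framing (it is close in spirit to \cite{Tsikalas, TsikalasII}), but it faces the same unresolved quantitative step and is not known to recover these particular constants.
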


Again, Tsikalas \cite{TsikalasII} showed that if $\CA_r$ is a $K$-spectral set for all operators in $C_{1,r}$ for some $K>0$, then $K\geq 2$. Whether the bound $2$ is optimal or not for $C_{1,r}$ class is still known.

\smallskip

In this section, we ask a question in a different direction:
what is the minimal spectral set for all operators in the classes $C_\alpha$ and $C_{1,r}$? Let us recall that a compact set $X \subset \C^n$ is called a minimal spectral set for a set $\mathcal T$ consisting of commuting $n$-tuples of Hilbert space operators if $X$ is a spectral set for every member of $\mathcal T$ and for any proper compact subset $X'$ of $X$, there is $\underline T \in \mathcal T$ such that $X'$ is not a spectral set for $\underline T$. Evidently, we are supposed to find a minimal compact set $X$ that contains $\CA_r$ as a proper subset and any operator in $C_\alpha$ or $C_{1,r}$ having its spectrum in $X$ satisfies von Neumann's inequality (\ref{eqn:new-001}) when $K=1$. So, we are ready to expand the underlying compact set from $\CA_r$ to something bigger but hold the constant $K$ fixed as $K=1$ in (\ref{eqn:new-001}). Since the operators in $C_{\alpha}$ or $C_{1,r}$ are contractions, it is evident that $\DC$ is a spectral set for them. Below, we show that no smaller compact subset of $\DC$ can be a spectral set for all operators in $C_\alpha$ or $C_{1,r}$, which essentially proves that $\overline{\D}$ is actually a minimal spectral set for both the classes. We also intend to find minimal spectral sets for the following classes of operators that are in the vicinity of $C_{\alpha}$ and $C_{1,r} \ $:
\begin{gather*}
C_\alpha^*=\{T: T^* \in C_\alpha\}, \ \ C_1 =\{T : T \ \text{is an invertible contraction} \}, \\
C_\beta =\{T : T \ \text{is an invertible operator}\}.
\end{gather*}
For an $\A_r$-contraction $T$, its adjoint  $T^*$ is also an $\A_r$-contraction. We have by Theorem \ref{thm_Bello_1.1} that both $T$ and $T^*$ are in $C_\alpha$. However, this inclusion is strict as was mentioned in Remark 3.5 of \cite{Bello}. Thus, we have the following strictly increasing chain of classes.

\begin{thm}\label{thm_include}
$\mathcal A_r \ \subsetneq  \ C_\alpha \ \subsetneq C_\alpha \cup C^*_\alpha \ \subsetneq \  C_{1, r} \subsetneq C_1 \subsetneq C_\beta$, where $\mathcal A_r=\{T: T \ \text{is an $\A_r$-contraction}\}$.
\end{thm}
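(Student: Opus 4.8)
The plan is to prove the chain by treating the five set-theoretic inclusions and the five strictness statements separately, leaning on the containment results already recorded (Theorems \ref{thm_Bello_1.1} and \ref{thm_Ar_Ca}) together with the single explicit operator $M_z^t$ on $H^2(\Omega_r, \mathcal{E})$ analysed in Example \ref{eg_5}, whose adjoint behaviour is the key ingredient.

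For the inclusions I proceed as follows. The containment $\mathcal A_r \subseteq C_\alpha$ is immediate from Theorem \ref{thm_Bello_1.1}, $C_\alpha \subseteq C_\alpha \cup C_\alpha^*$ is trivial, $C_{1,r} \subseteq C_1$ holds because the condition $\|T\| \leq 1$ already makes an element of $C_{1,r}$ an invertible contraction, and $C_1 \subseteq C_\beta$ is immediate since contractions here are assumed invertible. The only inclusion requiring an observation is $C_\alpha \cup C_\alpha^* \subseteq C_{1,r}$. First I recall $C_\alpha \subseteq C_{1,r}$ from Theorem \ref{thm_Ar_Ca}. Then I note that $C_{1,r}$ is closed under adjoints: if $T \in C_{1,r}$ then $\|T^*\| = \|T\| \leq 1$ and, using $(T^*)^{-1} = (T^{-1})^*$, also $\|r(T^*)^{-1}\| = \|rT^{-1}\| \leq 1$, so $T^* \in C_{1,r}$. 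Hence $C_\alpha^* \subseteq C_{1,r}^* = C_{1,r}$, and the union is contained in $C_{1,r}$.

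For the strictness statements, the two outer ones are witnessed by scalars: any $cI$ with $0 < c < r$ lies in $C_1 \setminus C_{1,r}$, since it is a contraction but $\|rc^{-1}I\| = r/c > 1$, and $2I$ lies in $C_\beta \setminus C_1$. The first strict inclusion $\mathcal A_r \subsetneq C_\alpha$ is witnessed by $M_z^t$, which lies in $C_\alpha$ but, by Example \ref{eg_5}, is not an $\A_s$-contraction for any $s \in (0,1)$, in particular not an $\A_r$-contraction. The second, $C_\alpha \subsetneq C_\alpha \cup C_\alpha^*$, is witnessed by $(M_z^t)^*$: since $M_z^t \in C_\alpha$ we have $(M_z^t)^* \in C_\alpha^*$, while Example \ref{eg_5} records $(M_z^t)^* \notin C_\alpha$.

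I expect the one genuinely delicate step to be the middle strict inclusion $C_\alpha \cup C_\alpha^* \subsetneq C_{1,r}$, since the witnessing operator must simultaneously avoid both $C_\alpha$ and $C_\alpha^*$. My plan is to take the direct sum $T = M_z^t \oplus (M_z^t)^*$. Closure of $C_{1,r}$ under direct sums, via $\|A \oplus B\| = \max\{\|A\|, \|B\|\}$ and $(A \oplus B)^{-1} = A^{-1} \oplus B^{-1}$, together with the closure under adjoints noted above, places $T$ in $C_{1,r}$. Because $\alpha(\,\cdot^*,\cdot\,)$ is a polynomial in the operator and its adjoint, it acts block-diagonally, so $A \oplus B \in C_\alpha$ if and only if both summands lie in $C_\alpha$. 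As $(M_z^t)^* \notin C_\alpha$, neither $T = M_z^t \oplus (M_z^t)^*$ nor $T^* = (M_z^t)^* \oplus M_z^t$ lies in $C_\alpha$; hence $T \in C_{1,r}$ while $T \notin C_\alpha \cup C_\alpha^*$, which completes the chain.
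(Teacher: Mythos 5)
Your proposal is correct, and it is in fact more complete than what the paper offers: the paper states Theorem \ref{thm_include} without a formal proof, relying on Theorem \ref{thm_Bello_1.1}, a citation to Remark 3.5 of \cite{Bello}, and the elementary inclusions, while the only explicit witness it supplies for the delicate middle strictness $C_\alpha \cup C_\alpha^* \subsetneq C_{1,r}$ appears only later, as the $2\times 2$ matrix $T=S^{-1}AS$ built from Misra's curvature inequality with the numerical choice $r=0.35$, $w=0.91$, for which $\det\alpha(T^*,T)=\det\alpha(T,T^*)<0$. Your route to that same strictness is genuinely different and arguably cleaner: the direct sum $M_z^t \oplus (M_z^t)^*$ exploits the block-diagonality of $\alpha(\,\cdot^*,\cdot\,)$ and the single fact (already recorded at the end of Example \ref{eg_5}) that $(M_z^t)^* \notin C_\alpha$, so no numerical verification is needed and the argument works uniformly in $r$; the trade-off is that your witness is infinite-dimensional, whereas the paper's is a $2\times 2$ matrix. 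The remaining pieces of your argument --- adjoint-closure of $C_{1,r}$ giving $C_\alpha^* \subseteq C_{1,r}$, the scalar witnesses $cI$ with $0<c<r$ and $2I$ for the two outer strict inclusions, and $M_z^t$, $(M_z^t)^*$ for the first two --- all check out against the definitions and the cited results, so the proof is sound as written.
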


One can easily extend Theorem \ref{thm_Bello_1.3} to the operators in $C_\alpha \cup C_\alpha^*$. Before proceeding further, we state a result from the literature that will be useful.

\begin{thm}[\cite{Paulsen}, Theorem 9.11 \& Corollary 9.12]\label{thm_Arv_Kset}
A compact set $X \subset \C$ is a complete $K$-spectral set for an operator $T$ acting on a Hilbert space $\HS$ if and only if there exists an invertible operator $S$ on $\HS$ such that $\|S^{-1}\|\|S\| \leq K$ and $X$ is a complete spectral set for $S^{-1}TS$.
\end{thm}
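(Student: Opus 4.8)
The plan is to recast the theorem entirely in the language of completely bounded homomorphisms, where it becomes the operator-algebraic similarity theorem of Paulsen. Consider the unital homomorphism $\rho_T : Rat(X) \to \mathcal{B}(\HS)$ given by $\rho_T(f)=f(T)$, which is well defined once $\sigma(T) \subseteq X$. By the very definitions in (\ref{eqn:new-001}) and (\ref{eqn:new-002}), the statement that $X$ is a complete $K$-spectral set for $T$ is precisely that $\rho_T$ is completely bounded with $\|\rho_T\|_{cb} \leq K$, while the statement that $X$ is a complete spectral set for an operator $A$ is precisely that $\rho_A$ is completely contractive. Since $S^{-1}\rho_T(f)S = f(S^{-1}TS) = \rho_{S^{-1}TS}(f)$, the whole theorem is equivalent to the assertion that $\rho_T$ is completely bounded with $\|\rho_T\|_{cb}\leq K$ if and only if there is an invertible $S$ with $\|S\|\|S^{-1}\| \leq K$ making $\rho_{S^{-1}TS}$ completely contractive.

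The easy direction is a direct similarity computation. Suppose such an $S$ exists and set $A=S^{-1}TS$. Then $\sigma(T)=\sigma(A)\subseteq X$, and for any matricial rational function $[f_{ij}]_{i,j=1}^m$ with entries in $Rat(X)$ the functional-calculus identity $f_{ij}(T)=S f_{ij}(A) S^{-1}$ yields $[f_{ij}(T)] = \mathbf{S}\,[f_{ij}(A)]\,\mathbf{S}^{-1}$, where $\mathbf{S}=\mathrm{diag}(S,\dots,S)$ acts on $\HS^{\oplus m}$. Since $\|\mathbf{S}\|=\|S\|$ and $\|\mathbf{S}^{-1}\|=\|S^{-1}\|$, and since $X$ is a complete spectral set for $A$,
\[
\|[f_{ij}(T)]\| \leq \|S\|\,\|[f_{ij}(A)]\|\,\|S^{-1}\| \leq \|S\|\|S^{-1}\|\,\|[f_{ij}]\|_{\infty,X} \leq K\,\|[f_{ij}]\|_{\infty,X},
\]
which is exactly that $X$ is a complete $K$-spectral set for $T$.

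The substantial content is the converse. First I would view $Rat(X)$ as a unital subalgebra of the $C^*$-algebra $C(bX)$, so that $\rho_T$ is a completely bounded unital homomorphism of an operator algebra. The engine is the representation theorem for completely bounded maps (Wittstock--Arveson): there exist a Hilbert space $\mathcal{K}$, a unital $*$-representation $\pi$ of $C(bX)$ on $\mathcal{K}$, and bounded operators $V,W:\HS\to\mathcal{K}$ with $\|V\|=\|W\|=\|\rho_T\|_{cb}^{1/2}$ such that $\rho_T(f)=V^*\pi(f)W$ for all $f\in Rat(X)$. Unitality gives $V^*W=I$, whence $E:=WV^*$ satisfies $E^2=W(V^*W)V^*=WV^*=E$, so $E$ is an idempotent on $\mathcal{K}$ with $\|E\|\leq\|\rho_T\|_{cb}$. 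The multiplicativity and unitality of $\rho_T$ are then used, together with the cyclic subspace $\overline{\pi(Rat(X))W\HS}$, to conjugate $E$ to an orthogonal projection and thereby manufacture an invertible $S$ on $\HS$ for which $\rho_{S^{-1}TS}$ is realized as a compression of $\pi$ to an invariant subspace; being the restriction of a $*$-representation, this compression is completely contractive, so $X$ is a complete spectral set for $S^{-1}TS$. Tracking the norms through the idempotent $E$ delivers the sharp bound $\|S\|\|S^{-1}\| \leq \|\rho_T\|_{cb} \leq K$.

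The main obstacle is exactly this converse, and within it the two genuinely nontrivial inputs: the Wittstock--Arveson representation of $\rho_T$ as $V^*\pi(\cdot)W$ with matching norms, and the passage from the idempotent $E=WV^*$ to an orthogonal projection while keeping the condition number of the conjugating operator controlled by $\|\rho_T\|_{cb}$. Everything else---the reformulation in terms of $\rho_T$, the identity $f(S^{-1}TS)=S^{-1}f(T)S$, and the block-diagonal norm estimate---is routine.
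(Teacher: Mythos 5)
The paper does not prove this statement: it is imported verbatim from Paulsen's book (Theorem 9.11 and Corollary 9.12), so there is no in-paper argument to compare against. Your proposal is, in outline, exactly the standard proof from that source: the reformulation via the homomorphism $\rho_T$, the block-diagonal similarity computation for the easy direction, and the Wittstock--Arveson representation $\rho_T(f)=V^*\pi(f)W$ with $\|V\|=\|W\|=\|\rho_T\|_{cb}^{1/2}$ for the converse are all correct. The one imprecise point is the final step: the similarity is not literally produced by conjugating the idempotent $E=WV^*$ to an orthogonal projection, but by restricting $V^*$ to the semi-invariant subspace $\mathcal{M}\ominus\mathcal{N}$, where $\mathcal{M}=\overline{\pi(Rat(X))W\HS}$ and $\mathcal{N}=\ker(V^*)\cap\mathcal{M}$ are both $\pi(Rat(X))$-invariant because multiplicativity forces the intertwining $V^*\pi(f)m=\rho_T(f)V^*m$ for all $m\in\mathcal{M}$; this restriction is a bounded bijection onto $\HS$ whose inverse is $h\mapsto P_{\mathcal{M}\ominus\mathcal{N}}Wh$, so its condition number is at most $\|V\|\,\|W\|=\|\rho_T\|_{cb}\le K$, and its polar part yields the invertible $S$ on $\HS$ conjugating $\rho_T$ to the completely contractive compression of $\pi$. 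With that step spelled out, your plan is complete and agrees with the cited proof.
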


\begin{prop}\label{prop_CaCa*}
Let $T \in C_\alpha \cup C_\alpha^*$. Then $\CA_r$ is a complete $\sqrt{2}$-spectral set for $T$.
\end{prop}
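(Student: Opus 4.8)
The plan is to reduce the statement for $C_\alpha \cup C_\alpha^*$ to the already-established result for $C_\alpha$ (Theorem \ref{thm_Bello_1.3}) by exploiting the symmetry of the distinguished boundary $\partial \A_r$ under the adjoint operation. Since $T \in C_\alpha \cup C_\alpha^*$, either $T \in C_\alpha$ or $T^* \in C_\alpha$. In the first case we are done immediately by Theorem \ref{thm_Bello_1.3}, so the entire task is to handle the case $T \in C_\alpha^*$, i.e.\ $T^* \in C_\alpha$.

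First I would record the key observation that the annulus $\CA_r$ is invariant under complex conjugation, $z \mapsto \overline z$, since $|\overline z| = |z|$. This is the structural fact that lets the adjoint pass cleanly through the complete $K$-spectral set inequality. Concretely, for any matricial rational function $f = [f_{ij}]$ with entries in $Rat(\CA_r)$, define $\tilde f = [\tilde f_{ij}]$ where $\tilde f_{ij}(z) = \overline{f_{ij}(\overline z)}$; each $\tilde f_{ij}$ is again rational with no poles in $\CA_r$ (because conjugating the annulus maps it onto itself), so $\tilde f \in M_m(Rat(\CA_r))$. The two supremum norms agree, $\|\tilde f\|_{\infty, \CA_r} = \|f\|_{\infty, \CA_r}$, again by conjugation-invariance of $\CA_r$.

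Next I would connect the operator values. The identity $f_{ij}(T^*) = (\tilde f_{ij}(T))^*$ is the workhorse: evaluating the conjugated-coefficient rational function at $T$ and taking the adjoint recovers $f_{ij}$ evaluated at $T^*$. At the matricial level this gives that the block operator matrix $[f_{ij}(T^*)]$ is the adjoint, up to the canonical flip of the block indexing, of $[\tilde f_{ij}(T)]$, so the two have equal operator norm. Since $T^* \in C_\alpha$, Theorem \ref{thm_Bello_1.3} yields $\|[\tilde f_{ij}(T^*{}^*)]\| = \|[\tilde f_{ij}(T)]\| \le \sqrt 2\, \|\tilde f\|_{\infty, \CA_r}$. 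Chaining the norm equalities from the previous paragraph then gives $\|[f_{ij}(T^*)]\| \le \sqrt 2 \, \|f\|_{\infty, \CA_r}$, which is precisely the assertion that $\CA_r$ is a complete $\sqrt 2$-spectral set for $T$. The spectral containment $\sigma(T) \subseteq \CA_r$ follows since $\sigma(T) = \overline{\sigma(T^*)} \subseteq \overline{\CA_r} = \CA_r$.

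The one step that requires genuine care, rather than routine bookkeeping, is verifying the adjoint-transpose identity at the matricial level. The subtlety is that taking the adjoint of a block matrix $[A_{ij}]$ produces $[A_{ji}^*]$ with transposed indices, so one must track the index flip and confirm it is absorbed harmlessly into the supremum (the norm of a matrix-valued function on $\CA_r$ is unchanged under transposing entries and conjugating, by taking adjoints pointwise). I expect this to be the main obstacle only in the sense of demanding precise notation; conceptually it is just the statement that the map $X \mapsto X^*$ is a $*$-anti-isomorphism that sends the functional-calculus representation for $T$ to that for $T^*$. An alternative, and perhaps cleaner, route avoiding the index gymnastics is to invoke Theorem \ref{thm_Arv_Kset}: since $T^* \in C_\alpha$ has $\CA_r$ as a complete $\sqrt 2$-spectral set, there is an invertible $S$ with $\|S\|\|S^{-1}\| \le \sqrt 2$ for which $\CA_r$ is a complete spectral set for $S^{-1} T^* S$; one then checks that $(S^{-1} T^* S)^* = S^* T (S^*)^{-1}$ has $\CA_r$ as a complete spectral set (using conjugation-invariance of $\CA_r$ to pass the spectral-set property through the adjoint of a normal dilation), and applies Theorem \ref{thm_Arv_Kset} in reverse with the similarity $S^*$ to conclude $\CA_r$ is a complete $\sqrt 2$-spectral set for $T$.
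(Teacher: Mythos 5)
Your second, ``alternative'' route is precisely the paper's proof: the paper applies Theorem \ref{thm_Arv_Kset} to $T^*\in C_\alpha$ to produce an invertible $S$ with $\|S\|\|S^{-1}\|\le\sqrt2$ such that $\CA_r$ is a complete spectral set for $S^{-1}T^*S$, observes that $\CA_r$ is then a complete spectral set for $(S^{-1}T^*S)^*=S^*TS^{-*}$, and applies Theorem \ref{thm_Arv_Kset} in reverse. The paper states the middle step without justification; your parenthetical remark --- that the adjoint of an operator having $\CA_r$ as a complete spectral set again has $\CA_r$ as a complete spectral set, via conjugation-invariance of $\CA_r$ and adjoints of normal dilations --- is exactly the justification it tacitly relies on. Your first, direct route is a genuinely different and more elementary argument: it bypasses Paulsen's similarity characterization entirely and works with $\tilde f_{ij}(z)=\overline{f_{ij}(\overline z)}$ and the matricial adjoint, and the bookkeeping you flag (the index flip $[A_{ij}]^*=[A_{ji}^*]$ being absorbed by the transpose- and conjugation-invariance of the supremum norm) is the right thing to worry about and is handled correctly in principle.

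However, as written the direct route has the roles of $T$ and $T^*$ transposed at the final step: you conclude $\|[f_{ij}(T^*)]\|\le\sqrt2\,\|f\|_{\infty,\CA_r}$ and call this the complete $\sqrt2$-spectral inequality for $T$, but it is the inequality for $T^*$, which is just Theorem \ref{thm_Bello_1.3} applied to $T^*\in C_\alpha$ and needs none of the conjugation machinery. What must be bounded is $\|[f_{ij}(T)]\|$. The chain should run the workhorse identity in the other direction: since $\tilde{\tilde f}=f$, one has $f_{ij}(T)=\bigl(\tilde f_{ij}(T^*)\bigr)^*$, hence $[f_{ij}(T)]=\bigl([\tilde f_{ji}(T^*)]\bigr)^*$, and Theorem \ref{thm_Bello_1.3} applied to $T^*\in C_\alpha$ with the transposed matricial function $[\tilde f_{ji}]$ gives $\|[f_{ij}(T)]\|=\|[\tilde f_{ji}(T^*)]\|\le\sqrt2\,\|[\tilde f_{ji}]\|_{\infty,\CA_r}=\sqrt2\,\|f\|_{\infty,\CA_r}$. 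With that transposition corrected, your direct argument is complete and stands as a self-contained alternative to the paper's similarity-based proof.
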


\begin{proof}
	If $T \in C_\alpha$, then the conclusion follows from Theorem \ref{thm_Bello_1.3}. Let $T^* \in C_\alpha$. It follows from Theorem \ref{thm_Bello_1.3} that $\CA_r$ is a complete $\sqrt{2}$-spectral set for $T^*$. Hence, we have by Theorem \ref{thm_Arv_Kset} that there is an invertible operator $S$ such that $\|S^{-1}\| \|S\| \leq \sqrt{2}$ and $\CA_r$ is a complete spectral set for $S^{-1}T^*S$. Consequently, $\CA_r$ is a complete spectral set for $(S^{-1}T^*S)^*=S^*TS^{-*}$ and that $\|S^*\| \|S^{-*}\| \leq \sqrt{2}$. Again, by Theorem \ref{thm_Arv_Kset}, we have that $\CA_r$ is a complete $\sqrt{2}$-spectral set for $T$.
\end{proof}

We now show that the converse to Theorems \ref{thm_Bello_1.3} or \ref{thm_Crou_9} does not hold.

\begin{eg}
We refer to Example \ref{eg_5}. Consider the operator $T=M_z^t$ on $H^2(\Omega_r, \mathcal{E})$, where $\mathcal{E}$ is a Hilbert space. We have by Example \ref{eg_5} that $T \in C_\alpha$ and so, $T^* \in C_\alpha \cup C_\alpha^*$. By Proposition \ref{prop_CaCa*}, $\CA_r$ is a complete $\sqrt{2}$-spectral set for $T^*$. It follows from Example \ref{eg_5} that $T^* \notin C_\alpha$ and so, the converse of Theorem \ref{thm_Bello_1.3} does not hold. \qed
\end{eg}

\begin{eg}
For $0<r<w<1$, let 
\[
S=\begin{pmatrix}
1\slash \sqrt{2} & 0\\
0 & 1\\
\end{pmatrix} \quad  \text{and} \quad A=\begin{pmatrix}
w & (1-w^2)(w^2-r^2)\\
0 & w\\
\end{pmatrix}.
\]
We show that $A$ is an $\mathbb{A}_r$-contraction. To do so, we recall from \cite{Misra} the Hardy space $H^2(\partial \mathbb{A}_r)$ consisting of holomorphic functions from $L^2(\partial \A_r)$, where the inner product is given by
\begin{equation*}
\langle f,g\rangle=\frac{1}{2\pi}\overset{2\pi}{\underset{0}{\int}}f(e^{it})\overline{g(e^{it})}dt + \frac{1}{2\pi}\overset{2\pi}{\underset{0}{\int}}f(re^{it})\overline{g(re^{it})}dt\quad (f, g \in L^2(\partial \mathbb{A}_r)).
\end{equation*}
The space $H^2(\partial \mathbb{A}_r)$ is a reproducing kernel Hilbert space with kernel function given by $\displaystyle
\widehat{K}_r(\lambda, \mu)=\overset{\infty}{\underset{n=-\infty}{\sum}}\frac{(\lambda\overline{\mu})^n}{1+r^{2n}}$ for $\lambda, \mu \in \mathbb{A}_r$. Since $w \in (r, 1)$, we have
\begin{equation*}
\begin{split}
\widehat{K}_r(w,w)
=\overset{\infty}{\underset{n=0}{\sum}}\frac{w^{2n}}{1+r^{2n}} + \overset{\infty}{\underset{n=1}{\sum}}\frac{r^{2n}}{w^{2n}(1+r^{2n})}
\leq \overset{\infty}{\underset{n=0}{\sum}}w^{2n} + \overset{\infty}{\underset{n=1}{\sum}}(r\slash w)^{2n}
\leq \frac{1}{(1-w^2)(w^2-r^2)}.
\end{split}
\end{equation*}
It follows from Corollary $1.1'$ in \cite{Misra} that $A$ is an $\mathbb{A}_r$-contraction on $\mathbb{C}^2$. Evidently, $\|S\|\|S^{-1}\|=\sqrt{2}$. We have by Theorem \ref{thm_Arv_Kset} that $\CA_r$ is a complete $K$-spectral set for $T=S^{-1}AS$ for any $K \geq \sqrt{2}$.  For the sake of brevity, we let $a=(1-w^2)(w^2-r^2)$ and so, $T=\begin{pmatrix}
w & \sqrt{2} a \\
0 & w
\end{pmatrix}$. A tedious but simple computation gives
\[
\alpha(T^*, T)=\begin{pmatrix}
(1-w^2)(w^2-r^2) \ & \ -2\sqrt{2}aw^3+\sqrt{2}(1+r^2)aw \\
-2\sqrt{2}aw^3+\sqrt{2}(1+r^2)aw \ & \ -w^4-8a^2w^2+(1+r^2)(w^2+2a^2)-r^2
\end{pmatrix}
\] 
and
\[
\alpha(T, T^*)=\begin{pmatrix}
 -w^4-8a^2w^2+(1+r^2)(w^2+2a^2)-r^2 \ & \ -2\sqrt{2}aw^3+\sqrt{2}(1+r^2)aw \\
-2\sqrt{2}aw^3+\sqrt{2}(1+r^2)aw \ & \ (1-w^2)(w^2-r^2)
\end{pmatrix}.
\]
Moreover, we have that
\[
I-T^*T=\begin{pmatrix}
1-w^2 & -\sqrt{2}aw \\
-\sqrt{2}aw & 1-2a^2-w^2
\end{pmatrix} \quad \& \quad T^{-1}=w^{-2}VTV, \quad \text{where} \quad V=\begin{pmatrix}
1 & 0 \\
0 & -1
\end{pmatrix}.
\]
Since $1-w^2>0$, it follows that $I-T^*T \geq 0$ if and only if $\det(I-T^*T)=1-2a^2-2w^2+w^4 \geq 0$. Moreover, $\|rT^{-1}\|=rw^{-2}\|T\|$, because $V$ is a unitary. We now disprove converse of Theorem \ref{thm_Crou_9} and Proposition  \ref{prop_CaCa*} for suitable values of $r, w \in (0, 1)$ with $r<w$.

\begin{enumerate} 

\vspace{0.2cm}

\item For $r=0.35$ and $w=0.91$, we have that $a=0.12129264$ and $\det(I-T^*T)>0$. Hence, $I-T^*T \geq 0$ and so, $T$ is a contraction. Also, $rw^{-2}=3500\slash 8281<1$ and thus, $\|rT^{-1}\|=rw^{-2}\|T\| <\|T\| \leq 1$. Thus $T \in C_{1,r}$. It is not difficult to verify that $\det(\alpha(T^*, T))=\det(\alpha(T, T^*))<0$. Hence, $T \notin C_\alpha \cup C^*_\alpha$. Thus, converse to Proposition \ref{prop_CaCa*} does not hold.

\vspace{0.2cm}

\item Choose $r=0.52$ and $w=0.99$. Again, simple computations give $a=0.01412303$ and $\det(I-T^*T)<0$. Hence, $T \notin C_{1, r}$. Thus, the converse to Theorem \ref{thm_Crou_9} does not hold. \qed
\end{enumerate} 
\end{eg} 
We now search for minimal spectral sets for the following classes: $C_\alpha, C_\alpha \cup C^*_\alpha, C_{1,r}$, $C_1$. 

\smallskip

In general, a class $\mathcal{C}$ consisting of commuting $n$-tuple of operators need not have a minimal spectral set. For example, consider the family $\mathcal{C}_0=\{k I: k \in \mathbb{N}\}$, where $I$ is the identity operator on a Hilbert space. Evidently, no compact subset of $\C$ can be a spectral set for $\mathcal C_0$. Also, $C_{\beta}$ as mentioned above is a norm-unbounded family of operators and cannot have a minimal spectral set in $\C$. On the other hand, a norm-bounded family like the set of all contractions must have $\overline{\D}$ as a minimal spectral set and it follows from von Neumann's famous theorem \cite{v-N} and the fact that the spectrum of a pure isometry (i.e., the unilateral shift $M_z$) is equal to $\overline{\D}$. An analogous result holds for $\A_r$-contractions. Proposition 3.15 in \cite{PalII} tells us that the spectrum of a pure $\A_r$-isometry is $\CA_r$. Thus, $\CA_r$ is a minimal spectral set for all $\A_r$-contractions. However, we shall show that $\CA_r$ is not a minimal spectral set for any of the classes: $C_\alpha, C_\alpha \cup C_\alpha^*,C_{1,r}$. For proving this, we recall that an operator $T \in \mathcal{B}(\mathcal{H})$ is said to be \textit{completely non-normal} if there is no nonzero closed subspace $\mathcal{L}$ of $\HS$ that reduces $T$ and $T|_{\mathcal{L}}$ is normal.

\begin{thm} \label{thm:new-005}
$\DC$ is a minimal spectral set for $C_{1,r}$.
\end{thm}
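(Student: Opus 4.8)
The plan is to establish the two requirements in the definition of a minimal spectral set: that $\overline{\mathbb{D}}$ is a spectral set for $C_{1,r}$, and that no proper compact subset is. The first is immediate: every $T \in C_{1,r}$ has $\|T\| \le 1$, so $T$ is a contraction and von Neumann's inequality \cite{v-N} shows that $\overline{\mathbb{D}}$ is a spectral set for $T$.

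For minimality I would first localize spectra. If $T \in C_{1,r}$ then $\sigma(T) \subseteq \overline{\mathbb{D}}$ (as $\|T\|\le 1$), and each $\mu \in \sigma(T)$ satisfies $\mu^{-1} \in \sigma(T^{-1})$ with $|\mu^{-1}| \le \|T^{-1}\| \le r^{-1}$, so $|\mu|\ge r$; hence $\sigma(T)\subseteq \overline{\mathbb{A}}_r$. The scalars $\lambda I$ with $\lambda \in \overline{\mathbb{A}}_r$ all lie in $C_{1,r}$ and have spectrum $\{\lambda\}$. Thus for any proper compact $X' \subsetneq \overline{\mathbb{D}}$ that omits a point $\lambda_0$ with $|\lambda_0|\ge r$, the operator $\lambda_0 I \in C_{1,r}$ already fails the inclusion $\sigma(\lambda_0 I)\subseteq X'$, so $X'$ is not a spectral set for it. This settles every $X'$ except those with $\overline{\mathbb{A}}_r \subseteq X' \subsetneq \overline{\mathbb{D}}$.

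The remaining, and main, case is that $X'$ omits an interior point $\lambda_0$ with $|\lambda_0| < r$, where scalars are useless since $\sigma(T)$ never reaches $\lambda_0$. Setting $\delta = \operatorname{dist}(\lambda_0, X')>0$, compactness gives $X' \subseteq \overline{\mathbb{D}}\setminus B(\lambda_0,\delta)$, so by monotonicity of the supremum norm it suffices to produce, for each $\lambda_0 \in r\mathbb{D}$, a single $T \in C_{1,r}$ and a rational $f$ whose only pole in $\overline{\mathbb{D}}$ is at $\lambda_0$ with $\|f(T)\| > \|f\|_{\infty,\,\overline{\mathbb{D}}\setminus B(\lambda_0,\delta)}$; every $X'$ with that hole then inherits the violation (either $\sigma(T)\not\subseteq X'$, or the same $f$ works since $\|f\|_{\infty,X'} \le \|f\|_{\infty,\,\overline{\mathbb{D}}\setminus B(\lambda_0,\delta)}$). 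Such a $T$ must be a completely non-normal member of $C_{1,r}$ that is \emph{not} an $\mathbb{A}_r$-contraction---for an $\mathbb{A}_r$-contraction, $\overline{\mathbb{A}}_r \subseteq X'$ would already be a spectral set. These exist precisely because $\overline{\mathbb{A}}_r$ is only a $K$-spectral set with $K>1$ (Theorems \ref{thm_Bello_1.3} and \ref{thm_Crou_9}); concretely I would work with the weighted bilateral shift $M_z^t$ of Example \ref{eg_5} and its rescalings.

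The hard part is genuinely this interior case, and the obstruction is that the obvious resolvent choice cannot work: applying $T^{-1}$ to $Tx - \lambda_0 x$ and using $\|T^{-1}\|\le r^{-1}$ gives the uniform bound $\|(T-\lambda_0)^{-1}\| \le (r-|\lambda_0|)^{-1}$ for every $T \in C_{1,r}$, while $\overline{\mathbb{A}}_r \subseteq X'$ forces $\delta \le r-|\lambda_0|$ and hence $\|(z-\lambda_0)^{-1}\|_{\infty,X'}\ge (r-|\lambda_0|)^{-1}$; so $f(z)=(z-\lambda_0)^{-1}$, and indeed all of its powers, respect the inequality on $X'$. The violating $f$ must therefore couple the pole at $\lambda_0$ with an auxiliary factor that is large on $\overline{\mathbb{A}}_r$ yet keeps the product small on $\overline{\mathbb{D}}\setminus B(\lambda_0,\delta)$, and the operator must be non-normal enough to saturate the inequality. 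Equivalently, the crux is to show that each punctured set $\overline{\mathbb{D}}\setminus B(\lambda_0,\delta)$ fails to be a spectral set for $C_{1,r}$; I would accomplish this by localizing the known failure of $\overline{\mathbb{A}}_r$ at the point $\lambda_0$ through a Blaschke-type rational function adapted to the annulus, acting on a family of completely non-normal shifts whose non-normality is allowed to grow as $\delta \downarrow 0$.
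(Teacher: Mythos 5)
Your reduction is sound as far as it goes: the spectral-set half is immediate, the localization $\sigma(T)\subseteq\overline{\A}_r$ for $T\in C_{1,r}$ is correct, scalars dispose of any $X'$ missing a point of $\overline{\A}_r$, and your observation that powers of the resolvent $(z-\lambda_0)^{-1}$ can never violate von Neumann's inequality on $\overline{\D}\setminus B(\lambda_0,\delta)$ (because $\|(T-\lambda_0)^{-1}\|\leq (r-|\lambda_0|)^{-1}$ uniformly over $C_{1,r}$) is a genuine and correctly diagnosed obstruction. But the proof stops exactly where it needs to start. For the remaining case $\overline{\A}_r\subseteq X'\subsetneq\overline{\D}$ you never actually exhibit an operator $T\in C_{1,r}$ and a rational function $f$ with $\|f(T)\|>\|f\|_{\infty,\overline{\D}\setminus B(\lambda_0,\delta)}$; the final paragraph only announces that you ``would'' build $f$ from a Blaschke-type function adapted to the annulus and let the non-normality of a family of shifts grow as $\delta\downarrow 0$. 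No candidate $f$ is written down, no norm is estimated, and it is not even argued that such a family exists inside $C_{1,r}$. Since this interior case is, by your own account, the entire content of the theorem, the proposal is a plan rather than a proof.

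The paper closes this gap by an entirely different and much shorter device: it takes the single $2\times 2$ matrix $T=\bigl(\begin{smallmatrix}\sqrt{r} & 1-r\\ 0 & \sqrt{r}\end{smallmatrix}\bigr)$ (due to Misra), checks that $T\in C_{1,r}$, $\|T\|=1$ and $T$ is completely non-normal, and then invokes Williams' theorem (Corollary 1 of Section 5 in \cite{Williams}) that a completely non-normal operator of norm one admits \emph{no} proper closed subset of $\overline{\D}$ as a spectral set. That one citation handles all proper compact subsets simultaneously, including the interior-puncture case you isolate, with no need to construct a violating rational function by hand. If you want to salvage your own route, note that the paper's Theorem \ref{prop_min_Ca} carries out precisely the kind of M\"obius-transform violation argument you sketch, for $M_z^t\in C_\alpha$; since $C_\alpha\subseteq C_{1,r}$, minimality of $\overline{\D}$ for $C_\alpha$ already implies minimality for $C_{1,r}$. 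Either ingredient would complete your argument, but as written neither is supplied.
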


\begin{proof}
Evidently for an operator $S \in C_{1,r}$, we have that $S$ and $rS^{-1}$ are contractions. Since every contraction has $\overline{\D}$ as a spectral set, it follows that $\overline{\D}$ is a spectral set for $C_{1,r}$. We now show that for any proper compact subset $Y$ of $\overline{\D}$, there is a member $T$ in $C_{1,r}$ such that $Y$ is not a spectral set for $T$. For this purpose we refer to an example due to G. Misra \cite{Misra}. For $0<r<1$, define 
	\[
	T=\begin{pmatrix}
		\sqrt{r} &  1-r\\
		0 & \sqrt{r} 
	\end{pmatrix} \quad \text{so that} \quad 
	T^*T=\begin{pmatrix}
		r & \sqrt{r}(1-r)\\
		\sqrt{r}(1-r) & r+(1-r)^2\\ 
	\end{pmatrix}. 
	\]
	It is easy to see that $\sigma(T)=\{\sqrt{r}\} \subseteq \A_r$ and $\|T\|=\|T^*T\|^{1\slash 2}=1$. Moreover, $T \in C_{1, r}$. Since $\|T\|=1$ and $T$ is completely non-normal, it follows from Corollary 1 of Section 5 in \cite{Williams} that no proper closed subset of $\overline{\mathbb{D}}$ is a spectral set for $T$. The proof is now complete.
\end{proof}

\begin{thm}\label{prop_min_Ca}
$\DC$ is a minimal spectral set for $C_\alpha$.
\end{thm}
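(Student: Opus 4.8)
The plan is to argue along the lines of Theorem \ref{thm:new-005}, but with a witness operator that genuinely lies in $C_\alpha$. First I would record that $\DC$ is a spectral set for $C_\alpha$: every $T\in C_\alpha$ is an invertible contraction, so by von Neumann's inequality $\DC$ is a spectral set for it. The whole content is therefore minimality, i.e.\ producing, for each proper compact $X'\subsetneq \DC$, a member of $C_\alpha$ for which $X'$ fails to be a spectral set. As in Theorem \ref{thm:new-005}, it suffices to exhibit a single $T\in C_\alpha$ that is completely non-normal with $\|T\|=1$, for then Corollary 1 of Section 5 in \cite{Williams} guarantees that no proper closed subset of $\DC$ is a spectral set for $T$, and every proper compact $X'\subsetneq \DC$ is in particular such a proper closed subset. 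The new difficulty compared with Theorem \ref{thm:new-005} is that the Misra matrix used there is \emph{not} in $C_\alpha$: a direct computation of $\alpha(T^*,T)$ for $T=\left(\begin{smallmatrix} s & b\\ 0 & s\end{smallmatrix}\right)$ with $\|T\|=1$ (which forces $b=1-s^2$) gives $\det\alpha(T^*,T)=-r^2(1-s^2)^4\le 0$, vanishing only when $b=0$; thus no completely non-normal $2\times 2$ contraction of norm one belongs to $C_\alpha$, and the witness must be infinite dimensional.

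For the witness I would take $T=M_z^t$ on $H^2(\Omega_r,\mathcal E)$ from Example \ref{eg_5}, which lies in $C_\alpha$ by \cite{Bello}; here one must use $M_z^t$ itself and not its adjoint, since $(M_z^t)^*\notin C_\alpha$ by Example \ref{eg_5}. A short computation with the weighted norm gives $\|M_z^t f\|^2=\|f\|^2-\sum_{n\ge 0}r^{2n}\|f_n\|^2$, so $M_z^t$ is a contraction with $\|M_z^t\|=1$, the value being attained on vectors supported in the indices $n\le -1$. To establish complete non-normality I would pass to the orthonormal basis $\{g_n\}$ obtained by normalising the $e_n$, under which $M_z^t$ becomes the bilateral weighted shift $g_n\mapsto a_n g_{n+1}$ with $|a_n|=1$ for $n\le -1$ and $|a_n|=r$ for $n\ge 0$. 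Then the self-commutator is diagonal, $T^*T-TT^*=(r^2-1)\,\langle\,\cdot\,,g_0\rangle g_0$, a rank-one operator supported at the single index where the weight modulus jumps from $1$ to $r$.

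The complete non-normality then follows from this rank-one self-commutator: if $\mathcal L$ is a nonzero reducing subspace with $T|_{\mathcal L}$ normal, then $(T^*T-TT^*)|_{\mathcal L}=0$, so $\mathcal L\perp g_0$; since $\mathcal L$ is invariant under both $T$ and $T^*$ and all weights are nonzero, applying $T$ and $T^*$ repeatedly shows $\mathcal L\perp g_n$ for every $n$, whence $\mathcal L=\{0\}$ because $g_0$ is $*$-cyclic. Thus $M_z^t$ is a completely non-normal contraction of norm one in $C_\alpha$, and Corollary 1 of Section 5 in \cite{Williams} completes the proof exactly as in Theorem \ref{thm:new-005}. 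I expect the main obstacle to be the verification of complete non-normality; the reduction to a bilateral weighted shift together with the rank-one self-commutator computation are the crucial steps that make it transparent, whereas the norm computation and the appeal to \cite{Williams} are routine.
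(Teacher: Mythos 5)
Your preparatory work is correct and matches the first half of the paper's own argument: the paper also takes $M_z^t$ on $H^2(\Omega_r,\C)$ as the witness in $C_\alpha$, and your norm computation, the passage to the orthonormal basis, and the rank-one self-commutator argument for complete non-normality are all valid (the paper records the same basis and weights, though it never needs complete non-normality). The gap is the final step. Corollary 1 of Section 5 of \cite{Williams} is a statement about \emph{compact} operators, and in Theorem \ref{thm:new-005} it is applied only to a $2\times 2$ matrix; your witness $M_z^t$ is invertible on an infinite-dimensional space, hence not compact, so the citation does not cover it. Worse, the unqualified principle you invoke --- that a completely non-normal operator of norm one admits no proper closed subset of $\DC$ as a spectral set --- is false for non-compact operators. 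Take $M_z$ on $H^2(\A_s,\C,I)$ as in Section \ref{sec_spher}: it is an $\A_s$-isometry, so $\CA_s\subsetneq\DC$ is a spectral set for it; its norm is $1$ because $\|M_zz^n\|^2/\|z^n\|^2=(1+s^{2n+2})/(1+s^{2n})\to 1$; and in the orthonormal basis obtained by normalising $\{z^n\}_{n\in\Z}$ it is a bilateral weighted shift with strictly increasing weight moduli $\beta_n=\bigl((1+s^{2n+2})/(1+s^{2n})\bigr)^{1/2}$, so its self-commutator is the injective diagonal operator $\mathrm{diag}(\beta_n^2-\beta_{n-1}^2)$ and it is completely non-normal by exactly your argument. (The one feature distinguishing it from $M_z^t$ is that its norm is not attained, so at best you would need a norm-attainment version of Williams' corollary, which you have neither stated nor proved.)

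Consequently the minimality of $\DC$ for $M_z^t$ must be established by hand, using more than complete non-normality and the norm. The paper does this as follows: it computes $\sigma(M_z^t)=\CA_r$ via Gelfand's formula, so a proper compact spectral set $X\subsetneq\DC$ must contain $\CA_r$ and omit a ball $B(\lambda_0,\epsilon)$ with $|\lambda_0|<r$; composing with the disk automorphism $\phi(z)=(z-\lambda_0)/(1-\overline{\lambda}_0z)$ sends $X$ into an annulus $\CA_s$, so $S=\phi(M_z^t)^*$ would be an $\A_s$-contraction and hence, by Theorem \ref{thm_Bello_1.1}, would satisfy $(1+s^2)\|f\|^2-\|Sf\|^2-s^2\|S^{-1}f\|^2\geq 0$; an explicit computation with the basis vector $w_0$ shows this quantity equals $s^2(1-|\lambda_0|^2)(r^2-1)/(r^2-|\lambda_0|^2)<0$, a contradiction. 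Some argument of this kind is needed to close your proof.
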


\begin{proof}
By Theorem \ref{thm_include}, every operator in $C_\alpha$ is a contraction and so, $\DC$ is a spectral set for operators in $C_\alpha$. It suffices to show that there is an operator in $C_\alpha$ for which no proper closed subset of $\DC$ is a spectral set. We have shown in Example \ref{eg_5} that $M_z^t$ on $H^2(\Omega_r, \C)$ cannot be an $\A_s$-contraction for $0<s<1$ and $M_z^t \in C_\alpha$. Now we prove that no proper closed subset of $\overline{\D}$ is a spectral set for $M_z^t$.  For brevity, we denote by $e_n$ the vector in $H^2(\Omega_r, \C)$ with $1$ at $n$-th position and zero everywhere else.  Note that
\[
	\langle e_n, e_m \rangle 	= \left\{
				\begin{array}{ll}
					r^{2n}(1-r^{2})^{-1}, & m=n, n \geq 0\\
					(1-r^{2})^{-1}, & m=n,  n<0\\
					0, &  n \ne m\\
				\end{array} 
				\right. \quad \text{and} \quad \la f, e_n\ra = \left\{
				\begin{array}{ll}
					f_nr^{2n}(1-r^{2})^{-1}, & n \geq 0\\
					 f_n(1-r^{2})^{-1}, &  n <0 \\
				\end{array} 
				\right.
	\]
for all $f=\{f_n\}_{n \in \Z}$. Thus, $\{e_n: n \in \Z\}$ forms an orthogonal basis of $H^2(\Omega_r, \C)$. Let 
\[
w_n=\left\{
				\begin{array}{ll}
					e_nr^{-n}\sqrt{1-r^{2}}, & n \geq 0\\
					 e_n\sqrt{1-r^{2}}, &  n <0 \\
				\end{array} 
				\right. \quad \text{and} \quad \alpha_n=\left\{
				\begin{array}{ll}
					r, & n \geq 0\\
					 -1, &  n =-1 \\
					  1, &  n <-1. \\
				\end{array} 
				\right.
\]
The set $\{w_n : n \in \Z\}$ forms an orthonormal basis of $H^2(\Omega_r, \C)$ and $M_z^tw_n=\alpha_nw_{n+1}$. The adjoint is given by $M_z^{t*}w_n=\alpha_{n-1}w_{n-1}$ for all $n \in \Z$. Since $M_z^t \in C_\alpha$, we have by Theorem \ref{thm_include} that $M_z^t$ and $r(M_z^t)^{-1}$ are contractions. A routine calculation shows that $(M_z^t)^nw_{-n}=\alpha_{-n} \dotsc \alpha_{-1} w_0$ and $r^n(M_z^t)^{-n}w_{n+1}=w_1$ for all $n \in \mathbb{N}$. Therefore, $\|(M_z^t)^n\|=\|r^n(M_z^t)^{-n}\|=1$ for $n \in \mathbb{N}$. By Gelfand's formula, the spectral radii of $M_z^t$ and $r(M_z^t)^{-1}$ are given by 
\[
\rho(M_z^t)=\lim_{n \to\infty} \|(M_z^t)^n\|^{1\slash n}=1 \quad \text{and} \quad \rho((M_z^t)^{-1})=\lim_{n \to\infty} \|(M_z^t)^{-n}\|^{1\slash n}=1\slash r
\]
respectively. We have by Theorem 1.3.9 in \cite{Bourhim} that 
\[
\sigma(M_z^t)=\left\{z \in \C: \frac{1}{\rho((M_z^t)^{-1})} \leq |z| \leq \rho(M_z^t)\right\}=\CA_r.
\]
Thus, no compact subset of $\DC$ smaller than $\CA_r$ can be a spectral set for $M_z^t$. Let if possible, $X \subsetneq \DC$ be a spectral set for $M_z^t$. Then $\CA_r \subset X \subsetneq \DC$ and there exists $\lambda_0 \in \DC \setminus X$. Since $X$ is closed, there is an $\epsilon > 0$ such that $X \subseteq  \DC \setminus B(\lambda_0, \epsilon)$, where $B(\lambda_0, \epsilon)=\{z \in \C : |z-\lambda_0|<\epsilon\}$. Let $X_0=\DC \setminus B(\lambda_0, \epsilon)$. Therefore, $|\lambda_0|<r$ and the annular set $X_0$ is also a spectral set for $M_z^t$. One can choose $\epsilon>0$ sufficiently small such that $\epsilon<r-|\lambda_0|$. The map
\[
\phi: \DC\to \DC, \quad \phi(z)=\frac{z-\lambda_0}{1-\overline{\lambda}_0z}  \quad (z \in \DC)
\]
is an automorphism of $\DC$. Let $z \in X_0$. Then 
\[
|z-\lambda_0| \geq \epsilon, \quad  |1-\overline{\lambda}_0z| \leq 1+|\lambda_0||z| \leq 1+|\lambda_0| \quad \text{and so, } \quad |\phi(z)|=\frac{|z-\lambda_0|}{|1-\overline{\lambda}_0z|} \geq  \frac{\epsilon}{1+|\lambda_0|}.
\] 
Thus $\epsilon(1+|\lambda_0|)^{-1} \leq |\phi(z)| \leq 1$ and so, $\phi(X_0) \subseteq \CA_s$ for any $s<\epsilon(1+|\lambda_0|)^{-1}$. Putting everything together, we have that $\phi(M_z^t)=(M_z^t-\lambda_0)(I-\overline{\lambda}_0M_z^t)^{-1}$ is an $\A_s$-contraction. Thus, $S=\phi(M_z^t)^*$ is an $\A_s$-contraction. We have by Theorem \ref{thm_Bello_1.1} that 
\begin{equation}\label{eqn_min_Ca}
(1+s^2)-S^*S-s^2S^{-*}S^{-1} \geq 0 \quad \text{and so,} \quad (1+s^2)\|f\|^2-\|Sf\|^2-s^2\|S^{-1}f\|^2 \geq 0  
\end{equation}
for all $f \in H^2(\Omega_r, \C)$. It is not difficult to see that $(M_z^{t*})^nw_0=-w_{-n}$ and $(M_z^{t*})^{-n}w_0=r^{-n}w_{n}$ for all $n \in \mathbb{N}$. Again, by routine computations we have that

\begin{align*}
	Sw_0
	=\left[\overset{\infty}{\underset{n=0}{\sum}} \lambda_0^n(M_z^{t*})^{n+1}-\overline{\lambda}_0\overset{\infty}{\underset{n=0}{\sum}} \lambda_0^n(M_z^{t*})^{n}\right]w_0
	=-(1-|\lambda_0|^2)\overset{\infty}{\underset{n=1}{\sum}}\lambda_0^{n-1}w_{-n}-\overline{\lambda}_0w_0
\end{align*}
and
\begin{align*}		
	S^{-1}w_0
	=(I-\lambda_0M_z^{t*})(M_z^{t*})^{-1}\left(I-\overline{\lambda}_0(M_z^{t*})^{-1}\right)^{-1}w_0
	&=\left[\overset{\infty}{\underset{n=0}{\sum}} \overline{\lambda}_0^n(M_z^{t*})^{-n-1}-\lambda_0\overset{\infty}{\underset{n=0}{\sum}}\overline{\lambda}_0^n(M_z^{t*})^{-n}\right]w_0\\
	&=(1-|\lambda_0|^2)\overset{\infty}{\underset{n=1}{\sum}} \overline{\lambda}_0^{n-1}r^{-n}w_{n}-\lambda_0w_0.
\end{align*}
Consequently, we have that 
\[
\|Sw_0\|^2=|\lambda_0|^2+(1-|\lambda_0|^2)^2 \overset{\infty}{\underset{n=1}{\sum}}|\lambda_0|^{2(n-1)}=|\lambda_0|^2+(1-|\lambda_0|^2)^2\frac{1}{1-|\lambda_0|^2}=1
\]
and
\[
\|S^{-1}w_0\|^2=|\lambda_0|^2+(1-|\lambda_0|^2)^2 \overset{\infty}{\underset{n=1}{\sum}}\frac{|\lambda_0|^{2(n-1)}}{r^{2n}}=|\lambda_0|^2+\frac{(1-|\lambda_0|^2)^2}{r^2-|\lambda_0|^2}.
\]
Hence, it follows that
\begin{equation*}
\begin{split}
(1+s^2)\|w_0\|^2-\|Sw_0\|^2-s^2\|S^{-1}w_0\|^2
&=(1+s^2)-1-s^2\left(|\lambda_0|^2+\frac{(1-|\lambda_0|^2)^2}{r^2-|\lambda_0|^2}\right)\\
&=\frac{s^2(1-|\lambda_0|^2)}{r^2-|\lambda_0|^2}(r^2-1)<0,
\end{split}
\end{equation*}
which contradicts (\ref{eqn_min_Ca}). Hence, $X$ cannot be a spectral set for $M_z^t$. The proof is now complete.
\end{proof}

A consequence of Theorem \ref{thm_include} and Theorem \ref{prop_min_Ca} is the following result. 

\begin{thm}\label{cor_min_3}
$\DC$ is a minimal spectral set for $C_\alpha^*, \,C_\alpha \cup C_\alpha^*$ and $C_{1}$.
\end{thm}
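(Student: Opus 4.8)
The plan is to reduce all three statements to the already-established minimality of $\DC$ for $C_\alpha$ (Theorem \ref{prop_min_Ca}), exploiting the inclusion chain of Theorem \ref{thm_include} together with, for $C_\alpha^*$, the behaviour of spectral sets under taking adjoints. First I would dispose of the easy half, that $\DC$ is a spectral set for each of the three classes. By Theorem \ref{thm_include} we have $C_\alpha \subseteq C_{1,r} \subseteq C_1$, so every operator in $C_\alpha \cup C_\alpha^*$ and in $C_1$ is an invertible contraction; and if $T \in C_\alpha^*$ then $T^* \in C_\alpha$ is a contraction, whence $\|T\|=\|T^*\|\leq 1$. Thus every operator arising in any of the three classes is a contraction, and von Neumann's inequality \cite{v-N} gives $\DC$ as a spectral set for each of them.

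Next I would handle minimality for $C_\alpha \cup C_\alpha^*$ and $C_1$ by a direct inheritance argument. The observation is purely formal: if $\mathcal C_1 \subseteq \mathcal C_2$, if $\DC$ is a spectral set for all of $\mathcal C_2$, and if $\DC$ is minimal for $\mathcal C_1$, then $\DC$ is minimal for $\mathcal C_2$, since a witness $T \in \mathcal C_1$ showing that a proper compact $X' \subsetneq \DC$ fails to be a spectral set automatically lies in $\mathcal C_2$. Because $C_\alpha \subseteq C_\alpha \cup C_\alpha^*$ and $C_\alpha \subseteq C_1$ by Theorem \ref{thm_include}, and $\DC$ is minimal for $C_\alpha$ by Theorem \ref{prop_min_Ca}, this at once yields minimality of $\DC$ for $C_\alpha \cup C_\alpha^*$ and for $C_1$.

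The remaining case $C_\alpha^*$ is the only genuine obstacle, since $C_\alpha$ is not contained in $C_\alpha^*$ and the inheritance argument no longer applies. Here I would use the symmetry that, for a compact $X \subseteq \C$ and any operator $S$, the set $X$ is a spectral set for $S$ if and only if $X^*:=\{\bar z : z \in X\}$ is a spectral set for $S^*$; this follows by testing the von Neumann inequality against rational functions $f$ (with poles off $X$) and their conjugates $\tilde f(z)=\overline{f(\bar z)}$, using $f(S)^*=\tilde f(S^*)$ and $\|f\|_{\infty,X}=\|\tilde f\|_{\infty,X^*}$. Given a proper compact $X' \subsetneq \DC$, its conjugate $(X')^*$ is again a proper compact subset of the conjugation-invariant set $\DC$; by Theorem \ref{prop_min_Ca} there is $S \in C_\alpha$ for which $(X')^*$ is not a spectral set, and then $T=S^*$ lies in $C_\alpha^*$ and, by the symmetry, $X'$ is not a spectral set for $T$. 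This establishes minimality of $\DC$ for $C_\alpha^*$ and completes the proof.

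I expect the adjoint-symmetry verification to be the only nontrivial ingredient; everything else is bookkeeping with the inclusion chain of Theorem \ref{thm_include} and an appeal to Theorem \ref{prop_min_Ca}. It is worth noting that the single operator $M_z^t$ constructed in the proof of Theorem \ref{prop_min_Ca} already serves as a universal witness for $C_\alpha$ (and hence for $C_\alpha \cup C_\alpha^*$ and $C_1$), while $(M_z^t)^*$ serves as a universal witness for $C_\alpha^*$, so in each case a single operator suffices to defeat every proper compact subset of $\DC$.
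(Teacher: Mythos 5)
Your proposal is correct, and for two of the three classes it coincides with the paper's argument: the spectral-set half follows from contractivity, and minimality for $C_\alpha \cup C_\alpha^*$ and $C_1$ is inherited from Theorem \ref{prop_min_Ca} via the inclusions $C_\alpha \subseteq C_\alpha \cup C_\alpha^* \subseteq C_1$ of Theorem \ref{thm_include}, exactly as in the paper. The difference is in the $C_\alpha^*$ case. The paper handles it concretely: it takes the operator $(M_z^t)^*$ on $H^2(\Omega_r,\C)$, notes it lies in $C_\alpha^*$, and then repeats the entire computation of Theorem \ref{prop_min_Ca} with $(M_z^t)^*$ in place of $M_z^t$ --- forming $P=\phi((M_z^t)^*)$ for a M\"obius map $\phi$ killing a putative hole $\lambda_0$, computing $\|Pw_0\|^2$ and $\|P^{-1}w_0\|^2$ explicitly on the orthonormal basis $\{w_n\}$, and contradicting the $C_\alpha$ inequality from Theorem \ref{thm_Bello_1.1}. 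You instead invoke the adjoint--conjugation duality: $X$ is a spectral set for $S$ if and only if $X^*=\{\bar z: z\in X\}$ is a spectral set for $S^*$, via $\tilde f(z)=\overline{f(\bar z)}$, $\tilde f(S)^*=f(S^*)$ and $\|\tilde f\|_{\infty,X}=\|f\|_{\infty,X^*}$; since $\DC$ is conjugation-invariant, a proper compact $X'\subsetneq\DC$ has $(X')^*\subsetneq\DC$ proper and compact, so the $C_\alpha$-witness $S$ for $(X')^*$ yields the $C_\alpha^*$-witness $S^*$ for $X'$. This duality is standard and your verification of it is sound, so your route is shorter and avoids redoing the shift computation; what the paper's version buys is an explicit, self-contained demonstration on $(M_z^t)^*$ (the same universal witness your argument ultimately produces). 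Either way the theorem is proved.
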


\begin{proof}
Since $C_\alpha^*, C_\alpha \cup C_\alpha^*, C_1$ consist of contractions, it follows that $\DC$ is a spectral set for each one of these classes. We have by Theorem \ref{thm_include} that $C_\alpha \subseteq C_\alpha \cup C_\alpha^* \subseteq  C_1$. By Theorem \ref{prop_min_Ca}, $\DC$ is a minimal spectral set for $C_\alpha$ and so, $\DC$ is a minimal spectral set for $C_\alpha \cup C_\alpha^*$ and $C_{1}$. We now show that the operator $(M_z^t)^*$ on $H^2(\Omega_r, \C)$, as in Example \ref{eg_5}, belongs to $C_\alpha^*$ with $\DC$ as a minimal spectral set. Again by Example \ref{eg_5}, the operator $M_z^t$ on $H^2(\Omega_r, \C)$ belongs to $C_\alpha$ and so, $(M_z^t)^* \in C_\alpha^*$. Following the proof of Theorem \ref{prop_min_Ca}, we have that $\sigma(M_z^t)=\CA_r$ and so, $\sigma((M_z^t)^*)=\CA_r$.

\smallskip

We apply similar computations and techniques as in Theorem \ref{prop_min_Ca} to show that $\DC$ is a minimal spectral set for $(M_z^t)^*$. To do so, we follow the same notations  as in Theorem \ref{prop_min_Ca}. Let if possible, $X \subsetneq \DC$ be a spectral set for $(M_z^t)^*$. Then $\CA_r \subset X \subsetneq \DC$ and choose $\lambda_0 \in \DC \setminus X$. As $X$ is closed, one can choose $\epsilon > 0$ such that $X \subseteq  \DC \setminus B(\lambda_0, \epsilon)$. Let $X_0=\DC \setminus B(\lambda_0, \epsilon)$. Therefore, $|\lambda_0|<r$ and $X_0$ is also a spectral set for $(M_z^t)^*$. Choose $\epsilon>0$ small enough so that $\epsilon<r-|\lambda_0|$. As shown in the proof of Theorem \ref{prop_min_Ca}, the automorphism of $\DC$ given by
\[
\phi: \DC\to \DC, \quad \phi(z)=\frac{z-\lambda_0}{1-\overline{\lambda}_0z}  \quad (z \in \DC)
\]
satisfies $\epsilon(1+|\lambda_0|)^{-1} \leq |\phi(z)| \leq 1$ and $\phi(X_0) \subseteq \CA_s$ for $0<s<\epsilon(1+|\lambda_0|)^{-1}$. Let $P=\phi((M_z^t)^*)$. Since $(M_z^t)^*$ has $X_0$ as a spectral set, $P$ has $\phi(X_0)$ as a spectral set. Consequently, $P=((M_z^t)^*-\lambda_0)(I-\overline{\lambda}_0(M_z^t)^*)^{-1}$ is an $\A_s$-contraction. We have by Theorem \ref{thm_Bello_1.1} that
\begin{equation}\label{eqn_min_CaII}
(1+s^2)\|f\|^2-\|Pf\|^2-s^2\|P^{-1}f\|^2 \geq 0  
\end{equation}
for all $f \in H^2(\Omega_r, \C)$. Similar calculations as in the proof of Theorem \ref{prop_min_Ca} give that  
\begin{align*}
Pw_0=-\lambda_0w_0-(1-|\lambda_0|^2)\overset{\infty}{\underset{n=1}{\sum}}\overline{\lambda_0}^{n-1}w_{-n} \quad \& \quad P^{-1}w_0=-\overline{\lambda}_0w_0+(1-|\lambda_0|^2)\overset{\infty}{\underset{n=1}{\sum}}\lambda_0^{n-1}r^{-n}w_{n},
\end{align*}
where $\{w_n : n \in \Z\}$ forms an orthonormal basis of $H^2(\Omega_r, \C)$. Consequently, $\|Pw_0\|^2=1$ and $\|P^{-1}w_0\|^2=|\lambda_0|^2+(1-|\lambda_0|^2)^2(r^2-|\lambda_0|^2)^{-1}$. Then
\begin{equation*}
\begin{split}
(1+s^2)\|w_0\|^2-\|Pw_0\|^2-s^2\|P^{-1}w_0\|^2=\frac{s^2(1-|\lambda_0|^2)}{r^2-|\lambda_0|^2}(r^2-1)<0,
\end{split}
\end{equation*}
which contradicts (\ref{eqn_min_CaII}). Therefore, $X$ cannot be a spectral set for $(M_z^t)^*$. This finishes the proof.
\end{proof}

We now turn our attention to find minimal spectral sets for the induced pairs $\kappa(T)$, when $T$ belongs to the set of all $\A_r$-contractions.

\begin{thm}
$Z(q) \cap \BC$ is a minimal spectral set for $\mathcal{F}=\{\kappa(T) : T \ \text{is an $\A_r$-contraction}\}$, where $\displaystyle q(z_1, z_2)=z_1z_2-\frac{r}{1+r^2}.$ 
\end{thm}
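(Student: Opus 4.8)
The plan is to treat the spectral-set assertion and the minimality assertion separately. The former is immediate: if $T$ is any $\A_r$-contraction, then Theorem \ref{thm_A_con_Ball} says precisely that $Z(q) \cap \BC$ is a spectral set for $\kappa(T)$, and since this holds for every element of $\mathcal{F}$, the set $Z(q) \cap \BC$ is a spectral set for the whole class. For minimality I must show that no proper compact subset of $Z(q) \cap \BC$ survives as a spectral set for all of $\mathcal{F}$; in fact I will exhibit a single member of $\mathcal{F}$ that defeats every proper compact subset at once, by arranging its Taylor joint spectrum to fill $Z(q) \cap \BC$ entirely.

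To this end I would select an $\A_r$-contraction of maximal spectrum. Let $V$ be a pure $\A_r$-isometry on a Hilbert space $\HS$; then $V$ is an $\A_r$-contraction, so $\kappa(V) \in \mathcal{F}$, and by Proposition 3.15 of \cite{PalII} its spectrum is the full closed annulus, $\sigma(V) = \CA_r$. Applying the holomorphic spectral mapping theorem for the Taylor joint spectrum to the map $\kappa$ --- exactly as in the proof of Theorem \ref{thm_A_con_Ball} --- together with part $(i)$ of Lemma \ref{lem_pi}, I obtain
\[
\sigma_T(\kappa(V)) = \kappa(\sigma(V)) = \kappa(\CA_r) = Z(q) \cap \BC.
\]
Thus the Taylor joint spectrum of the single pair $\kappa(V)$ already exhausts the whole set $Z(q) \cap \BC$.

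Finally, by the very definition of a spectral set, every compact set that serves as a spectral set for $\kappa(V)$ must contain $\sigma_T(\kappa(V)) = Z(q) \cap \BC$. Consequently no proper compact subset $X'$ of $Z(q) \cap \BC$ can meet the spectrum-containment requirement, so such an $X'$ fails to be a spectral set for $\kappa(V) \in \mathcal{F}$. This yields the minimality and completes the proof. The argument is, in essence, the $\kappa$-image of the already-noted minimality of $\CA_r$ for the class of $\A_r$-contractions, and the only delicate point is the use of the spectral mapping theorem for the Taylor joint spectrum under $\kappa$; since this holomorphic spectral mapping property has been invoked repeatedly in this paper, it presents no genuine obstacle.
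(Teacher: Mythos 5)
Your proof is correct and follows essentially the same route as the paper: the spectral-set half via Theorem \ref{thm_A_con_Ball}, and minimality by taking a pure $\A_r$-isometry $V$ with $\sigma(V)=\CA_r$ (Proposition 3.15 of \cite{PalII}) and pushing the spectrum forward under $\kappa$ via the spectral mapping theorem and Lemma \ref{lem_pi} to conclude $\sigma_T(\kappa(V))=Z(q)\cap\BC$, which every spectral set must contain. The only cosmetic difference is that the paper cites Theorem \ref{thm_A_con_BallII} for the first half, which is equivalent.
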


\begin{proof}
Let $T$ be an $\A_r$-contraction. Then $\CA_r$ is a spectral set for $T$ and by Theorem \ref{thm_A_con_BallII}, the variety $\kappa(\CA_r)=Z(q) \cap \BC$ is a spectral set for every operator in $\mathcal{F}$. To see the minimality, let $C$ be a spectral set for all operators in $\mathcal{F}$. Let $V$ be a pure $\A_r$-isometry. It follows from Proposition 3.15 in \cite{PalII} that $\sigma(V)=\CA_r$. By spectral mapping theorem and Lemma \ref{lem_pi}, $\sigma_T(\kappa(V))=\kappa(\sigma(V))=\kappa(\CA_r)=Z(q) \cap \BC$ and so, $Z(q) \cap \BC \subseteq C$. Hence, $Z(q) \cap \BC$ is a minimal spectral set for $\mathcal{F}$.
\end{proof}

With the same spirit, we want to find minimal spectral sets for the classes of operator pairs induced by $C_{\alpha}, C_{\alpha}\cup C_{\alpha}^*, C_{1,r}, C_1$ and $C_\beta$ under the map $\kappa$. For the first three classes, we have by Theorem \ref{prop_min_Ca} and Theorem \ref{cor_min_3} that $\DC$ is a minimal spectral set for $C_\alpha$, $C_\alpha \cup C_\alpha^*$ and $C_{1, r}$. Since $\kappa$ is well-defined on $\mathbb{C} \setminus \{0\}$ and is unbounded on $\overline{\D}\setminus \{0\}$, it turns out that the notion of minimal spectral sets does not work for these classes of induced pairs. So, we consider the more general notion of von Neumann set and minimal von Neumann set. Recall from Definition \ref{defn:new-001} that a set $Y \subseteq \C^n$ is said to be a von Neumann set for a commuting tuple of operators $\underline{T}=(T_1, \dotsc, T_n)$ if $\sigma_T(\underline{T}) \subseteq Y$ and $\|f(\underline{T})\| \leq \|f\|_{\infty, Y}$ for every $f \in Rat(Y)$. Also, for a class of commuting $n$-tuples of operators $\mathcal{C}$, a (closed) set $Y \subset \C^n$ is called a minimal $($closed$)$ von Neumann set for $\mathcal{C}$ if $Y$ is a von Neumann set for every operator tuple in $\mathcal{C}$ and for any proper (closed) subset $Y'$ of $Y$, there is $\underline{S}$ in $\mathcal{C}$ such that $Y'$ is not a von Neumann set for $\underline S$. We say that $Y$ is a minimal von Neumann set for $\underline{S}$ if $Y$ is a minimal von Neumann set for the set $\mathcal{C}=\{\underline{S}\}$.
The notion of von Neumann set is useful as the following result shows. 

\begin{lem}\label{lem_invc_vN}
$T$ is an invertible contraction if and only if $\DC \setminus \{0\}$ is a von Neumann set for $T$. 
\end{lem}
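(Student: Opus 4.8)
The plan is to verify the two implications separately, the forward direction carrying all the substance.

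For the easier converse, I would assume $\DC \setminus \{0\}$ is a von Neumann set for $T$. By Definition \ref{defn:new-001} this forces $\sigma(T) \subseteq \DC \setminus \{0\}$; in particular $0 \notin \sigma(T)$, so $T$ is invertible. To see that $T$ is a contraction I would simply test von Neumann's inequality on the polynomial $f(z) = z$, which lies in $Rat(\DC \setminus \{0\})$ as it has no poles at all. This yields $\|T\| = \|f(T)\| \le \|f\|_{\infty,\, \DC \setminus \{0\}} = \sup\{|z| : z \in \DC \setminus \{0\}\} = 1$, so $T$ is an invertible contraction.

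For the forward direction, suppose $T$ is an invertible contraction. Since $\|T\| \le 1$ gives $\sigma(T) \subseteq \DC$ and invertibility gives $0 \notin \sigma(T)$, we obtain $\sigma(T) \subseteq \DC \setminus \{0\}$, which is the spectral requirement in the definition of a von Neumann set. It remains to prove $\|f(T)\| \le \|f\|_{\infty,\, \DC \setminus \{0\}}$ for every $f = p/q \in Rat(\DC \setminus \{0\})$. The crucial observation is that, since $q$ has no zeros on $\DC \setminus \{0\}$, the only possible pole of $f$ inside $\DC$ is at the origin. I would then split into two cases. If $f$ has a genuine pole at $0$, then $\sup\{|f(z)| : z \in \DC \setminus \{0\}\} = +\infty$ and the inequality holds trivially. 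Otherwise $f$ is bounded near $0$, so the singularity at the origin is removable and $f$ extends to a function holomorphic on a neighbourhood of $\DC$, since all its remaining poles lie in $\{|z| > 1\}$.

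In this second case I would argue that the operator $f(T) = p(T)q(T)^{-1}$ coincides with the value at $T$ of the holomorphic (Riesz--Dunford) functional calculus applied to the holomorphic extension of $f$, using that this calculus is multiplicative and depends only on the germ of $f$ near $\sigma(T) \subseteq \DC \setminus \{0\}$. Since that extension is a uniform limit on $\DC$ of its Taylor polynomials, the classical von Neumann inequality \cite{v-N} for the contraction $T$ gives $\|f(T)\| \le \|f\|_{\infty,\, \DC}$; finally, continuity of $f$ at $0$ yields $\|f\|_{\infty,\, \DC} = \|f\|_{\infty,\, \DC \setminus \{0\}}$, completing the estimate. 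The main obstacle, and the only genuinely delicate point, is precisely this identification of the rationally defined operator $f(T) = p(T)q(T)^{-1}$ with the functional-calculus value of the holomorphic extension in the removable-singularity case; once that is settled, the reduction to von Neumann's inequality on $\DC$ is immediate.
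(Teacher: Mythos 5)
Your proof is correct and follows essentially the same route as the paper: split on whether $f \in Rat(\DC \setminus \{0\})$ has a genuine pole at the origin, note the inequality is vacuous when $\|f\|_{\infty,\, \DC\setminus\{0\}}=\infty$, and otherwise reduce to von Neumann's inequality on $\DC$. The only difference is that you treat the removable-singularity subcase (where $q(0)=0$ but the pole cancels) explicitly via the holomorphic functional calculus, a point the paper absorbs into the statement ``if $f$ has no pole at $z=0$, then $f \in Rat(\DC)$''; your extra care there is harmless and arguably tidier.
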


\begin{proof}
Let $T$ be an invertible contraction. In this case, $\sigma(T) \subseteq \DC \setminus \{0\}$. Let $f \in Rat(\DC \setminus \{0\})$. If $f$ has no pole at $z=0$, then $f \in Rat(\DC)$. By maximum modulus principle, we have that
\[
\|f(T)\| \leq \|f\|_{\infty, \DC}=\|f\|_{\infty, \T} \leq \|f\|_{\infty, \DC \setminus \{0\}}.
\]
Let $f$ have a pole at $z=0$ of order $m$. We can write $f(z)=z^{-m}g(z)$ for some $g \in Rat(\DC)$ with $g(0) \neq 0$. Consequently, $\|f(T) \| \leq \|f\|_{\infty, \DC \setminus \{0\}}=\infty$ and thus, $\DC \setminus \{0\}$ is a von Neumann set for $T$. The converse follows directly from the fact that $\DC$ is a spectral set for any contraction $T$. 
\end{proof}

Let $\mathcal{C}$ be any one of the classes mentioned in Theorem \ref{thm_include}. The next theorem gives a connection between von Neumann sets for operators in $\mathcal{C}$ and $\kappa(\mathcal{C})$, where $\kappa(\mathcal C)=\{ \kappa(T): \ T \in \mathcal C \}$.

\begin{thm}\label{thm_vN_iff}
Let $T$ be an invertible operator and let $Y \subseteq \C \setminus \{0\}$. Then $Y$ is a von Neumann set for  $T$  if and only if $\kappa(Y)$ is a von Neumann set for $\kappa(T)$. 
\end{thm}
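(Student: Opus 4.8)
The plan is to build a dictionary between the one-variable rational functional calculus at $T$ and the two-variable rational functional calculus at $\kappa(T)=(T_1,T_2)$, exploiting that $\kappa$ is injective and maps into the variety $Z(q)$, so that the first coordinate $z_1$ determines the second via $z_1z_2=r/(1+r^2)$. Throughout I write $(T_1,T_2)=\kappa(T)$ and use that $T_1=T/\sqrt{1+r^2}$ and $T_2=rT^{-1}/\sqrt{1+r^2}$ commute. First I would dispose of the spectral condition: since $\kappa$ is a holomorphic (hence continuous) map by Lemma \ref{lem_pi} that is injective on $\C\setminus\{0\}$ with inverse $(z_1,z_2)\mapsto \sqrt{1+r^2}\,z_1$, the spectral mapping theorem gives $\sigma_T(\kappa(T))=\kappa(\sigma(T))$, and injectivity yields $\sigma(T)\subseteq Y \iff \kappa(\sigma(T))\subseteq \kappa(Y)$, both spectra lying in $\C\setminus\{0\}$ as $T$ is invertible. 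Thus the containment clause of ``von Neumann set'' transfers in both directions, and it remains to match the two von Neumann inequalities.

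For the forward direction, suppose $Y$ is a von Neumann set for $T$ and take $g=P/Q\in Rat(\kappa(Y))$ with $P,Q\in\C[z_1,z_2]$ and $Q$ nonvanishing on $\kappa(Y)$. The pullback $g\circ\kappa(z)=P(\kappa(z))/Q(\kappa(z))$ is a ratio of Laurent polynomials in $z$, since each monomial $z_1^az_2^b$ becomes a constant times $z^{a-b}$; clearing a factor $z^{-k}$ and using $0\notin Y$ together with $Q\neq 0$ on $\kappa(Y)$ shows $g\circ\kappa$ has no pole on $Y$, so $g\circ\kappa\in Rat(Y)$. Because $\kappa\colon Y\to\kappa(Y)$ is surjective, $\|g\circ\kappa\|_{\infty,Y}=\|g\|_{\infty,\kappa(Y)}$, and the composition identity $(g\circ\kappa)(T)=g(\kappa(T))$ then gives $\|g(\kappa(T))\|=\|(g\circ\kappa)(T)\|\leq \|g\circ\kappa\|_{\infty,Y}=\|g\|_{\infty,\kappa(Y)}$. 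Hence $\kappa(Y)$ is a von Neumann set for $\kappa(T)$.

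For the converse, suppose $\kappa(Y)$ is a von Neumann set for $\kappa(T)$ and take $f=p/q\in Rat(Y)$ with $p,q\in\C[z]$. I would set $g(z_1,z_2):=p(\sqrt{1+r^2}\,z_1)/q(\sqrt{1+r^2}\,z_1)$, a two-variable rational function independent of $z_2$; since $q(\sqrt{1+r^2}\,z_1)$ evaluated at $\kappa(z)$ equals $q(z)\neq 0$ for $z\in Y$, the denominator of $g$ has no zero on $\kappa(Y)$, so $g\in Rat(\kappa(Y))$. Direct substitution gives $g\circ\kappa=f$, and $g(\kappa(T))=p(T)q(T)^{-1}=f(T)$ because $\sqrt{1+r^2}\,T_1=T$. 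Therefore $\|f(T)\|=\|g(\kappa(T))\|\leq \|g\|_{\infty,\kappa(Y)}=\|g\circ\kappa\|_{\infty,Y}=\|f\|_{\infty,Y}$, so $Y$ is a von Neumann set for $T$.

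The step I expect to be the main obstacle is the careful justification of the functional-calculus identity $(g\circ\kappa)(T)=g(\kappa(T))$ in full rational (not merely polynomial) generality: one must verify that composing the one-variable rational calculus with the holomorphic map $\kappa$ coincides with the two-variable rational calculus evaluated at the commuting pair $(T_1,T_2)$, and in particular that the operators $q(T)$ and $Q(\kappa(T))$ are invertible exactly under the spectral containment already established. Once this homomorphism property is in place, the remaining ingredients—surjectivity of $\kappa\colon Y\to\kappa(Y)$ for the two norm identities, the pole count placing each pullback in the correct rational algebra, and the explicit realization $f=g\circ\kappa$—are routine verifications.
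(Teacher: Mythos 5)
Your proposal is correct and follows essentially the same route as the paper: the forward direction pulls back $g\in Rat(\kappa(Y))$ to $g\circ\kappa\in Rat(Y)$ and uses surjectivity of $\kappa$ onto $\kappa(Y)$, while the converse lifts $f\in Rat(Y)$ to $F(z_1,z_2)=f(\sqrt{1+r^2}\,z_1)\in Rat(\kappa(Y))$, exactly as in the paper's proof. Your added care about the Laurent-polynomial structure of the pullback and the functional-calculus identity $(g\circ\kappa)(T)=g(\kappa(T))$ only makes explicit what the paper leaves implicit.
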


\begin{proof}
Let $Y$ be a von Neumann set for $T$. By spectral mapping theorem, $\sigma_T(\kappa(T))=\kappa(\sigma(T)) \subseteq \kappa(Y)$. Let $f \in Rat(\kappa(Y))$. Then $f \circ \kappa \in Rat(Y)$ and we have that
\[
\|f(\kappa(T))\| \leq \sup\{|f(\kappa(y))|: y \in Y\} \leq \sup\{|f(\kappa(y))|: \kappa(y) \in \kappa(Y)\} \leq \|f\|_{\infty, \kappa(Y)}.
\]
Conversely, assume that $\kappa(Y)$ is a von Neumann set for $\kappa(T)$. Let $\lambda \in \sigma(T)$. Again by spectral mapping theorem, $\kappa(\lambda) \in \sigma_T(\kappa(T)) \subseteq \kappa(Y)$. Since $\kappa$ is injective, we have that $\lambda \in Y$ and so, $\sigma(T) \subseteq Y$. It remains to show the von Neumann's inequality for $T$. Let $f \in Rat(Y)$. It is not difficult to see that $\sqrt{1+r^2}z_1 \in Y$ for all $(z_1, z_2) \in \kappa(Y)$. Define $F(z_1, z_2)=f(\sqrt{1+r^2}z_1)$ which is in $Rat(\kappa(Y))$.  Then
\begin{equation*}
\begin{split}
\|f(T)\| & =\|F(\kappa(T))\| \\
&=\sup\{|F(z_1, z_2)| : (z_1, z_2) \in \kappa(Y) \}\\
&\leq \sup\{|f(\sqrt{1+r^2}z_1)| : (z_1, z_2) \in \kappa(Y) \}\\
& \leq \sup\{|f(\sqrt{1+r^2}z_1)| : \sqrt{1+r^2}z_1 \in Y \} \quad [\text{as} \ \text{$\sqrt{1+r^2}z_1 \in Y$ for $(z_1, z_2) \in \kappa(Y)$}]\\
& \leq \|f\|_{\infty, Y}.\\
\end{split}
\end{equation*}
Therefore, $Y$ is a von Neumann set for $T$ which completes the proof.
\end{proof}

\begin{thm}\label{thm_min_vN}
$\kappa(\DC \setminus \{0\})$ is a minimal closed von Neumann set for each of the following classes: 
\[
\kappa(C_\alpha), \quad \kappa(C_\alpha^*), \quad \kappa(C_\alpha \cup C_\alpha^*), \quad \kappa(C_{1,r}) \quad \kappa(C_1).
\]
\end{thm}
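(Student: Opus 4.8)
The plan is to transfer the whole problem from $\C^2$ back to the punctured disk via the map $\kappa$ and then invoke the minimal spectral set results already in hand. The decisive tool is Theorem \ref{thm_vN_iff}: for an invertible $T$ and a set $Y \subseteq \C \setminus \{0\}$, the set $Y$ is a von Neumann set for $T$ if and only if $\kappa(Y)$ is a von Neumann set for $\kappa(T)$. Since Lemma \ref{lem_pi} says $\kappa$ is a closed injection, it is a homeomorphism of $\C \setminus \{0\}$ onto its image; hence $\kappa(\DC \setminus \{0\})$ is closed, and $Y \mapsto \kappa(Y)$ is an inclusion-preserving bijection between subsets of $\DC \setminus \{0\}$ and subsets of $\kappa(\DC \setminus \{0\})$. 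Consequently it suffices to prove that $\DC \setminus \{0\}$ is a minimal closed von Neumann set for each of $C_\alpha, C_\alpha^*, C_\alpha \cup C_\alpha^*, C_{1,r}$ and $C_1$; applying $\kappa$ then delivers the theorem. For the von Neumann set property itself, note that by Theorem \ref{thm_include} every member of the five classes is an invertible contraction, so Lemma \ref{lem_invc_vN} gives at once that $\DC \setminus \{0\}$ is a von Neumann set for it; combined with the closedness above, $\kappa(\DC \setminus \{0\})$ is a closed von Neumann set for every pair in $\kappa(C_\alpha), \dots, \kappa(C_1)$.

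For minimality, let $\mathcal C$ be any one of the five classes and let $Y_1$ be a proper closed subset of $\kappa(\DC \setminus \{0\})$. Writing $Y = \kappa^{-1}(Y_1)$, the homeomorphism property makes $Y$ a proper closed subset of $\DC \setminus \{0\}$. I would then set $X' = Y \cup \{0\}$. Because $Y$ is closed in $\C \setminus \{0\}$, its only possible $\C$-limit point outside $Y$ is $0$, so $X'$ is closed in $\C$ and, being bounded, compact; and $X' \subsetneq \DC$ since a nonzero point of $\DC \setminus Y$ lies outside $X'$. By the appropriate minimal spectral set theorem (Theorem \ref{prop_min_Ca}, \ref{cor_min_3} or \ref{thm:new-005}), $\DC$ is a minimal spectral set for $\mathcal C$, so there is $S \in \mathcal C$ for which the proper compact set $X'$ is \emph{not} a spectral set.

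The key step, and the one to handle with care, is to deduce that $Y$ then fails to be a von Neumann set for this $S$. Suppose to the contrary that it were. Then $\sigma(S) \subseteq Y \subseteq X'$, and since $S$ is invertible, every $f \in Rat(X')$ has $q$-part nonvanishing on $X' \supseteq Y$, so $f \in Rat(Y)$; moreover $\|f\|_{\infty, Y} \leq \|f\|_{\infty, X'}$ as $Y \subseteq X'$. Hence $\|f(S)\| \leq \|f\|_{\infty, Y} \leq \|f\|_{\infty, X'}$ for every $f \in Rat(X')$, which says exactly that $X'$ is a spectral set for $S$, a contradiction. Therefore $Y$ is not a von Neumann set for $S$, and by Theorem \ref{thm_vN_iff} the set $Y_1 = \kappa(Y)$ is not a von Neumann set for $\kappa(S) \in \kappa(\mathcal C)$. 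This establishes minimality uniformly for all five classes.

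The main obstacle is precisely this interplay between a von Neumann set on the punctured disk and a spectral set on the full disk: I expect the heart of the argument to be the observation that adjoining the single point $0$ to $Y$ neither enlarges the rational algebra that must be tested (invertibility keeps $0$ out of $\sigma(S)$, so $Rat(X') \subseteq Rat(Y)$) nor lowers the supremum norm, so that a von Neumann set on $\DC \setminus \{0\}$ automatically upgrades to a spectral set on $\DC$. Once this bridge between the two notions is made precise, the minimal spectral set theorems of Section \ref{sec_vNset} transfer verbatim, and the homeomorphism $\kappa$ carries everything to the level of the induced operator pairs.
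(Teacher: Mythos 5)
Your proposal is correct, and the minimality half takes a genuinely different and shorter route than the paper. The paper argues pointwise: given a proper closed subset $C$ of $\kappa(\DC\setminus\{0\})$ missing a point $\alpha_0=\kappa(z_0)$, it first rules out $|z_0|\ge r$ using the scalar operators $z_0I$, then for $|z_0|<r$ excises a product of balls around $\alpha_0$, makes a careful choice of a radius $\delta$ tying the two coordinates of $\kappa$ together, projects onto the first coordinate to produce a compact set $Y_0\subsetneq a\overline{\D}$, and shows $Y_0$ would be a spectral set for the first component of $\kappa(M_z^t)$ (resp.\ $\kappa((M_z^t)^*)$), contradicting the minimality of $\overline{\D}$ for that specific witness operator. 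You instead pull the proper closed subset back through the homeomorphism $\kappa$ to a proper closed $Y\subseteq\overline{\D}\setminus\{0\}$, note that $X'=Y\cup\{0\}$ is a proper \emph{compact} subset of $\overline{\D}$, invoke Theorems \ref{thm:new-005}, \ref{prop_min_Ca} and \ref{cor_min_3} to get $S\in\mathcal C$ for which $X'$ is not a spectral set, and observe that adjoining the single point $0$ neither shrinks $Rat$ (any $f\in Rat(X')$ is singularity-free on $Y$) nor decreases the sup norm, so a von Neumann set $Y$ for the invertible $S$ would upgrade to $X'$ being a spectral set --- a contradiction --- after which Theorem \ref{thm_vN_iff} transfers the failure to $\kappa(Y)$. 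This bridge is sound (both the spectrum inclusion and the inequality are verified, and the case $Y=\emptyset$ is vacuous since a von Neumann set must contain the nonempty spectrum), it treats all five classes uniformly, and it avoids the paper's coordinate estimates entirely; what it gives up is the explicit identification of the witness operator $M_z^t$, which the paper's construction makes visible.
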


\begin{proof}
Since $\DC \setminus \{0\}$ is closed in $\C \setminus \{0\}$, it follows from Lemma \ref{lem_pi} that $\kappa(\DC \setminus \{0\})$ is closed in $\C^2$. Let $\mathcal{C}$ be any one of the classes $C_\alpha$, $C_\alpha^*$, $C_\alpha \cup C_\alpha^*, C_{1,r}, C_1$ and let $X_*=\kappa(\DC \setminus \{0\})$. Let $T \in \mathcal{C}$. We have by Theorem \ref{thm_include} that $T$ is an invertible contraction. By Lemma \ref{lem_invc_vN}, $\DC \setminus \{0\}$ is a von Neumann set for $T$. Finally, it follows from Theorem \ref{thm_vN_iff} that $X_*$ is a von Neumann set for $\kappa(T)$. For the minimality, we show that no proper closed subset of $X_*$ is a von Neumann set for $\kappa(\mathcal{C})$. Let $C \subsetneq  X_*$ be a closed von Neumann set for $\kappa(\mathcal{C})$. Then there exists $\alpha_0=(\alpha_{01}, \alpha_{02}) \in X_*$ such that $\alpha_0 \notin C$. Consequently, there exists a unique $z_0 \in \DC \setminus\{0\}$ such that
\[
\alpha_0=(\alpha_{01}, \alpha_{02})=\kappa(z_0)=\left(\frac{z_0}{\sqrt{1+r^2}}, \frac{rz_0^{-1}}{\sqrt{1+r^2}} \right).
\]
If $|z_0| \geq r$, then $z_0 \in \CA_r$. Then the operator $T_0=z_0I$ is in $\mathcal{C}$ and $\sigma(T_0)=\{z_0\}$. By spectral mapping theorem, $\sigma_T(\kappa(T_0))=\{\alpha_0\}$ and so, $\alpha_0 \in C$, which gives a contradiction. Hence, $|z_0|<r$. Then 
\[
|\alpha_{01}|=\frac{|z_0|}{\sqrt{1+r^2}} <\frac{r}{\sqrt{1+r^2}}.
\]
Since $C$ is closed and $\alpha_0 \notin C$, there exists $\epsilon>0$ sufficiently small such that 
\[
0< \epsilon <|\alpha_{01}|, \quad 0<\epsilon<\frac{r}{\sqrt{1+r^2}}-|\alpha_{01}| \quad \text{and} \quad C \subseteq X_0=X_* \setminus \left(B(\alpha_{01}, \epsilon) \times B(\alpha_{02}, \epsilon) \right),
\]
where $B(\alpha_{0j}, \epsilon)=\{z: |z-\alpha_{0j}|<\epsilon\}$  for $j=1,2$.  Evidently, $X_0$ is also a closed von Neumann set for $\kappa(\mathcal{C})$. Following Example \ref{eg_5}, we have that $M_z^t$ on $H^2(\Omega_r, \C)$ is in $C_\alpha$. We denote by $S$ the operator $M_z^t$, when we assume $\mathcal C$ to be any of the classes $C_{\alpha}, \ C_{\alpha}\cup C_{\alpha}^*, \ C_{1,r}, \ C_1$ and we also use the same notation $S$ for $(M_z^t)^*$, when considering $\mathcal C$ to be the class $C_{\alpha}^*$. We consider the scalars given by
\[
a=\frac{1}{\sqrt{1+r^2}} \quad \text{and} \quad  \delta=\min\left\{\epsilon, \epsilon \left(\frac{1+r^2}{r}\right)|\alpha_{01}|(|\alpha_{01}|-\epsilon)\right\}.
\]
Let $(S_1, S_2)=\kappa(S)$. Then $S_1=aS$ and $S_2=arS^{-1}$. Following the proof of Theorems \ref{prop_min_Ca} \& \ref{cor_min_3}, we have that $\DC$ is a minimal spectral set for $S$ , $\sigma(S) =\CA_r$ and $\|S\|=1$. Thus, $a\DC$ is a minimal spectral set for $S_1, \sigma(S_1)=\{az: z \in \CA_r\}$ and $\|S_1\|=a$. Let us define
\[
Y_0=\{z \in \C: |z| \leq a, |z-\alpha_{01}| \geq \delta \},
\]
which is a compact set. Let $w=az \in \sigma(S_1)$ for some $z \in \CA_r$. Then, $|w| \leq a$ and
\[
 |w-\alpha_{01}|=|az-\alpha_{01}| \geq |a||z|-|\alpha_{01}| \geq r|a|-|\alpha_{01}|=\frac{r}{\sqrt{1+r^2}} -|\alpha_{01}| \geq \epsilon \geq \delta
\]
and so, $w \in Y_0$. Thus, $\sigma(S_1) \subseteq Y_0$. We show that if $(z_1, z_2) \in X_0$, then $z_1 \in Y_0$. Take $(z_1, z_2) \in X_0$ which is a subset of $X_*$. One can choose $z \in \DC \setminus \{0\}$ such that $(z_1, z_2)=(az, arz^{-1})$ and at least one of the following holds: 
\[
|z_1-\alpha_{01}| \geq \epsilon \quad \text{or} \quad |z_1-\alpha_{01}| < \epsilon \leq  |z_2-\alpha_{02}|.
\]
Note that $|z_1| \leq a$. If $|z_1-\alpha_{01}| \geq \epsilon$, then $z_1 \in Y_0$ since $\delta \leq \epsilon$. Now, let $|z_1-\alpha_{01}| < \epsilon \leq  |z_2-\alpha_{02}|$. Then $|\alpha_{01}|-|z_1| \leq |z_1-\alpha_{01}| <\epsilon$ and 
\[
\epsilon \leq |z_2-\alpha_{02}|=\left| \frac{rz_1^{-1}}{1+r^2}- \frac{r\alpha_{01}^{-1}}{1+r^2} \right|=\left(\frac{r}{1+r^2}\right)|z_1^{-1}-\alpha_{01}^{-1}|=\left(\frac{r}{1+r^2}\right)\frac{|z_1-\alpha_{01}|}{|z_1\alpha_{01}|}.
\]
Consequently, we have that
\begin{equation*}
\begin{split}
|\alpha_{01}-z_1|
=\frac{|\alpha_{01}-z_1|}{|\alpha_{01}||z_1|}|\alpha_{01}||z_1|
 \geq \epsilon\left(\frac{1+r^2}{r}\right)|\alpha_{01}|(|\alpha_{01}|-\epsilon)
\geq \delta 
\end{split}
\end{equation*}
and thus, $z_1 \in Y_0$. Let $f \in Rat(Y_0)$. Then $g(z_1, z_2)=f(z_1)$ is in $Rat(X_0)$. Since $X_0$ is a von Neumann set for $\kappa(S)=(S_1, S_2)$, we have that
\[
\|f(S_1)\|=\|g(S_1, S_2)\| \leq \sup\{|g(z_1, z_2)| : (z_1, z_2) \in X_0  \} \leq \{ |f(z_1)| : z_1 \in Y_0\} =\|f\|_{\infty, Y_0}.
\] 
Hence, $Y_0 \subsetneq a\DC$ is a spectral set for $S_1$ which is a contradiction. Thus $X_*=\kappa(\DC \setminus \{0\})$ is a minimal closed von Neumann set for $\kappa(\mathcal{C})$ and the proof is now complete. 
\end{proof}

We conclude this section by providing a minimal closed von Neumann set for $\kappa(C_\beta)$.

\begin{thm}
$Z(q)$ is a minimal closed von Neumann set for $\kappa(C_\beta)$, where $q(z_1, z_2)=z_1z_2-\frac{r}{1+r^2}.$
\end{thm}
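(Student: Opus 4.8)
The plan is to mirror the two-step template of Theorem \ref{thm_min_vN}, transporting the whole question back to the punctured plane via Theorem \ref{thm_vN_iff}. First I would record the identification $Z(q)=\kappa(\C\setminus\{0\})$: by the computation in Lemma \ref{lem_pi}, every $\kappa(z)$ has its two components multiplying to $\tfrac{r}{1+r^2}$, so $\kappa(\C\setminus\{0\})\subseteq Z(q)$; conversely any $(w_1,w_2)\in Z(q)$ satisfies $w_1w_2=\tfrac{r}{1+r^2}\neq 0$, whence $w_1\neq 0$ and $(w_1,w_2)=\kappa(w_1\sqrt{1+r^2})$. Thus $\kappa$ is a bijection of $\C\setminus\{0\}$ onto $Z(q)$, and since $\C\setminus\{0\}$ is closed in itself while $\kappa$ is a closed map, $Z(q)$ is closed in $\C^2$.

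To see that $Z(q)$ is a von Neumann set for every member of $\kappa(C_\beta)$, I would apply Theorem \ref{thm_vN_iff} with $Y=\C\setminus\{0\}$: for invertible $T$, the set $Z(q)=\kappa(\C\setminus\{0\})$ is a von Neumann set for $\kappa(T)$ precisely when $\C\setminus\{0\}$ is a von Neumann set for $T$. So it is enough to show that $\C\setminus\{0\}$ is a von Neumann set for every invertible $T$. The spectral inclusion $\sigma(T)\subseteq\C\setminus\{0\}$ is automatic. The key observation is that $Rat(\C\setminus\{0\})$ is exactly the algebra of Laurent polynomials: if $f=p/q$ has $q$ with no zeros off the origin, then $q(z)=cz^m$ and $f=\sum a_k z^k$ is a finite Laurent sum. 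A nonconstant Laurent polynomial is unbounded on $\C\setminus\{0\}$ (it blows up as $|z|\to\infty$ or as $z\to 0$), so $\|f\|_{\infty,\C\setminus\{0\}}=\infty$ and the required inequality $\|f(T)\|\leq\|f\|_{\infty,\C\setminus\{0\}}$ holds vacuously; for constant $f$ it is trivial. Hence $\C\setminus\{0\}$, and therefore $Z(q)$, is a von Neumann set for the respective class.

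For minimality, I would argue by the scalar-operator trick already used in Theorem \ref{thm_min_vN}. Suppose $C\subsetneq Z(q)$ is a closed von Neumann set for $\kappa(C_\beta)$ and choose $\alpha_0\in Z(q)\setminus C$. Writing $\alpha_0=\kappa(z_0)$ with the unique $z_0\in\C\setminus\{0\}$, consider $T_0=z_0I\in C_\beta$. Then $\kappa(T_0)=\bigl(\tfrac{z_0}{\sqrt{1+r^2}}I,\tfrac{rz_0^{-1}}{\sqrt{1+r^2}}I\bigr)$ has Taylor joint spectrum $\{\alpha_0\}$, so $\sigma_T(\kappa(T_0))\not\subseteq C$; by the definition of a von Neumann set, $C$ fails to be one for $\kappa(T_0)$, a contradiction. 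Therefore $Z(q)$ is a minimal closed von Neumann set for $\kappa(C_\beta)$.

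The point worth flagging---and what makes this the softest of the minimality results---is that because $C_\beta$ carries no norm bound, the von Neumann inequality on $\C\setminus\{0\}$ is entirely vacuous (the supremum side is infinite for every nonconstant rational function with poles confined to the origin). All the content therefore sits in the spectral-containment clause, so, unlike Theorems \ref{prop_min_Ca}--\ref{thm_min_vN}, no shift computations or analytic estimates enter; the only genuine steps are the clean description of $Rat(\C\setminus\{0\})$ as Laurent polynomials and the choice of a scalar operator whose induced joint spectrum lands exactly at each omitted point.
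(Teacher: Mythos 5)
Your proof is correct and takes essentially the same route as the paper's: both directions rest on Theorem \ref{thm_vN_iff} together with the scalar operators $z_0I$ for minimality. The only (immaterial) difference is that you verify the von Neumann property by observing that the inequality on $\C\setminus\{0\}$ is vacuous for nonconstant Laurent polynomials, whereas the paper transports the von Neumann set $\|T\|\,\overline{\mathbb D}\setminus\{0\}$ from Lemma \ref{lem_invc_vN} and then enlarges it to $Z(q)$.
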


\begin{proof}
Let $T \in C_\beta$. Following the proof of Lemma \ref{lem_invc_vN}, we have that 
\[
 \|T\|\DC \setminus \{0\} = \{z\in \C : 0< |z| \leq \|T\|\}
 \]
is a von Neumann set for $T$. By Theorem \ref{thm_vN_iff}, $\kappa(\|T\|\DC \setminus \{0\})$ is  a von Neumann set for $\kappa(T)$, implying that $Z(q)$ is a von Neumann set for $\kappa(T)$ as $\kappa(\|T\|\DC \setminus \{0\}) \subset Z(q)$. Thus $Z(q)$ is a closed von Neumann set for operators in $\kappa(C_\beta)$. To see the minimality, let $C$ be a von Neumann set for all operators in $\kappa(C_\beta)$. Let $\alpha \in Z(q)$. One can find $z \in \C \setminus \{0\}$ such that $\alpha=\kappa(z)$. Define $T=zI$, where $I$ is an identity operator on a Hilbert space. Then $T \in C_\beta$ and by spectral mapping principle, $\sigma_T(\kappa(T))=\{\alpha\}$. Thus, $\alpha \in C$ and so, $Z(q) \subseteq C$. This shows that no proper subset of $Z(q)$ can be a von Neumann set for all operators in $\kappa(C_\beta)$, which completes the proof.
\end{proof}

		\section{The quantum annulus}\label{sec_quant}	
		
		\vspace{0.2cm}
		
		\noindent McCullough and Pascoe \cite{Pas-McCull} considered a different annulus $A_r=\{z \in \C : r<|z|<r^{-1}\}$ for $r \in (0,1)$ and introduced the following classes of operators related to $A_r$:
		\begin{enumerate}
			\item $SA_r=\{T: \overline{A}_r  \ \text{is a spectral set for} \ T \}$;
			\item $PA_r=\{T\,:\, T \text{ is an invertible operator and } \, r^2+r^{-2}-T^*T-T^{-*}T^{-1} \geq 0\}$;
			\item $QA_r = \{T\,:\, T \text{ is an invertible operator and } \, \|T\|, \, \|T^{-1}\| \leq r^{-1}  \}$.
		\end{enumerate}
		We refer to $QA_r$ as the \textit{quantum annulus} and the operators in $SA_r$ are referred to as \textit{$A_r$-contractions}. The following result gives a chain of these classes. 
		
		\begin{thm}[\cite{Pas-McCull}, Section 1]
			$SA_r \subsetneq PA_r \subsetneq QA_r$.
		\end{thm}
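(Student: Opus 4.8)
The plan is to transport the whole chain to the setting of Sections~\ref{sec_var}--\ref{sec_vNset} by means of the biholomorphism $\varphi\colon A_r\to\A_{r^2}$, $\varphi(z)=rz$ recalled in the introduction, and then to read off the result from the already-established chain $\mathcal A_{r^2}\subsetneq C_\alpha\subsetneq C_{1,r^2}$, namely Theorem~\ref{thm_include} applied with the parameter $r^2$ in place of $r$. The mechanism is the operator scaling $\psi\colon T\mapsto rT$, a bijection of the invertible operators onto themselves with inverse $S\mapsto r^{-1}S$. First I would show that $\psi$ carries $SA_r$, $PA_r$ and $QA_r$ onto the $\A_{r^2}$-contractions, the class $C_\alpha$ (with parameter $r^2$) and the class $C_{1,r^2}$, respectively.

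The three correspondences are established as follows. For $SA_r$: if $\overline{A}_r$ is a spectral set for $T$, then for each $f\in Rat(\CA_{r^2})$ we have $f\circ\varphi\in Rat(\overline{A}_r)$ and $f(rT)=(f\circ\varphi)(T)$, so $\|f(rT)\|\le\|f\circ\varphi\|_{\infty,\overline{A}_r}=\|f\|_{\infty,\CA_{r^2}}$, while the spectral mapping theorem gives $\sigma(rT)=r\,\sigma(T)\subseteq\varphi(\overline{A}_r)=\CA_{r^2}$; thus $rT$ is an $\A_{r^2}$-contraction, and the argument reverses through $\varphi^{-1}$. For $PA_r$: writing $S=rT$, so that $T^*T=r^{-2}S^*S$ and $T^{-*}T^{-1}=r^{2}S^{-*}S^{-1}$, the inequality $r^2+r^{-2}-T^*T-T^{-*}T^{-1}\ge0$ becomes, after multiplication by $r^2$, $(1+r^{4})I-S^*S-r^{4}S^{-*}S^{-1}\ge0$, which by the equivalent form of the $C_\alpha$-condition in Proposition~\ref{prop_C_sph} (with $r$ replaced by $r^2$) says exactly that $S\in C_\alpha$. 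For $QA_r$: with $S=rT$ one has $\|S\|=r\|T\|$ and $\|r^2S^{-1}\|=r\|T^{-1}\|$, so $\|T\|,\|T^{-1}\|\le r^{-1}$ is equivalent to $\|S\|\le1$ and $\|r^2S^{-1}\|\le1$, i.e.\ $S\in C_{1,r^2}$. Each statement is an equivalence, so $\psi$ restricts to the three claimed bijections.

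With this dictionary the inclusions are immediate: for $T\in SA_r$ we have $\psi(T)\in\mathcal A_{r^2}\subseteq C_\alpha$ by Theorem~\ref{thm_include} (parameter $r^2$), whence $T\in PA_r$; and $\psi(T)\in C_\alpha\subseteq C_{1,r^2}$ gives $PA_r\subseteq QA_r$. For the strict inclusions, since $\psi$ is a bijection it sends a witness in $C_\alpha\setminus\mathcal A_{r^2}$ back to a member of $PA_r\setminus SA_r$, and a witness in $C_{1,r^2}\setminus C_\alpha$ back to a member of $QA_r\setminus PA_r$; the existence of these witnesses is precisely the strictness recorded in Theorem~\ref{thm_include}. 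Hence $SA_r\subsetneq PA_r\subsetneq QA_r$.

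The only point requiring care is the bookkeeping of the parameter: since $A_r$ transforms to $\A_{r^2}$, every result imported from the earlier sections must be invoked with parameter $r^2$, and in the $PA_r$ step one must track the powers of $r$ correctly---the factor $r^2$ one clears is exactly what turns the $PA_r$ inequality into the parameter-$r^2$ form of the $C_\alpha$ inequality. I do not expect any analytic obstruction beyond this, because $\varphi$ extends to a biholomorphism between neighbourhoods of $\overline{A}_r$ and $\CA_{r^2}$ and therefore respects both $Rat(\cdot)$ and the supremum norm; everything else reduces to the bijectivity of a dilation by the scalar $r$.
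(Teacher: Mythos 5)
Your argument is correct, but it is worth noting that the paper does not prove this statement at all: it is imported verbatim by citation from \cite{Pas-McCull}, Section 1. What you have done instead is give a self-contained derivation inside the paper's own framework, and your derivation is in fact exactly the content of the paper's Lemma \ref{lem_quantI} (which the paper states without proof, remarking only that "one can easily establish" it via the biholomorphism $\varphi(z)=rz$) combined with Theorem \ref{thm_include} at parameter $r^2$. Your three correspondences check out: the $SA_r$ case is the standard transport of spectral sets under a biholomorphism of closed annuli together with the spectral mapping theorem; the $PA_r$ case correctly clears the factor $r^2$ to turn $r^2+r^{-2}-T^*T-T^{-*}T^{-1}\geq 0$ into $(1+r^4)I-S^*S-r^4S^{-*}S^{-1}\geq 0$ for $S=rT$, which by the computation in Proposition \ref{prop_C_sph} (with parameter $r^2$) is equivalent to $\alpha_{r^2}(S^*,S)\geq 0$; and the $QA_r$ case is immediate norm bookkeeping. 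Since each correspondence is an equivalence, strictness transports back along $S\mapsto r^{-1}S$, so the witnesses for $\mathcal A_{r^2}\subsetneq C_\alpha(r^2)$ and $C_\alpha(r^2)\subsetneq C_{1,r^2}$ produce witnesses for $SA_r\subsetneq PA_r$ and $PA_r\subsetneq QA_r$, and there is no circularity because Theorem \ref{thm_include} rests on the Bello--Yakubovich results for $\A_r$, not on \cite{Pas-McCull}. The one caveat is that you have not eliminated reliance on outside sources, only exchanged one citation for another: the strictness in Theorem \ref{thm_include} itself comes from Theorem \ref{thm_Ar_Ca} and Remark 3.5 of \cite{Bello} (though Example \ref{eg_5} does supply an explicit element of $C_\alpha\setminus\mathcal A_r$ within the paper). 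What your route buys is a uniform mechanism showing that every inclusion and every strictness statement for the $\A_{r^2}$ chain transfers automatically to the $A_r$ chain, which is precisely the philosophy the paper adopts, without proof, at the start of Section \ref{sec_quant}.
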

		
		The classes $SA_r, PA_r$ and $QA_r$ are analogues of $\{T: T \ \text{is an $A_r$-contraction}\},\, C_\alpha$ and $C_{1, r}$ respectively for the annulus $A_r$. It turns out that these classes are comparable with their respective analogs. To see this, note that the map 
		given by 
		\[
		\varphi: A_r \to \A_{r^2}, \quad \varphi(z)=rz
		\]
		defines a biholomorphism with $\varphi^{-1}(z)=r^{-1}z$. To explain the results more clearly, we use the following notations for the rest of this section.
		\[
		\alpha_r(T^*, T)=-T^{*2}T^2+(1+r^2)T^*T-r^2I \quad \text{and} \quad C_\alpha(r)=\{T: T \ \text{is invertible and} \ \alpha_r(T^*, T) \geq 0\},
		\]
		which was denoted by $\alpha(T^*, T)$ and $C_\alpha$ respectively in the previous sections. One can easily establish the following result using the biholomorphism $\varphi$ from $A_r$ to $\A_{r^2}$.
		
		\begin{lem}\label{lem_quantI}
			Let $T$ be an invertible operator. Then 
			\begin{enumerate}
				\item $T$ is an $\A_r$-contraction if and only if $r^{-1\slash 2} T \in SA_{\sqrt{r}}$. Also, $T \in SA_r$ if and only if $rT$ is an $\A_{r^2}$-contraction.
				\item $T \in C_\alpha(r)$ if and only if $r^{-1\slash 2} T \in PA_{\sqrt{r}}$. Also, $T \in PA_r$ if and only if $rT \in C_\alpha(r^2)$.
				\item $T \in C_{1, r}$ if and only if $r^{-1\slash 2} T \in QA_{\sqrt{r}}$. Also, $T \in QA_r$ if and only if $rT \in C_{1, r^2}$.
			\end{enumerate}
			
		\end{lem}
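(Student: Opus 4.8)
The plan is to push everything through the scaling biholomorphism $\varphi(z)=rz$ from $A_r$ onto $\A_{r^2}$, together with its companion $\eta(z)=\sqrt{r}\,z$ from $A_{\sqrt r}$ onto $\A_r$ (note $\eta(A_{\sqrt r})=\{\,r<|w|<1\,\}=\A_r$). The six assertions fall into two families. The two statements in (1) concern spectral sets and will follow from a single elementary scaling principle; the four statements in (2) and (3) concern operator inequalities and norm bounds and will follow by direct substitution.

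First I would record the scaling principle: for a constant $c>0$ and a compact set $\Omega\subseteq\C$, the closure $\overline{\Omega}$ is a spectral set for an operator $S$ if and only if $c\overline{\Omega}$ is a spectral set for $cS$. Indeed $\sigma(cS)=c\,\sigma(S)$ by the spectral mapping theorem, so the spectral containment transfers; and for $f\in Rat(c\overline{\Omega})$ the function $g(z)=f(cz)$ lies in $Rat(\overline{\Omega})$ with $g(S)=f(cS)$ and $\|g\|_{\infty,\overline{\Omega}}=\|f\|_{\infty,c\overline{\Omega}}$, since composition with the invertible affine map $z\mapsto cz$ is a sup-norm isometry between the two rational algebras. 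Applying this with $\Omega=A_{\sqrt r}$ and $c=\sqrt r$ gives ``$\sqrt r\,S$ is an $\A_r$-contraction $\iff S\in SA_{\sqrt r}$'', i.e. the first half of (1) upon setting $S=r^{-1/2}T$; applying it with $\Omega=A_r$ and $c=r$ gives the second half, ``$T\in SA_r\iff rT$ is an $\A_{r^2}$-contraction''.

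For (2) I would conjugate the defining inequality by $T^{-1}$, exactly as in Proposition \ref{prop_C_sph}. Since $T$ is invertible, $\alpha_r(T^*,T)\ge0$ if and only if $T^{-*}\alpha_r(T^*,T)T^{-1}\ge0$, and using $T^{-*}T^*=TT^{-1}=I$ one computes
\begin{equation*}
T^{-*}\alpha_r(T^*,T)T^{-1}=(1+r^2)I-T^*T-r^2T^{-*}T^{-1}.
\end{equation*}
Substituting $T=\sqrt r\,S$ turns the right-hand side (after clearing a positive scalar) into $r+r^{-1}-S^*S-S^{-*}S^{-1}$, which is the defining quantity of $PA_{\sqrt r}$; substituting $S=rT$ into the analogous identity for $\alpha_{r^2}(S^*,S)$ yields $r^2+r^{-2}-T^*T-T^{-*}T^{-1}$, the defining quantity of $PA_r$. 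For (3) the statements reduce to norm bookkeeping: with $S=r^{-1/2}T$ one has $\|S\|=r^{-1/2}\|T\|$ and $\|S^{-1}\|=\sqrt r\,\|T^{-1}\|$, so $\|S\|,\|S^{-1}\|\le r^{-1/2}$ is equivalent to $\|T\|\le1$ and $\|rT^{-1}\|\le1$, i.e. $T\in C_{1,r}$; the companion statement is the same computation with $S=rT$.

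The computations are entirely routine, so I expect no genuine obstacle. The only point demanding care is the spectral-set transport in (1): one must verify that composition with the affine scaling is an isometric isomorphism of the relevant rational algebras and that it respects the spectrum containment, which is precisely what the scaling principle above isolates. The principal practical risk is mismatching the three scaling constants $r$, $\sqrt r$ and $r^2$, so I would keep the two biholomorphisms $\varphi$ and $\eta$ and their induced operator substitutions carefully separated throughout.
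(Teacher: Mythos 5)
Your proof is correct, and it follows exactly the route the paper indicates: the paper omits the proof of this lemma, remarking only that it follows from the scaling biholomorphism $\varphi(z)=rz$, and your argument supplies precisely those details (the sup-norm-isometric transport of rational algebras under affine scaling for part (1), the conjugation $T^{-*}\alpha_r(T^*,T)T^{-1}=(1+r^2)I-T^*T-r^2T^{-*}T^{-1}$ for part (2), and the norm bookkeeping for part (3)). All the scaling constants check out, so nothing further is needed.
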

		
		Consequently, all the results from the previous sections for $\A_r$-contractions, $C_\alpha(r)$ and $C_{1, r}$ have analogues for $SA_r, PA_r$ and $QA_r$ respectively. We also define an analogue of the map $\kappa$ which is given by
		\[
		\kappa_0: \C \setminus \{0\} \to \C^2, \quad \kappa_0(z)=\left(\frac{z}{\sqrt{r^{-2}+r^2}}, \frac{z^{-1}}{\sqrt{r^{-2}+r^2}} \right).
		\]
		
		Note that 
		$
		\kappa_0(\overline{A}_r)=Z(q_0) \cap \BC$, where $q_0(z_1, z_2)=z_1z_2-\frac{1}{r^2+r^{-2}}$. Below, we have an analogue of Theorem \ref{thm_A_con_BallII} in this setting.
		
		\begin{thm}
			An invertible operator $T$ is in $SA_r$ if and only if $Z(q_0) \cap \BC$ is a complete spectral set for $\kappa_0(T)$.
		\end{thm}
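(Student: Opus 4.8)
**

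The plan is to transport the already-established Theorem~\ref{thm_A_con_BallII} across the biholomorphism $\varphi$ using the dictionary provided by Lemma~\ref{lem_quantI}. The key observation is that the two setups are linked by scaling: the annulus $A_r$ maps to $\A_{r^2}$ via $\varphi(z)=rz$, and correspondingly the map $\kappa_0$ (built from the parameter $r$ for $A_r$) should factor through the map $\kappa$ (built from the parameter $r^2$ for $\A_{r^2}$). So the first thing I would verify is the compatibility of the two holomorphic embeddings, namely that $\kappa_0(z)=\kappa(rz)$ where on the right $\kappa$ is the map associated with $\A_{r^2}$, i.e.\ with its second component having numerator $r^2(rz)^{-1}$ and normalizing constant $\sqrt{1+r^4}=\sqrt{r^2(r^{-2}+r^2)}$. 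A direct check of both components confirms $\kappa_0(z)$ and $\kappa(rz)$ agree up to the identical normalization, and consequently $\kappa_0(\overline{A}_r)=\kappa(\overline{\A}_{r^2})=Z(q_0)\cap\overline{\B}_2$, matching the displayed identity with $q_0(z_1,z_2)=z_1z_2-\frac{1}{r^2+r^{-2}}$.

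With this in hand the proof becomes a short translation. First I would apply Lemma~\ref{lem_quantI}(1): an invertible operator $T$ lies in $SA_r$ if and only if $S:=rT$ is an $\A_{r^2}$-contraction. Next I would apply Theorem~\ref{thm_A_con_BallII} (with the parameter $r$ there replaced by $r^2$) to the operator $S$, obtaining that $S$ is an $\A_{r^2}$-contraction if and only if $Z(q)\cap\overline{\B}_2$ is a complete spectral set for $\kappa(S)$, where $q(z_1,z_2)=z_1z_2-\frac{r^2}{1+r^4}$. Finally I would identify $\kappa(S)=\kappa(rT)=\kappa_0(T)$ using the operator version of the scaling identity above, and observe that the polynomial $q$ associated with $\A_{r^2}$ is exactly $q_0$ up to a nonzero scalar multiple (since $\frac{r^2}{1+r^4}=\frac{1}{r^{-2}+r^2}$), so that $Z(q)=Z(q_0)$ and hence $Z(q)\cap\overline{\B}_2=Z(q_0)\cap\overline{\B}_2$. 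Chaining these equivalences yields exactly the assertion that $T\in SA_r$ if and only if $Z(q_0)\cap\overline{\B}_2$ is a complete spectral set for $\kappa_0(T)$.

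The one point requiring genuine care—though it is bookkeeping rather than a deep obstacle—is the operator-level identity $\kappa(rT)=\kappa_0(T)$, which must hold as an equality of commuting operator pairs, not merely on spectra. The first components read $\frac{rT}{\sqrt{1+r^4}}$ versus $\frac{T}{\sqrt{r^{-2}+r^2}}$, and these coincide because $\sqrt{1+r^4}=r\sqrt{r^{-2}+r^2}$; the second components read $\frac{r^2(rT)^{-1}}{\sqrt{1+r^4}}=\frac{rT^{-1}}{\sqrt{1+r^4}}$ versus $\frac{T^{-1}}{\sqrt{r^{-2}+r^2}}$, which again match by the same normalization. I would also record that the spectral-containment side of the argument is automatic: by the spectral mapping theorem $\sigma_T(\kappa_0(T))=\kappa_0(\sigma(T))$ sits inside $Z(q_0)\cap\overline{\B}_2$ precisely when $\sigma(T)\subseteq\overline{A}_r$, which is built into membership in $SA_r$. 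Since each equivalence in the chain is an "if and only if," no separate converse argument is needed, and the proof is complete once the scaling identities are verified.
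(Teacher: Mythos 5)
Your proof is correct, and it takes a different (and more economical) route than the one the paper suggests. The paper does not write out a proof at all: it simply says that one can \emph{imitate} the proof of Theorem \ref{thm_A_con_BallII}, i.e.\ re-run the direct argument (Agler-type dilation, polynomial convexity of the variety, Oka--Weil, the description of the distinguished boundary) with $\overline{A}_r$ and $\kappa_0$ in place of $\overline{\A}_{r}$ and $\kappa$. You instead \emph{reduce} the statement to the already-proved Theorem \ref{thm_A_con_BallII} (applied with parameter $r^2$) via the scaling $\varphi(z)=rz$, using Lemma \ref{lem_quantI}(1) together with the identities $\kappa_0(T)=\kappa(rT)$, $\sqrt{1+r^4}=r\sqrt{r^{-2}+r^2}$, and $q_0=q$ (indeed $\frac{r^2}{1+r^4}=\frac{1}{r^2+r^{-2}}$, so the two polynomials coincide exactly, not merely up to a scalar). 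All of these identities check out, and the chain of equivalences is airtight; your insistence on verifying $\kappa(rT)=\kappa_0(T)$ as an equality of operator pairs, rather than merely spectrally, is exactly the right point of care, since complete spectral sets are a property of the operators and not of their spectra. This transport-of-structure argument is very much in the spirit of what the paper itself does for the neighbouring results (Lemma \ref{lem_quantI} and Proposition \ref{prop_quantII}), and what it buys over the paper's suggested route is that nothing needs to be re-derived: the whole proof is bookkeeping once Theorem \ref{thm_A_con_BallII} is available. The only mild caveat is that you lean on Lemma \ref{lem_quantI}(1), which the paper states without proof; if you wanted a fully self-contained argument you would add the one-line verification that $f\mapsto f\circ\varphi^{-1}$ is an isometric isomorphism of $Rat(\overline{\A}_{r^2})$ onto $Rat(\overline{A}_r)$ compatible with the functional calculus.
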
		
		
One can imitate the proof of Theorem \ref{thm_A_con_BallII} to frame a proof to the above theorem.	We conclude this section by finding minimal closed von Neumann sets for $\kappa_0(PA_r)$ and $\kappa_0(QA_r)$. Recall that for a class of invertible operators $\mathcal{C}$, $\kappa_0(\mathcal{C})=\{\kappa_0(T) : T \in \mathcal{C}\}$. 
		
		\begin{prop}\label{prop_quantII}
			$r^{-1}\DC$ is a minimal spectral for $PA_r$ and $QA_r$.
		\end{prop}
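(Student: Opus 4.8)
The plan is to transport to $PA_r$ and $QA_r$ the minimality results already proved for $C_\alpha(r^2)$ and $C_{1,r^2}$, using the scaling $T \mapsto rT$ supplied by Lemma \ref{lem_quantI}. The only auxiliary fact I need is the covariance of spectral sets under a dilation of the plane: for an invertible operator $T$ and a compact set $Y \subset \C$, the set $Y$ is a spectral set for $T$ if and only if $rY$ is a spectral set for $rT$. This follows at once from the spectral mapping theorem, which gives $\sigma(rT) = r\,\sigma(T) \subseteq rY$, together with the observation that composition with $\mu(z) = rz$ is an isometric bijection of $Rat(rY)$ (with the supremum norm over $rY$) onto $Rat(Y)$ (with the supremum norm over $Y$) satisfying $g(rT) = (g \circ \mu)(T)$. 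I would record this as a one-line remark before the main argument.

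The membership claim is then immediate. If $T \in QA_r$, then $\|rT\| \leq 1$, so $rT$ is a contraction and $\DC$ is a spectral set for $rT$; taking $Y = r^{-1}\DC$ in the covariance (so that $rY = \DC$), we conclude that $r^{-1}\DC$ is a spectral set for $T$. Since $PA_r \subseteq QA_r$ by the chain $SA_r \subsetneq PA_r \subsetneq QA_r$, the same conclusion holds for every $T \in PA_r$.

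For minimality I would argue by contraposition. Suppose, for contradiction, that some proper compact subset $X \subsetneq r^{-1}\DC$ is a spectral set for every operator in $PA_r$. Then $rX \subsetneq \DC$ is a proper compact subset, and I claim it is a spectral set for every $S \in C_\alpha(r^2)$. Indeed, writing $T = r^{-1}S$, Lemma \ref{lem_quantI}(2) gives $T \in PA_r$ because $rT = S \in C_\alpha(r^2)$; hence $X$ is a spectral set for $T$ by hypothesis, and the covariance remark upgrades this to $rX$ being a spectral set for $rT = S$. This contradicts the fact that $\DC$ is a \emph{minimal} spectral set for $C_\alpha(r^2)$, i.e.\ the analogue of Theorem \ref{prop_min_Ca} with $r$ replaced by $r^2$ (which is licit since $r^2 \in (0,1)$). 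The argument for $QA_r$ is identical, now invoking Lemma \ref{lem_quantI}(3) together with the $r^2$-analogue of Theorem \ref{thm:new-005} (minimality of $\DC$ for $C_{1,r^2}$).

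I do not expect a genuine obstacle here: no new operator-theoretic construction is required, and the whole argument is a transfer. The one place demanding care is the bookkeeping of the two scalings—the biholomorphism $\varphi(z) = rz$ from $A_r$ onto $\A_{r^2}$ at the level of domains and the operator scaling $T \mapsto rT$—and the verification that Theorems \ref{prop_min_Ca} and \ref{thm:new-005} apply verbatim with the parameter $r^2$ in place of $r$. Once the covariance remark is in place, both assertions follow by the same template.
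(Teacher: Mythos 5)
Your proposal is correct and follows essentially the same route as the paper: both proofs rest on the covariance of spectral sets under the scaling $z\mapsto rz$, $T\mapsto rT$, combined with Lemma \ref{lem_quantI} and the minimality of $\overline{\mathbb{D}}$ for $C_\alpha(r^2)$ and $C_{1,r^2}$ from Theorems \ref{prop_min_Ca} and \ref{thm:new-005}. The only cosmetic difference is that you verify membership for $QA_r$ directly from von Neumann's inequality and deduce the $PA_r$ case from the inclusion $PA_r\subsetneq QA_r$, whereas the paper phrases everything as a single equivalence through the transfer map; the content is identical.
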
	
		
		\begin{proof}
			Consider the biholomorphism $\Phi: r^{-1}\DC \to  \DC$ given by $\Phi(z)=rz$. It is evident that a compact set $X \subseteq \DC$ is a spectral set for an operator $A$ if and only if $\Phi^{-1}(X)=r^{-1}X \subseteq r^{-1}\DC$ is a spectral set for $r^{-1}A$. Consequently, we have by Lemma \ref{lem_quantI} that $X \subseteq r^{-1}\DC$ is a spectral set for $PA_r$ and $QA_r$ if and only if $\Phi(X)=rX$ is a spectral set for $C_\alpha(r^2)$ and $C_{1, r^2}$ respectively. It follows from Theorems \ref{thm:new-005} \& \ref{prop_min_Ca} that $\DC$ is a minimal spectral set for $C_\alpha(r^2)$ and $C_{1, r^2}$. Therefore, $r^{-1}\DC$ is a minimal spectral set for $PA_r$ and $QA_r$. This finishes the proof.
		\end{proof}

		\begin{thm}
			$\kappa_0(r^{-1}\DC\setminus\{0\})$ is a minimal closed von Neumann set for $\kappa_0(PA_r)$ and $\kappa_0(QA_r)$.
		\end{thm}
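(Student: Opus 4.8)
The plan is to deduce this theorem directly from Theorem \ref{thm_min_vN} by transporting everything through the biholomorphism $\varphi(z)=rz$ from $A_r$ onto $\A_{r^2}$, exactly in the spirit of Proposition \ref{prop_quantII}. To set this up, let $\widetilde\kappa$ denote the map $\kappa$ associated with the annulus $\A_{r^2}$, namely $\widetilde\kappa(z)=\bigl(z/\sqrt{1+r^4},\,r^2z^{-1}/\sqrt{1+r^4}\bigr)$. Using $\sqrt{r^{-2}+r^2}=\sqrt{1+r^4}/r$, a one-line computation gives the key identity $\kappa_0(z)=\widetilde\kappa(rz)=\widetilde\kappa(\varphi(z))$, and this lifts verbatim to operators, so that $\kappa_0(T)=\widetilde\kappa(rT)$ for every invertible $T$.

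First I would record the three identifications that drive the reduction. From $\varphi(r^{-1}\DC\setminus\{0\})=\DC\setminus\{0\}$ together with the identity above one gets the set equality $\kappa_0(r^{-1}\DC\setminus\{0\})=\widetilde\kappa(\DC\setminus\{0\})$; in particular this set is closed in $\C^2$, being the image of a closed set under the closed map of Lemma \ref{lem_pi} applied with parameter $r^2$. Next, Lemma \ref{lem_quantI}(2) says $T\in PA_r$ if and only if $rT\in C_\alpha(r^2)$, so $T\mapsto rT$ is a bijection of $PA_r$ onto $C_\alpha(r^2)$; combined with $\kappa_0(T)=\widetilde\kappa(rT)$ this yields the equality of operator classes
\[
\kappa_0(PA_r)=\{\widetilde\kappa(rT):T\in PA_r\}=\{\widetilde\kappa(S):S\in C_\alpha(r^2)\}=\widetilde\kappa\bigl(C_\alpha(r^2)\bigr).
\]
In the same way, Lemma \ref{lem_quantI}(3) gives $\kappa_0(QA_r)=\widetilde\kappa(C_{1,r^2})$.

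With these identifications the theorem is immediate. Theorem \ref{thm_min_vN}, read for the annulus $\A_{r^2}$ (its proof uses nothing about the parameter beyond its lying in $(0,1)$, and $r^2\in(0,1)$), asserts precisely that $\widetilde\kappa(\DC\setminus\{0\})$ is a minimal closed von Neumann set for $\widetilde\kappa(C_\alpha(r^2))$ and for $\widetilde\kappa(C_{1,r^2})$. Since the underlying set and each of the two classes coincide with $\kappa_0(r^{-1}\DC\setminus\{0\})$, $\kappa_0(PA_r)$, and $\kappa_0(QA_r)$ respectively, and the defining notion of closed von Neumann set is unchanged, both the von-Neumann-set property and its minimality transfer word for word.

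The only point requiring genuine care — and the part I would write out most carefully — is the parameter shift $\kappa_0=\widetilde\kappa\circ\varphi$ and the attendant check that invoking Theorem \ref{thm_min_vN} at parameter $r^2$ is legitimate; there is no substantive analytic obstacle, as all the hard work (the projection-to-the-first-coordinate minimality argument with the shift $M_z^t$) was already carried out there. Should a self-contained proof be preferred instead, one would imitate the proof of Theorem \ref{thm_min_vN} directly, first establishing $\kappa_0$-analogues of Theorem \ref{thm_vN_iff} and of Lemma \ref{lem_invc_vN} so that $r^{-1}\DC\setminus\{0\}$ is a von Neumann set for every member of $PA_r$ and $QA_r$ via Proposition \ref{prop_quantII}, and then running the same minimality argument with $M_z^t$ transported by $\varphi$; this is exactly the step I would avoid by the reduction above.
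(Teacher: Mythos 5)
Your proposal is correct, and it takes a genuinely cleaner route than the paper. The paper's proof is a hybrid: it first establishes directly that $\kappa_0(r^{-1}\DC\setminus\{0\})$ is a closed von Neumann set for each class (by re-running the change-of-variables argument of Theorem \ref{thm_vN_iff} for $\kappa_0$, using Proposition \ref{prop_quantII} and the argument of Lemma \ref{lem_invc_vN}), and then for minimality it exhibits the witness $r^{-1}M_z^t\in PA_r$ (with $M_z^t$ on $H^2(\Omega_{r^2},\C)$, via Example \ref{eg_5} and Lemma \ref{lem_quantI}) and asks the reader to ``follow the proof of Theorem \ref{thm_min_vN}.'' You instead isolate the exact identity $\kappa_0=\widetilde\kappa\circ\varphi$, i.e.\ $\kappa_0(z)=\widetilde\kappa(rz)$ where $\widetilde\kappa$ is the $\kappa$-map at parameter $r^2$ (the computation $\sqrt{r^{-2}+r^2}=\sqrt{1+r^4}/r$ checks out), and combine it with the bijections $T\mapsto rT$ of $PA_r$ onto $C_\alpha(r^2)$ and of $QA_r$ onto $C_{1,r^2}$ from Lemma \ref{lem_quantI}; this makes the underlying set and both operator classes \emph{literally identical} to those in Theorem \ref{thm_min_vN} at parameter $r^2$, so both the von Neumann property and minimality transfer with no further argument. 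What your approach buys is completeness and economy: the minimality half, which the paper only sketches, becomes automatic, and the only things to verify are the one-line identity and the legitimacy of invoking Theorem \ref{thm_min_vN} at $r^2\in(0,1)$ (which is immediate, since that theorem is stated for an arbitrary parameter in $(0,1)$). What the paper's more direct route buys is independence from the observation that $\kappa_0$ factors through $\kappa$, at the cost of repeating work. No gaps.
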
		
		
		\begin{proof}
			Let $T \in PA_r$ or $QA_r$. We have by Proposition \ref{prop_quantII} that $r^{-1}\DC$ is a spectral set for $T$. Following the proof of Lemma \ref{lem_invc_vN}, one can prove that $Y=r^{-1}\DC\setminus \{0\}$ is a von Neumann set for $T$. Evidently, $\kappa_0(Y)$ is a closed subset of $\C^2$. By spectral mapping theorem, $\sigma_T(\kappa_0(T))=\kappa_0(\sigma(T)) \subseteq \kappa_0(Y)$. Let $f \in Rat(\kappa_0(Y))$. Then $f \circ \kappa_0 \in Rat(Y)$ and we have that
			\[
			\|f(\kappa_0(T))\| \leq \sup\{|f(\kappa_0(y))|: y \in Y\} \leq \sup\{|f(\kappa_0(y))|: \kappa_0(y) \in \kappa_0(Y)\} \leq \|f\|_{\infty, \kappa_0(Y)}.
			\]
			Thus, $\kappa_0(Y)$ is a closed von Neumann set for $\kappa_0(PA_r)$ and $\kappa_0(QA_r)$. By Example \ref{eg_5} and following the proof of Theorem \ref{prop_min_Ca}, we have that $M_z^t$ on $H^2(\Omega_{r^2}, \C)$ is in $C_\alpha(r^2)$ and has $\DC$ as a minimal spectral set. It follows from Lemma \ref{lem_quantI} that $T=r^{-1}M_z^t \in PA_r$ and $r^{-1} \DC$ is a minimal spectral for $T$. One can follow the proof of Theorem \ref{thm_min_vN} to show that $\kappa_0(Y)$ is a minimal closed von Neumann set for $\kappa_0(T)$ and the desired conclusion follows.
		\end{proof}	
		
		\noindent \textbf{Funding.} The first named author was supported in part by the `Core Research Grant' with Award No. CRG/2023/005223 of Anusandhan National Research Foundation (ANRF), Govt. of India. The second named author was supported by the Prime Minister's Research Fellowship with Award No. PMRF ID 1300140 of Govt. of India.
		
		\medskip
		
		\noindent \textbf{Declarations.} No data was analysed or used during the course of the paper. All authors of this paper contributed equally to article. Also, there is no competing interest.

\end{document}